\documentclass[12pt]{amsart}

\usepackage{hyperref}
\usepackage{amsmath}
\usepackage{amssymb}
\usepackage{mathrsfs}
\usepackage{amsthm}
\usepackage{svninfo}

\usepackage[english]{babel}

\makeatletter

\sloppy

\ifx\Presentation\undefined
\oddsidemargin=.12in \evensidemargin=.12in
\textwidth=6.25in \textheight=8.0in
\topmargin=.5in
\fi

\def\newrefformat#1#2{%
  \@namedef{pr@#1}##1{#2}}
\def\fref#1{\@prettyref#1:}
\def\@prettyref#1:#2:{%
  \expandafter\ifx\csname pr@#1\endcsname\relax%
    \PackageWarning{prettyref}{Reference format #1\space undefined}%
    \ref{#1:#2}%
  \else%
    \csname pr@#1\endcsname{#1:#2}%
  \fi%
}

\newrefformat{eq}{\eqref{#1}}
\newrefformat{cha}{Chapter~\ref{#1}}
\newrefformat{sec}{Section~\ref{#1}}
\newrefformat{apx}{Appendix~\ref{#1}}
\newrefformat{tab}{Table~\ref{#1} on page~\pageref{#1}}
\newrefformat{fig}{Figure~\ref{#1} on page~\pageref{#1}}
\newrefformat{item}{\ref{#1}}

\ifx\thmnum\undefined

\fi

\newcommand{\mynewthm}[3][]{%
  \def\PARAM{#1}
  \ifx\PARAM\empty
  \newtheorem{#2}[thmnum]{#3}
  \else
  \newtheorem{#2}{#3}[#1]
  \fi
  \newtheorem*{#2*}{#3}%
  \newrefformat{#2}{#3~\ref{##1}}%
}

\ifx\l@french\undefined
\newcommand{\ThmLabel}{Theorem}
\newcommand{\PrpLabel}{Proposition}
\newcommand{\LemLabel}{Lemma}
\newcommand{\FctLabel}{Fact}
\newcommand{\CorLabel}{Corollary}
\newcommand{\DfnLabel}{Definition}
\newcommand{\ConvLabel}{Convention}
\newcommand{\NtnLabel}{Notation}
\newcommand{\CstLabel}{Construction}
\newcommand{\ExmLabel}{Example}
\newcommand{\RmkLabel}{Remark}
\newcommand{\QstLabel}{Question}

\else
\newcommand{\ThmLabel}{\iflanguage{french}{Th\'eor\`eme}{Theorem}}
\newcommand{\PrpLabel}{Proposition}
\newcommand{\LemLabel}{\iflanguage{french}{Lemme}{Lemma}}
\newcommand{\FctLabel}{\iflanguage{french}{Fait}{Fact}}
\newcommand{\CorLabel}{\iflanguage{french}{Corollaire}{Corollary}}
\newcommand{\DfnLabel}{\iflanguage{french}{D\'efinition}{Definition}}
\newcommand{\ConvLabel}{Convention}
\newcommand{\NtnLabel}{Notation}
\newcommand{\CstLabel}{Construction}
\newcommand{\ExmLabel}{\iflanguage{french}{Exemple}{Example}}
\newcommand{\RmkLabel}{\iflanguage{french}{Remarque}{Remark}}
\newcommand{\QstLabel}{Question}

\fi

\theoremstyle{plain}
\mynewthm{thm}{\ThmLabel}
\mynewthm{prp}{\PrpLabel}
\mynewthm{lem}{\LemLabel}
\mynewthm{fct}{\FctLabel}
\mynewthm{cor}{\CorLabel}

\theoremstyle{definition}
\mynewthm{dfn}{\DfnLabel}
\mynewthm{conv}{\ConvLabel}
\mynewthm{conj}{Conjecture}
\mynewthm{ntn}{\NtnLabel}
\mynewthm{cst}{\CstLabel}

\theoremstyle{remark}
\mynewthm{rmk}{\RmkLabel}
\mynewthm{qst}{\QstLabel}
\mynewthm{exm}{\ExmLabel}

\ifx\Presentation\undefined

\newcommand{\myenumlabel}[1]{\textnormal{(\roman{#1})}}

\fi

\renewcommand{\today}{%
  \number\day\space
  \ifcase\month\or
  January\or February\or March\or April\or May\or June\or
  July\or August\or September\or October\or November\or December\fi
  \space \number\year}

\newcounter{cycprfcnt}
\newenvironment{cycprf}%
{\begin{list}{\PackageWarning{begnac}{Label required for cycprf}}%
  {%
    \setcounter{cycprfcnt}{1}
    \setlength{\itemindent}{0.5\leftmargin}%
    \setlength{\leftmargin}{0pt}%
    \newcommand{\cpcurr}{\myenumlabel{cycprfcnt}}%
    \newcommand{\cpnext}{\addtocounter{cycprfcnt}{1}\cpcurr}%
    \newcommand{\cpnum}[1]{\setcounter{cycprfcnt}{##1}\cpcurr}%
    \newcommand{\cpfirst}{\cpnum{1}}%
    \newcommand{\impnext}{\cpcurr{} $\Longrightarrow$ \cpnext.}%
    \newcommand{\impfirst}{\cpcurr{} $\Longrightarrow$ \cpfirst.}%
  }%
}%
{\qedhere\end{list}}%

\def\indsym#1#2{%
  \setbox0=\hbox{$\m@th#1x$}%
  \kern\wd0%
  \hbox to 0pt{\hss$\m@th#1\mid$\hbox to 0pt{$\m@th#1^{#2}$\hss}\hss}%
  \lower.9\ht0\hbox to 0pt{\hss$\m@th#1\smile$\hss}%
  \kern\wd0}

\def\nindsym#1#2{%
  \setbox0=\hbox{$\m@th#1x$}%
  \kern\wd0%
  \hbox to 0pt{\hss$\m@th#1\not$\kern1.4\wd0\hss}
  \hbox to 0pt{\hss$\m@th#1\mid$\hbox to 0pt{$\m@th#1^{#2}$\hss}\hss}%
  \lower.9\ht0\hbox to 0pt{\hss$\m@th#1\smile$\hss}%
  \kern\wd0}

\def\dotminussym#1#2{%
  \setbox0=\hbox{$\m@th#1-$}%
  \kern.5\wd0%
  \hbox to 0pt{\hss\hbox{$\m@th#1-$}\hss}%
  \raise.6\ht0\hbox to 0pt{\hss$\m@th#1.$\hss}%
  \kern.5\wd0}
\newcommand{\dotminus}{\mathbin{\mathpalette\dotminussym{}}}

\renewcommand{\emptyset}{\varnothing}
\renewcommand{\setminus}{\smallsetminus}
\def\models{\vDash}

\newcommand{\rest}{{\restriction}}

\newcommand{\half}[1][1]{\hbox{$\frac{#1}{2}$}}

\DeclareMathOperator{\tp}{tp}

\DeclareMathOperator{\Th}{Th}
\DeclareMathOperator{\Mod}{Mod}
\DeclareMathOperator{\tS}{S}
\DeclareMathOperator{\dcl}{dcl}
\DeclareMathOperator{\acl}{acl}

\DeclareMathOperator{\diam}{diam}

\DeclareMathOperator{\id}{id}

\DeclareMathOperator{\Hom}{Hom}

\DeclareMathOperator{\Aut}{Aut}

\DeclareMathOperator{\Gal}{Gal}

\DeclareMathOperator{\dom}{dom}

\DeclareMathOperator{\Pert}{Pert}

\DeclareMathOperator{\CB}{CB}
\DeclareMathOperator{\RM}{RM}
\DeclareMathOperator{\dM}{dM}

\newcommand{\fA}{\mathfrak{A}}
\newcommand{\fB}{\mathfrak{B}}

\newcommand{\fM}{\mathfrak{M}}

\newcommand{\fp}{\mathfrak{p}}

\newcommand{\cB}{\mathcal{B}}

\newcommand{\cF}{\mathcal{F}}

\newcommand{\cL}{\mathcal{L}}

\newcommand{\cQ}{\mathcal{Q}}

\newcommand{\sB}{\mathscr{B}}

\newcommand{\sT}{\mathscr{T}}
\newcommand{\sU}{\mathscr{U}}

\newcommand{\bC}{\mathbb{C}}

\newcommand{\bQ}{\mathbb{Q}}
\newcommand{\bR}{\mathbb{R}}

\makeatother

\DeclareMathOperator{\wt}{wt}
\DeclareMathOperator{\CBd}{CBd}
\newcommand{\fT}{\mathfrak{T}}
\newcommand{\fTM}{\mathfrak{TM}}
\newcommand{\defcomp}{\chi_{\textrm{def}}}

\begin{document}

\title{Topometric spaces and perturbations of metric structures}

\author{Ita\"\i{} \textsc{Ben Yaacov}}

\address{Ita\"\i{} \textsc{Ben Yaacov} \\
  Universit\'e de Lyon \\
  Universit\'e Lyon 1 \\
  Institut Camille Jordan, CNRS UMR 5208 \\
  43 boulevard du 11 novembre 1918 \\
  F-69622 Villeurbanne Cedex \\
  France}

\urladdr{http\string://math.univ-lyon1.fr/\textasciitilde begnac/}

\thanks{Research partially supported by NSF grant DMS-0500172.}
\thanks{Work on the present paper started during the
  Methods of Logic in
  Mathematics III meeting at the Euler Institute in Saint-Petersburg, and
  formed the basis for a tutorial given
  at the Oxford Workshop in Model Theory, Oxford University, during
  summer 2006.
  The author wishes to thank the organisers of both conferences
  for the invitations.
  The author wishes to extend particular thanks to the
  tree at the corner
  of Kamennoostrovskii Prospekt and Pesochnaya Naverjnaya for kindly
  choosing to miss him.}
\thanks{The author would also like to thank Anand Pillay for stimulating
  discussions and the referees for many helpful comments.}

\svnInfo $Id: TopoPert.tex 589 2008-05-22 13:55:23Z begnac $
\thanks{\textit{Revision}: {\svnInfoRevision};
  \textit{Date}: \today}

\keywords{Topometric spaces, continuous logic, type spaces,
  Cantor-Bendixson rank, perturbation}
\subjclass[2000]{03C95,03C90,03C45,54H99}

\begin{abstract}
  We develop the general theory of \emph{topometric spaces}, i.e.,
  topological spaces equipped with a well-behaved lower
  semi-continuous metric.
  Spaces of global and local types in continuous logic are the
  motivating examples for the study of such spaces.

  In particular, we develop Cantor-Bendixson analysis of
  topometric spaces, which can serve as a basis for the study of local
  stability (extending the \textit{ad hoc} development in
  \cite{BenYaacov-Usvyatsov:CFO}),
  as well as of global $\aleph_0$-stability.

  We conclude with a study of perturbation systems (see
  \cite{BenYaacov:Perturbations}) in the formalism of topometric spaces.
  In particular, we show how the abstract development applies to
  $\aleph_0$-stability up to perturbation.
\end{abstract}

\maketitle

\section*{Introduction}

Topometric spaces, namely spaces equipped both with a topology and
with a metric,
are omnipresent in continuous logic and in fact predate it.

Global type spaces, in the sense of continuous logic, as well as in
the sense of predecessors such as Henson's logic or metric compact
abstract theories, are equipped with a logic topology as well as with
a natural metric
$d(p,q) = d\bigl( p(\bar M),q(\bar M) \bigr)$ (where $\bar M$ is
the monster model).
Iovino's notion of a uniform structure on the type
spaces \cite{Iovino:StableBanach} is an early attempt to
put this metric structure in a more general setting,
and as such may be viewed
as a precursor to the formalism we propose here.
The metric nature of global types spaces was used by Iovino and
later by the author to define useful notions of Morley ranks.
These ranks play a crucial role
in the proof of Morley's Theorem for metric structures in
\cite{BenYaacov:Morley}.

Continuous logic was proposed and developed in
\cite{BenYaacov-Usvyatsov:CFO} as a model-theoretic formalism for
metric structures.
Unlike its predecessors it provides a good notion of a local type
space $\tS_\varphi(M)$, namely the space of $\varphi$-types for a
fixed formula $\varphi$.
Again, in addition to the logic topology, this space is equipped with a
useful metric
$d_\varphi(p,q) = \sup_{\bar b} |\varphi(\bar x,\bar b)^p -
\varphi(\bar x,\bar b)^q|$.
More examples comes from the study of perturbations of metric
structures in \cite{BenYaacov:Perturbations}, where perturbation
metrics turn out to be
alternative topometric structures on the type spaces.

In addition, topometric analogues of the classical (one should say
``discrete'') Cantor-Bendixson analysis in these spaces play important
roles in various contexts.
In global type spaces they can be used to
characterise $\aleph_0$-stability and define the Morley ranks
which were constructed (in a far more complicated manner) in
\cite{BenYaacov:Morley}.
In local type spaces they can be use to
characterise local stability and independence, as in
\cite{BenYaacov-Usvyatsov:CFO}.
Finally, at the end of the present paper, we use them to
for a rudimentary study of
the notion of a theory being \emph{$\aleph_0$-stable up to perturbation},
which occurs more and more in recently studied examples.
In particular we show that this property is characterised by the
existence of corresponding Morley ranks.

In the present paper we unite these examples under the single
definition of a \emph{topometric space}.
We then proceed to study topometric spaces as such,
much like general topology studies topological spaces,
with a particular emphasis on Cantor-Bendixson analysis.
Alongside this abstract study we provide many motivating examples from
continuous logic as well as applications of our abstract results to
the study of metric structures.

In \fref{sec:Topo} we define topometric spaces and
the category of topometric spaces.
While we are quite certain about the category of \emph{compact}
topometric spaces, we propose to extend our definitions to locally
compact and even more general spaces, with some lower degree of
certitude.
In particular, we study questions such the existence of quotients
which preserve part of the structure, which seem to be a little more
complicated than for classical topological spaces.

In \fref{sec:Isol} we study various notions analogous to isolation in
classical topological spaces.
In the case of topometric type spaces, \emph{$d$-isolated} types are
indeed the correct analogues of isolated types in classical logic
(such types were referred to in \cite{BenYaacov:Morley} as
\emph{principal}, following Henson's earlier terminology).

In \fref{sec:CB} we study several natural notions of Cantor-Bendixson
ranks,
showing that they all give rise the same notion of Cantor-Bendixson
analysability.
We give several
results characterising Cantor-Bendixson analysability and comparing
the Cantor-Bendixson ranks of two spaces.

Finally, in \fref{sec:PertMet} we study the special case of topometric
spaces arising as type spaces equipped with perturbation metrics.
We study notions such as $\lambda$-stability, and in particular
$\aleph_0$-stability, up to perturbation.

For the purpose of examples we shall assume familiarity with the
basics of continuous first order logic, as developed in
\cite{BenYaacov-Usvyatsov:CFO}.
For a general survey of continuous logic and the model theory of
metric structures we refer the reader to
\cite{BenYaacov-Berenstein-Henson-Usvyatsov:NewtonMS}.

\section{Topometric spaces}
\label{sec:Topo}

\subsection{Basic properties}
While a metric is usually defined to take values in $[0,\infty)$, we
allow infinite distances.
If $(X,d)$ is a metric space then distances between sets are defined
as usual $d(A,B) = \inf \{d(x,y)\colon x \in A, y \in B\}$ and
$d(x,A) = d(\{x\},A)$.
We follow the convention that
$d(x,\emptyset) = d(A,\emptyset) = \inf \emptyset = \infty$.

\begin{ntn}
  Let $(X,d)$ be a metric space, $A \subseteq X$, $r \in \bR^+$.
  We define:
  \begin{align*}
    B(A,r) & = \{x \in X\colon d(x,A) < r\} \\
    {\overline B}(A,r) & = \{x \in X\colon d(x,A) \leq r\}
  \end{align*}
  When $A$ is a singleton $\{a\}$ we may write $B(a,r)$ and
  ${\overline B}(a,r)$ instead.
\end{ntn}

\begin{dfn}
  \label{dfn:Topometric}
  A \emph{(Hausdorff) topometric space} is a
  triplet $(X,\sT,d) = (X,\sT_X,d_X)$ where $X$ is a
  set of points, $\sT$ is a topology on $X$
  and $d$ is a $[0,\infty]$-valued metric on $X$:
  \begin{enumerate}
  \item The metric refines the topology.
    In other words, for every open $U \subseteq X$ and every $x \in U$ there is
    $r > 0$ such that $B(x,r) \subseteq U$.
  \item The metric function $d\colon X^2 \to [0,\infty]$
    is lower semi-continuous,
    i.e., for all $r \in \bR^+$ the set
    $\{(a,b) \in X^2\colon d(a,b) \leq r\}$ is closed in $X^2$.
  \end{enumerate}
\end{dfn}

\begin{conv}
  We shall follow the convention that unless explicitly qualified
  otherwise, terms and notations from the vocabulary of
  general topology (e.g., compactness, continuity, etc.)\
  refer to the topological space $(X,\sT)$, while
  terms from the vocabulary of metric spaces which are not applicable
  in general topology (e.g., uniform continuity, completeness)
  refer to the metric space $(X,d)$.
\end{conv}

The topological closure of a subset $Y \subseteq X$ is denoted
by $\overline Y$.
Note that the closed set
$\overline{B(a,r)}$ should not
be confused with ${\overline B}(a,r)$,
which is defined in pure metric terms.
Lower semi-continuity of $d$ implies that ${\overline B}(a,r)$
is closed (more generally, we show in \fref{lem:ClsdMetNeighb}
below that ${\overline B}(F,r)$ is closed for every compact $F$), so
$\overline {B(a,r)} \subseteq {\overline B}(a,r)$.
A discrete $0/1$ metric provides us with an extreme example
of proper inclusion:
$\overline {B(a,1)} = \{a\} \neq X = {\overline B}(a,1)$.

Recall that the \emph{weight} of a topological space $X$, denoted $\wt(X)$,
is the minimal cardinality of a base of open sets for $X$.
Similarly, if $X$ is a metric space, we use $\|X\|$ to denote its
\emph{density character}, i.e., the minimal size of a dense subset.
In case $X$ is a topometric space, $\wt(X)$ refers to its topological
part while $\|X\|$ to its metric part.
If $X$ is a finite space then $\|X\| = \wt(X) = |X|$; otherwise both are
infinite.

\begin{lem}
  \label{lem:LowerSemiContinuity}
  A topological space $X$ equipped with a lower semi-continuous
  metric is Hausdorff.
  If $X$ is in addition compact then the metric must refine the
  topology and $X$ is a topometric space
  (in other words, for compact spaces the second item in
  \fref{dfn:Topometric} implies the first item).
\end{lem}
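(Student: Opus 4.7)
The proof naturally splits into two independent parts.

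For the Hausdorff part, I would use the standard fact that $X$ is Hausdorff iff the diagonal $\Delta = \{(x,x) : x \in X\}$ is closed in $X^2$. The key observation is that since $d$ is a metric, $\Delta = \{(a,b) \in X^2 : d(a,b) = 0\} = \{(a,b) \in X^2 : d(a,b) \leq 0\}$, and this last set is closed by the lower semi-continuity assumption (the case $r = 0$ of the condition). So $X$ is Hausdorff, no compactness needed.

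For the refinement part under the additional compactness hypothesis, I would work with the complementary formulation: given an open $U$ and a point $x \in U$, with $F = X \setminus U$, it suffices to show $d(x,F) > 0$, since then $B(x,d(x,F)) \subseteq U$. The function $f : X \to [0,\infty]$, $f(y) = d(x,y)$, is lower semi-continuous: it is the composition of the continuous map $y \mapsto (x,y)$ with $d$, and if $y_n \to y$ then $(x,y_n) \to (x,y)$, so $\liminf f(y_n) \geq f(y)$ follows from lower semi-continuity of $d$. Equivalently, each sublevel set $\{y : d(x,y) \leq r\}$ is the section of the closed set $\{(a,b) : d(a,b) \leq r\}$ at $a = x$, hence closed.

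Now $F$ is closed in the compact space $X$, so it is compact, and a lower semi-continuous function on a nonempty compact space attains its infimum. (If $F = \emptyset$, the conclusion $d(x,F) = \infty > 0$ is trivial by convention.) Hence there exists $y_0 \in F$ with $d(x,y_0) = \inf_{y \in F} d(x,y) = d(x,F)$. Since $x \in U$ and $y_0 \in F$, we have $y_0 \neq x$, whence $d(x,y_0) > 0$, so $d(x,F) > 0$ as required. This shows the metric refines the topology and hence that $(X,\sT,d)$ is a topometric space.

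The only point requiring slight care is the possibility of infinite distances, handled by the convention $d(x,\emptyset) = \infty$ and by noting that if the infimum is $\infty$ then there is nothing to prove, while if it is finite then the standard argument (intersecting closed sublevel sets $\{f \leq c + 1/n\}$ over the compact $F$) gives attainment. I do not anticipate a genuine obstacle here; the whole lemma is essentially an exercise combining the diagonal characterisation of Hausdorffness with the classical fact that l.s.c.\ functions on compact sets attain their infima.
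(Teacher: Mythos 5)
Your proof is correct and follows essentially the same route as the paper: the Hausdorff part via closedness of the diagonal $d^{-1}(0)$ is identical, and your refinement argument (the l.s.c.\ function $y \mapsto d(x,y)$ attains a positive infimum on the compact complement $F = X \setminus U$) is the dual formulation of the paper's step, which instead intersects the closed balls ${\overline B}(x,r)$ over $r>0$ and extracts a single $r$ with ${\overline B}(x,r) \subseteq U$ by compactness. Both hinge on exactly the same facts, namely that the sublevel sets of $d$ are closed and that a nested family of nonempty closed sets in a compact space has nonempty intersection.
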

\begin{proof}
  Assume $X$ is equipped with a lower semi-continuous metric $d$.
  Then the diagonal $\Delta X = d^{-1}(0) \subseteq X \times X$
  is closed and $X$ is Hausdorff.

  We observed earlier that ${\overline B}(a,r)$ is closed for all $a \in X$ and
  $r \in \bR^+$.
  If $U$ is a neighbourhood of $a$ then
  $\bigcap_{r>0} {\overline B}(a,r) = \{a\} \subseteq U$.
  If $X$ is compact then
  ${\overline B}(a,r) \subseteq U$ for some $r> 0$,
  so the metric refines the topology, as desired.
\end{proof}

\begin{exm}
  The motivating examples come from
  continuous first order logic \cite{BenYaacov-Usvyatsov:CFO}.
  Type spaces, which are naturally equipped with an intrinsic
  ``logic topology'', also admit one or several metric structures
  rendering them topometric spaces:
  \begin{enumerate}
  \item The type spaces $\tS_n(T)$ of a theory $T$, equipped with what
    we call the \emph{standard metric}:
    $$d(p,q) = \inf \{d(a,b)\colon \text{$a \models p$ and $b \models q$ in a
      saturated model $M \models T$}\}.$$
    If $T$ is incomplete and $p,q$ belong to distinct completions then
    $d(p,q) = \infty$.
  \item Type spaces in unbounded continuous logic were defined in
    \cite{BenYaacov:Perturbations},
    with the distance between types defined as above.
    Unlike type spaces in standard continuous logic, these are merely
    locally compact.
  \item Local type spaces $\tS_\varphi(M)$ over a model:
    $$d(p,q) = \sup \{|\varphi(x,b)^p-\varphi(x,b)^q|\colon b \in M\}.$$
  \item Perturbation systems are presented in \cite{BenYaacov:Perturbations} via an
    alternative system of topometric structures on the type spaces
    $\tS_n(T)$, where the metric is the ``perturbation distance''
    $d_\fp$.
    Here $d(p,q) = \infty$ means that a realisation of $p$ cannot be
    perturbed into a realisation of $q$.
  \item The metric $\tilde d_\fp$, defined in \cite{BenYaacov:Perturbations}
    as a combination of $d$ and $d_\fp$, also renders the topological
    space $\tS_n(T)$ a topometric space.
  \end{enumerate}
\end{exm}

There are two extreme kinds of topometric spaces which arise naturally
from standard topological and metric spaces:
\begin{dfn}
  \begin{enumerate}
  \item A \emph{maximal} topometric space is one in which the metric
    is discrete.
  \item A \emph{minimal} topometric space is one in which the metric
    coincides with the topology.
  \end{enumerate}
\end{dfn}

\begin{exm}
  Every Hausdorff topological space can be naturally
  viewed as a maximal topometric space.
  Similarly, every metric space can be naturally viewed as a minimal
  topometric space.
\end{exm}

Clearly a topometric space $X$ is minimal if and only if
the metric function $d\colon X^2 \to \bR^+$ is continuous
(rather than merely lower semi-continuous).
If $X$ is compact, then this is further equivalent to the metric
topology on $X$ being compact.

A topometric space is both minimal and maximal if and only if it is
topologically discrete: thus the minimal topometric spaces
should be viewed as the topometric generalisation of
classical discrete topological spaces.

\begin{lem}
  \label{lem:ClsdMetNeighb}
  Let $X$ be a topometric space, $F \subseteq X$ compact.
  Then ${\overline B}(F,r)$ is closed for every $r \in \bR^+$
  (we say that $F$ has closed metric neighbourhoods in $X$).
\end{lem}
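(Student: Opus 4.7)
The plan is to combine lower semi-continuity of $d$ with compactness of $F$ via a closed-projection argument, which is arguably the cleanest route.

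First I would observe that $F$ is closed in $X$: by \fref{lem:LowerSemiContinuity}, $X$ is Hausdorff, and compact subsets of Hausdorff spaces are closed. So $X \times F$ is closed in $X \times X$. Next, by the lower semi-continuity assumption in \fref{dfn:Topometric}, the set
\[
D = \{(a,b) \in X \times X \colon d(a,b) \leq r\}
\]
is closed in $X \times X$. Therefore $D \cap (X \times F)$ is closed in $X \times F$.

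The second ingredient is the standard ``tube lemma'': the projection $\pi_1 \colon X \times F \to X$ is a closed map because $F$ is compact. (Given a closed $C \subseteq X \times F$ and a point $x \notin \pi_1(C)$, for each $y \in F$ there is a basic open rectangle $U_y \times V_y$ around $(x,y)$ disjoint from $C$; finitely many $V_{y_i}$ cover $F$, and $\bigcap U_{y_i}$ is a neighbourhood of $x$ disjoint from $\pi_1(C)$.) Applying this to $C = D \cap (X \times F)$, the image
\[
\pi_1\bigl( D \cap (X \times F) \bigr) = \{a \in X \colon \exists b \in F,\ d(a,b) \leq r\} = {\overline B}(F,r)
\]
is closed in $X$, which is what we want.

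There is no real obstacle: both ingredients are elementary, and the only point that needs a moment's care is verifying that the projection along the compact fibre $F$ is closed, which is a standard consequence of compactness and does not require any further topometric structure. One could alternatively give a direct neighbourhood argument — for $x$ with $d(x,F) > r$, use the open complement $X^2 \setminus D$ to find, for each $y \in F$, open rectangles $U_y \times V_y \ni (x,y)$ on which $d > r$, then cover $F$ by finitely many $V_{y_i}$ and intersect the $U_{y_i}$ — but this is essentially the tube lemma in disguise, so I prefer the projection formulation for conceptual clarity.
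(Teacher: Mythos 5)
Your argument is essentially the paper's own proof: both rest on the fact that the projection along the compact factor $F$ is a closed map, applied to the closed set $d^{-1}([0,r])$ intersected with the slab over $F$. The one step you pass over silently is the final identification
\[
\pi_1\bigl( D \cap (X \times F) \bigr) \;=\; \{a \in X \colon \exists\, b \in F,\ d(a,b) \leq r\} \;=\; {\overline B}(F,r),
\]
whose second equality is not by definition: ${\overline B}(F,r)$ is defined via the \emph{infimum} $d(a,F) = \inf_{b \in F} d(a,b) \leq r$, and one must check that this infimum is witnessed by some $b \in F$ with $d(a,b) \leq r$. This does hold here — for fixed $a$ the map $b \mapsto d(a,b)$ is lower semi-continuous, hence attains its infimum on the compact set $F$ — but it deserves a line. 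The paper sidesteps the issue differently, by only projecting the sets $G_{r'} = (F \times X) \cap d^{-1}([0,r'])$ for $r' > r$ and then writing ${\overline B}(F,r) = \bigcap_{r' > r} \pi(G_{r'})$, which requires no attainment claim at all. Either fix is fine; with that line added your proof is complete.
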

\begin{proof}
  For $r \in \bR^+$ let $G_r = F \times X \cap d^{-1}([0,r])$.
  Then $G_r \subseteq F \times X$ is closed.
  Since $F$ is compact, the projection $\pi\colon F \times X \to X$ is closed,
  whereby $\pi(G_r)$ is closed.
  We conclude that ${\overline B}(F,r) = \bigcap_{r' > r} \pi(G_{r'})$ is closed.
\end{proof}

In particular, if $X$ is compact then every closed subset of $X$ has
closed metric neighbourhoods.
In fact we can say something slightly stronger:

\begin{lem}
  \label{lem:TopometricDef}
  Let $X$ be a Hausdorff topological space, $d$ a metric on
  $X$.
  If $X$ is compact and $d\colon X^2 \to [0,\infty]$
  is lower semi-continuous then
  every closed set in $X$ has
  closed metric neighbourhoods.
  Conversely, if $X$ is regular (in particular, if $X$ is locally
  compact) and every closed set has closed metric neighbourhoods then
  $d$ is lower semi-continuous.
\end{lem}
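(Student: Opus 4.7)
The forward direction is essentially already in hand: if $X$ is compact Hausdorff, any closed subset $F \subseteq X$ is itself compact, and \fref{lem:ClsdMetNeighb} then gives that ${\overline B}(F,r)$ is closed for every $r \in \bR^+$. So the work is all in the converse.

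For the converse, I need to show that $\{(a,b) \in X^2\colon d(a,b) \leq r\}$ is closed for each $r \in \bR^+$, i.e.\ that whenever $d(a,b) > r$, the pair $(a,b)$ has an open product neighbourhood $U \times V \subseteq X^2$ on which $d > r$. The idea is to use the closed-metric-neighbourhood hypothesis twice, once to get a good closed neighbourhood of $a$ and then once more to separate $b$ from that neighbourhood; the small but essential trick is to interpose a value $r' \in (r,d(a,b))$ so that when we eventually pass to an infimum we still have a strict inequality.

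More precisely, fix $r' \in \bR^+$ with $r < r' < d(a,b)$. Since $X$ is regular it is Hausdorff, so $\{b\}$ is closed; by hypothesis ${\overline B}(\{b\},r')$ is then closed, and it does not contain $a$. Regularity yields an open $W \ni a$ and a closed $F$ with $a \in W \subseteq F \subseteq X \setminus {\overline B}(\{b\},r')$. Thus $d(a',b) > r'$ for every $a' \in F$, so $d(b,F) \geq r' > r$, and therefore $b \notin {\overline B}(F,r)$. Applying the hypothesis to the closed set $F$, the set ${\overline B}(F,r)$ is closed, and $V := X \setminus {\overline B}(F,r)$ is an open neighbourhood of $b$. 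Taking $U := W$, we have $U \times V$ an open neighbourhood of $(a,b)$ in $X^2$, and for any $(a',b') \in U \times V$ we have $a' \in F$ and $b' \notin {\overline B}(F,r)$, whence $d(a',b') > r$, as required.

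The parenthetical remark that this applies in particular when $X$ is locally compact is just the standard fact that every locally compact Hausdorff space is regular. The only real subtlety in the argument is the choice of the auxiliary value $r'$: without it, one would only obtain $d(a',b) > r$ for $a' \in F$, and the infimum $d(b,F)$ could collapse to $r$, preventing $b$ from lying in the open complement of ${\overline B}(F,r)$.
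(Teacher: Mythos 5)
Your proof is correct and follows essentially the same route as the paper's: interpose an intermediate radius, apply the closed-metric-neighbourhoods hypothesis to the singleton $\{b\}$, use regularity to extract a closed neighbourhood $F$ of $a$ missing that ball, and then apply the hypothesis again to $F$. The only difference is a mild streamlining --- you take $V = X \setminus {\overline B}(F,r)$ directly as the neighbourhood of $b$, so you need only one intermediate value and one appeal to regularity, where the paper shrinks a second time and uses two intermediate values $r_1 > r_2$.
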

\begin{proof}
  The first assertion is a consequence of
  \fref{lem:ClsdMetNeighb}.
  For the converse assume ${\overline B}(F,r)$ is
  closed for every closed $F$ and $r > 0$.
  Assume that $d(x,y) > r$, and choose
  some intermediate values $d(x,y) > r_1 > r_2 > r$.
  First, we have $x \notin {\overline B}(y,r_1)$ and the latter is closed.
  We can therefore find an open set $V$ such that
  $x \in V \subseteq \overline V \subseteq X \setminus {\overline B}(y,r_1)$,
  whereby $d(\overline V,y) \geq r_1 > r_2$, and thus
  $y \notin {\overline B}(\overline V,r_2)$.
  Following the same reasoning we find $U$ open such that
  $y \in U \subseteq
  \overline U \subseteq X \setminus {\overline B}(\overline V,r_2)$.
  We conclude that
  $d(V,U) \geq d(\overline V,\overline U) \geq r_2 > r$, and
  $(x,y) \in V\times U \subseteq X^2 \setminus d^{-1}([0,r])$,
  as desired.
\end{proof}

Along with \fref{lem:LowerSemiContinuity},
this means that our definition of a \emph{compact} topometric space
coincides with that given in \cite{BenYaacov-Usvyatsov:CFO},
based on closed sets having closed metric neighbourhoods.
If we drop the compactness assumption then still in every
``reasonable'' space lower semi-continuity of the metric is weaker
than the closed metric neighbourhoods assumption.
It is nonetheless sufficient for our purposes in compact and locally
compact spaces, and as all natural examples are at least locally
compact we have no choice but to base our intuition on those.
Finally, lower semi-continuity passes to product spaces (equipped with
the supremum distance) whereas the closed metric neighbourhoods
property does not seem to do so.

\begin{lem}
  \label{lem:DistFromCompact}
  Let $X$ be a topometric space, $K,F \subseteq X$ compact.
  Then $d(K,F) = \min \{d(x,y)\colon x\in K,y\in F\}$, i.e., the minimum is
  attained by some $x_0 \in K$ and $y_0 \in Y$.
\end{lem}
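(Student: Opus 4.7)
The plan is to show that $d$, restricted to the compact product $K \times F$, attains its infimum; this is the standard consequence of lower semi-continuity on a compact space.

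First, I would dispose of trivialities: if either $K$ or $F$ is empty, the statement is vacuous (there is nothing to minimise over), so assume both are nonempty. Then $K \times F$ is a nonempty compact subspace of $X \times X$, and by definition $d(K,F) = \inf \{d(x,y) \colon (x,y) \in K \times F\} \in [0,\infty]$.

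The key step is to apply lower semi-continuity, as guaranteed by \fref{dfn:Topometric}. Set $r_0 = d(K,F)$. If $r_0 = \infty$, then $d \equiv \infty$ on $K \times F$ and any pair attains the minimum. Otherwise, for each $\varepsilon > 0$ consider
\begin{equation*}
  C_\varepsilon = (K \times F) \cap d^{-1}\bigl([0, r_0 + \varepsilon]\bigr).
\end{equation*}
By lower semi-continuity of $d$ the set $d^{-1}([0, r_0 + \varepsilon])$ is closed in $X^2$, so $C_\varepsilon$ is closed in the compact set $K \times F$, hence compact. By definition of $r_0$ as an infimum, each $C_\varepsilon$ is nonempty, and the family $\{C_\varepsilon\}_{\varepsilon > 0}$ is decreasing, hence has the finite intersection property. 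Compactness of $K \times F$ yields a point $(x_0, y_0) \in \bigcap_{\varepsilon > 0} C_\varepsilon$, which satisfies $d(x_0, y_0) \leq r_0 + \varepsilon$ for every $\varepsilon > 0$ and therefore $d(x_0, y_0) = r_0 = d(K,F)$.

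There is no real obstacle here — the only thing worth flagging is that one must use compactness of the product $K \times F$ rather than compactness of $K$ and $F$ individually, and that lower semi-continuity (rather than continuity) of the metric suffices precisely because the sublevel sets $\{d \leq r\}$ are closed, which is all one needs to run the finite intersection argument above.
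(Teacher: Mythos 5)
Your proof is correct and is essentially the paper's own argument: both express the minimiser as a point of the nonempty decreasing intersection of the compact sets $(K\times F)\cap d^{-1}([0,r'])$ for $r'>d(K,F)$, using lower semi-continuity only through the closedness of the sublevel sets. Your explicit handling of the empty and infinite-distance cases is a minor tidying-up of details the paper leaves implicit.
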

\begin{proof}
  Let $r = d(K,F) = \inf \{d(x,y)\colon x\in K,y\in F\}$.
  Then for every $r' > r$: 
  $(K\times F) \cap d^{-1}([0,r']) \neq \emptyset$.
  As $d^{-1}([0,r]) = \bigcap_{r'>r} d^{-1}([0,r'])$ and this is a decreasing
  intersection of compact sets we get
  $(K\times F) \cap d^{-1}([0,r]) \neq \emptyset$, as requried.
\end{proof}

\begin{prp}
  \label{prp:CompactComplete}
  Every compact topometric space is complete.
\end{prp}
\begin{proof}
  Let $\{x_n\}_{n<\omega}$ be a Cauchy sequence in $X$.
  For each $n$ let $r_n = \sup \{d(x_n,x_m)\colon m > n\}$, so $r_n \searrow 0$.
  Then ${\overline B}(x_n,r_n)$ is closed for all $n$ (by
  \fref{lem:ClsdMetNeighb}) and contains $x_m$ for all $m \geq n$.
  By compactness
  $F = \bigcap_{n < \omega} {\overline B}(x_n,r_n) \neq \emptyset$.
  It follows that $F = \{x\}$ where $x$ is the metric limit of
  $\{x_n\}_{n < \omega}$.
\end{proof}

On the other hand, for non-compact spaces completeness is not
isomorphism-invariant: indeed, the minimal spaces $(0,1)$ and $\bR$ are
isomorphic as (minimal) topometric spaces,
yet only one of them is complete.
A more interesting example is that of type spaces of a theory $T$ in
unbounded continuous logic (see \cite{BenYaacov:Perturbations}):
Each type space $\tS_n(T)$
is locally compact and complete.
The compactification procedure
described there consists of embedding it $\tS_n(T) \hookrightarrow \tS_n(T^\infty)$,
where $\tS_n(T^\infty)$ is a compact type space of a standard (i.e., bounded)
theory $T^\infty$.
This embedding is a morphism (continuous and locally uniformly
continuous) and the image is incomplete -- indeed, it is metrically
dense in $\tS_n(T^\infty)$.

\begin{qst}
  Can a ``single point'' (or at least ``few points'') compactification
  be constructed for locally compact topometric spaces?
  In the case of an unbounded continuous theory $T$,
  we should like $\tS_n(T^\infty)$ to be a ``few points compactification''
  of $\tS_n(T)$.
\end{qst}

\subsection{The category of topometric spaces}
In this paper we deal mostly with compact or locally compact
topometric spaces (by our convention this means the topology is
compact or locally compact).
The correct notion of a morphism of compact topometric
spaces seems clear.

\begin{dfn}
  \label{dfn:CompactMorphism}
  Let $X$ and $Y$ be topometric spaces, $X$ compact.
  A \emph{morphism} $f\colon X \to Y$ is a mapping
  which is both continuous (topologically) and uniformly continuous
  (metrically).
\end{dfn}

We seek a candidate for the definition of a morphism between general
topometric spaces.
In the non compact case uniform continuity seems too strong
a requirement.
\begin{dfn}
  \label{dfn:LocUnifCont}
  Let $f\colon X\to Y$ be a mapping between topometric spaces.
  We define some properties of $f$ which depend both on the
  topological and the metric structures of $X$:
  \begin{enumerate}
  \item We say that $f$ is \emph{locally uniformly continuous} if for
    every $x \in X$ has a neighbourhood on which $f$ is uniformly continuous.
  \item We say that $f$ is \emph{weakly locally uniformly continuous} if for
    every $x \in X$ and $\varepsilon > 0$ there are a neighbourhood $U$ of $x$
    and $\delta > 0$ such that if $y,y' \in U$ and
    $d_X(y,y') < \delta$ then $d_Y(f(y),f(y')) \leq \varepsilon$.
  \item We say that $f$ is \emph{uniformly continuous on every
      compact} if every restriction of $f$ to a compact subset of $X$
    is uniformly continuous.
  \end{enumerate}
\end{dfn}

\begin{lem}
  \label{lem:LocUnifCont}
  The properties defined in \fref{dfn:LocUnifCont} imply one another
  from top to bottom.
\end{lem}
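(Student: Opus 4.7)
The plan is to prove the two implications (i)$\Rightarrow$(ii) and (ii)$\Rightarrow$(iii) separately. The first one should be essentially trivial: if $f$ is uniformly continuous on some neighbourhood $U$ of $x$, then for every $\varepsilon>0$ the defining $\delta$ of uniform continuity on $U$ witnesses weak local uniform continuity of $f$ at $x$.

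The interesting implication is (ii)$\Rightarrow$(iii), which I would prove by a net-based compactness argument; sequences would not suffice because the topometric space is not assumed to be first countable. Fix a compact $K\subseteq X$ and suppose toward contradiction that $f\rest K$ fails to be uniformly continuous: there is some $\varepsilon>0$ such that for every $\delta>0$ one finds $y,y'\in K$ with $d_X(y,y')<\delta$ but $d_Y(f(y),f(y'))>\varepsilon$. Index such pairs by $\delta>0$ to produce a net $(y_\delta,y'_\delta)$ in the compact space $K\times K$. Passing to a convergent subnet, we obtain a limit $(x,x')\in K\times K$.

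The key step is now to invoke the lower semi-continuity of $d_X$: since $d_X(y_\delta,y'_\delta)<\delta\to 0$, the closed set $d_X^{-1}([0,0])=\Delta X$ must contain the limit, so $x=x'$. Applying weak local uniform continuity at $x$ with the same $\varepsilon$, we obtain an open neighbourhood $U$ of $x$ and $\delta_0>0$ such that $y,y'\in U$ and $d_X(y,y')<\delta_0$ force $d_Y(f(y),f(y'))\leq\varepsilon$. Eventually along the subnet, $y_\delta,y'_\delta\in U$ (both converge to $x$) and $d_X(y_\delta,y'_\delta)<\delta_0$, which contradicts $d_Y(f(y_\delta),f(y'_\delta))>\varepsilon$.

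The main obstacle, and the reason the statement is not completely formal, is the mismatch between the topological and metric sides: the net $(y_\delta,y'_\delta)$ is chosen so that $d_X(y_\delta,y'_\delta)\to 0$ in the metric sense, but the compactness argument delivers a topological limit, and it is only through lower semi-continuity of $d_X$ that one can conclude the two coordinates of the limit coincide. Once this bridge is set up, the weak local uniform continuity hypothesis at the diagonal point $x$ closes the argument.
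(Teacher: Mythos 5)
Your proof is correct, and it isolates the same (only nontrivial) implication as the paper, namely (ii) $\Rightarrow$ (iii); the implication (i) $\Rightarrow$ (ii) is indeed immediate and the paper dismisses it in one line as well. Where you differ is in the mechanism for exploiting compactness. The paper argues directly: it reduces to the case of compact $X$, covers $X$ by neighbourhoods $U_i$ with moduli $\delta_i$ witnessing weak local uniform continuity, observes that $\bigcup_i U_i\times U_i$ is an open neighbourhood of the diagonal $\bigcap_{\delta>0}d^{-1}([0,\delta])$, and uses closedness of each $d^{-1}([0,\delta])$ plus compactness to find $\delta'>0$ and a finite subfamily with $d^{-1}([0,\delta'])\subseteq\bigcup_{i<n}U_i\times U_i$; the uniform modulus is then $\min\{\delta_0,\dots,\delta_{n-1},\delta'\}$. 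You argue instead by contradiction via a convergent subnet of bad pairs. The two arguments are duals of one another and rest on exactly the same key fact: lower semi-continuity makes each $d^{-1}([0,r])$ closed, which is the only available bridge between the metric hypothesis $d_X(y,y')<\delta$ and topological compactness. Your version has the mild advantage of not needing to pass to the subspace $K$ (the paper's reduction implicitly uses that weak local uniform continuity restricts to subspaces, as the paper itself acknowledges later), while the paper's version is constructive in that it exhibits the modulus. One step to tighten: the conclusion $x=x'$ does not follow merely from closedness of $\Delta X=d_X^{-1}(\{0\})$, since no member of your net lies in $\Delta X$; rather, for each $r>0$ the subnet is eventually in the closed set $d_X^{-1}([0,r])$, so the limit lies in $\bigcap_{r>0}d_X^{-1}([0,r])=\Delta X$. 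This is precisely the use of lower semi-continuity you identify, just phrased one level more carefully.
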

\begin{proof}
  We only need to prove that if $X$ is compact and
  $f\colon X\to Y$ is weakly locally uniformly continuous then $f$ is
  uniformly continuous.

  Indeed, let $\varepsilon > 0$.
  Then $X$ admits an open cover
  $X = \bigcup_{i\in I} U_i$ such that for each $U_i$
  there is $\delta_i > 0$ such that
  if $x,x' \in U_i$ and $d_X(x,x') < \delta_i$ then
  $d_Y(f(x),f(x')) \leq \varepsilon$.
  The set $\bigcup_{i\in I} U_i \times U_i$
  is a neighbourhood of the diagonal of $X$,
  which is equal to $\bigcap_{\delta>0} d^{-1}([0,\delta])$.
  By definition of a topometric space each $d^{-1}([0,\delta])$ is closed.
  Thus by compactness we may assume that
  $\bigcup_{i<n} U_i \times U_i \supseteq d^{-1}([0,\delta'])$.
  Now let $\delta = \min \{\delta_i\colon i < n\}\cup\{\delta'\}$.
  If $x,x' \in X$ and $d(x,x') < \delta$ then
  $x,x' \in U_i$ for some $i < n$ and therefore
  $d(f(x),f(x')) \leq \varepsilon$.
\end{proof}

Thus for locally compact $X$ all these properties agree.
For non locally compact $X$ even local uniform continuity seems too
strong (e.g., for \fref{prp:AutMorph} below).
On the other hand, uniform continuity on every compact is too weak
(let $X$ be the minimal space based on $\bR \setminus \bQ$ and $Y$ the
maximal one: then $\id\colon X\to Y$ is uniformly continuous on every compact
while not being even metrically continuous.)
We find ourselves led to suggesting the intermediary property as the
definition.
\begin{dfn}[Tentative]
  \label{dfn:Morphism}
  Let $X$ and $Y$ be topometric spaces.
  A \emph{morphism} $f\colon X \to Y$ is a mapping
  which is both continuous and weakly locally uniformly continuous.
\end{dfn}
We leave it to the reader to check that the composition of two
morphisms is indeed a morphism.
By \fref{lem:LocUnifCont} this definition agrees with
\fref{dfn:CompactMorphism}.

\begin{prp}
  \label{prp:AutMorph}
  Let $f\colon X\to Y$ be a continuous mapping between topometric spaces, and
  assume that either $X$ is maximal or $Y$ is minimal.
  Then $f$ is a morphism.
\end{prp}
\begin{proof}
  The case where $X$ is maximal is immediate, so we prove the case
  where $Y$ is minimal.
  We need to show that $f$ is weakly locally uniformly continuous.
  So let $\varepsilon > 0$ and $x \in X$.
  Let $V = B(f(x),\varepsilon/2) \subseteq Y$.
  As $Y$ is minimal, $V$ is open, so $U = f^{-1}(V)$ is open in $X$.
  Then $x \in U$, and
  for every $y,y' \in U$ we have $d(f(y),f(y')) \leq \varepsilon$
  (no need for $\delta$ here).
\end{proof}

This justifies the terminology: a maximal topometric structure is the
strongest possible such structure on a topological space, while a
minimal structure is the weakest possible on a (metrisable)
topological space.

We get another reassurance about our definition of a morphism of
non-compact topometric spaces from the following result, telling us
that for all intents and purposes we may identify classical Hausdorff
topological spaces with maximal topometric spaces and classical metric
spaces with minimal topometric spaces:

\begin{prp}
  The construction of a maximal topometric space from a
  Hausdorff topological space is a functor $\fT \to \fTM$, from the
  category of Hausdorff spaces and continuous mappings to that of
  topometric spaces.
  As such it is the left-adjoint of the forgetful functor
  $\fT\fM \to \fT$.
  This functor is an equivalence of categories between
  Hausdorff topological spaces and maximal
  topometric spaces.

  Similarly the construction of a minimal topometric space from a metric
  one is a functor $\fM \to \fTM$, from the category of metric
  spaces and (metrically) continuous mappings to topometric spaces.
  It is the right-adjoint of the forgetful functor $\fT\fM \to \fM$,
  and is an equivalence of categories between metric spaces and
  minimal topometric spaces.
\end{prp}
\begin{proof}
  Let $\psi_\fT \colon \fT \to \fT\fM$ be the maximal topometric space
  construction and $\psi_\fM \colon \fM \to \fT\fM$ be the minimal topometric
  space construction.
  Both are functors by \fref{prp:AutMorph}.
  Let also $\varphi_\fT \colon \fT \to \fT\fM$
  and $\varphi_\fM \colon \fM \to \fT\fM$ be the
  forgetful functors.
  It is immediate from the definition that $\varphi_\fT$
  is indeed a functor; it is also not difficult to check that $\varphi_\fM$
  is.

  Let $X \in \fT$, $Y \in \fT\fM$, and $f\colon X \to Y$ a mapping between their
  underlying sets.
  Clearly if $f\colon \psi_\fT(X) \to Y$ is a morphism then
  $f\colon X \to \varphi_\fT(Y)$ is
  continuous, and the converse is by \fref{prp:AutMorph}.
  Thus $\psi_\fT$ is the left-adjoint of $\varphi_\fT$.
  If $X,Y \in \fT$ we get
  $\Hom_\fT(X,Y) = \Hom_\fT(X,\varphi_\fT\circ\psi_\fT(Y))
  = \Hom_{\fT\fM}(\psi_\fT(X),\psi_\fT(Y))$ whence the equivalence of categories.

  The argument for minimal topometric spaces and metric spaces is
  similar.
\end{proof}

\begin{dfn}
  Let $X$ be a topometric space.
  A \emph{(topometric) subspace} of $X$ is a subset $Y \subseteq X$ equipped
  with the induced structure.
  One easily verifies that this is indeed a topometric space (this was
  in fact used implicitly in the proof of \fref{lem:LocUnifCont}) and
  that the inclusion mapping $Y \hookrightarrow X$ is a morphisms.

  A mapping $f\colon Y \to X$ is a \emph{monomorphism} if it is an
  isomorphism with a subspace of $X$ (its image).
\end{dfn}

When dealing with quotients we feel more secure restricting to
compact spaces.
Recall that a continuous surjective mapping from a compact space to a
Hausdorff space is automatically a topological quotient mapping, so we
only need to worry about the metric structure.

\begin{lem}
  \label{lem:TopologicalQuotient}
  Let $(X,d_X)$ be a compact topometric space and
  $\pi\colon X \to Y$ a Hausdorff topological quotient of $X$.
  Let $d_Y(y,y') = d_X\bigl( \pi^{-1}(y),\pi^{-1}(y') \bigr)$.
  Then $(Y,d_Y)$ is a topometric space and
  $\pi\colon (X,d_X) \to (Y,d_Y)$ is a contractive homomorphism of
  topometric spaces.
\end{lem}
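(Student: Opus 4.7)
The plan is to verify in turn that $d_Y$ is a metric, that it is lower semi-continuous, that it refines the topology of $Y$, and that $\pi$ is a contractive morphism. The driving observation is that since $Y$ is Hausdorff and $\pi$ continuous, each fibre $\pi^{-1}(y)$ is a closed and therefore compact subset of $X$, so by \fref{lem:DistFromCompact} the infimum defining $d_Y(y, y')$ is always attained by some pair $x \in \pi^{-1}(y)$, $x' \in \pi^{-1}(y')$.

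For the metric axioms, symmetry and $d_Y(y, y) = 0$ are immediate. Positivity reduces to attainment: if $d_Y(y, y') = 0$ then a realising pair $(x, x')$ satisfies $d_X(x, x') = 0$, whence $x = x'$ and $y = y'$. The triangle inequality I expect to be the delicate point, since pulling back realisers for $d_Y(y_1, y_2)$ and $d_Y(y_2, y_3)$ yields two elements of $\pi^{-1}(y_2)$ that need not coincide, and the naive chain argument in $X$ picks up an additive error of $\diam \pi^{-1}(y_2)$; this is the step that would need the most care, and where I would expect the specific interplay of $\pi$ with $d_X$ to have to be exploited.

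For lower semi-continuity, my approach is to work with the product map $\pi \times \pi \colon X^2 \to Y^2$, which is closed because $X^2$ is compact and $Y^2$ Hausdorff. Fibrewise attainment yields the identity
\[ d_Y^{-1}([0, r]) = (\pi \times \pi)\bigl( d_X^{-1}([0, r]) \bigr), \]
and the right-hand side is the image under a closed map of a set closed by lower semi-continuity of $d_X$, hence closed in $Y^2$. For the refinement of the topology, given a closed $F \subseteq Y$ and $y_0 \notin F$, the fibres $\pi^{-1}(y_0)$ and $\pi^{-1}(F)$ are disjoint compacts in $X$; by \fref{lem:DistFromCompact} their distance $r$ is attained and strictly positive, so $B_{d_Y}(y_0, r) \cap F = \emptyset$. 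Finally, $\pi$ is contractive directly from the definition and therefore, with its topological continuity, a uniformly continuous morphism. The main obstacle in the whole argument is the triangle inequality; the remaining pieces reduce cleanly to compactness of fibres together with closedness of $\pi \times \pi$.
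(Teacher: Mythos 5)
Your decomposition is sound, and everything except the triangle inequality matches the paper's own argument. The paper likewise proves lower semi-continuity by writing $d_Y^{-1}([0,r])$ as the image of $d_X^{-1}([0,r])$ under the closed map $\pi\times\pi$, using \fref{lem:DistFromCompact} to get the attainment that turns the obvious inclusion into an equality; it then deduces that $d_Y$ refines the topology from compactness of $Y$ together with lower semi-continuity (your direct separation argument with the two disjoint compact sets $\pi^{-1}(y_0)$ and $\pi^{-1}(F)$ is an equally good substitute, and in fact needs no triangle inequality). Contractivity and the axioms other than the triangle inequality are as you say.

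The one step you flag as delicate is a genuine gap, and your diagnosis is exactly right: chaining realisers through $\pi^{-1}(y_2)$ costs an additive error of $\diam\pi^{-1}(y_2)$, and the paper gives you no help, since its proof disposes of the metric axioms with ``the rest follows directly from the construction''. In fact no argument can close this gap from the stated hypotheses, because the statement as written is false. Take $X=\{a,b_1,b_2,c\}$ discrete (hence a compact topometric space for any metric on it), with $d(a,b_1)=d(b_2,c)=1$ and all other distances equal to $10$; every two-edge path uses an edge of length $10$, so the triangle inequality holds in $X$. Identifying $b_1$ with $b_2$ gives a Hausdorff quotient $Y=\{[a],[b],[c]\}$ with $d_Y([a],[b])=d_Y([b],[c])=1$ but $d_Y([a],[c])=10$. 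What is missing is precisely the hypothesis isolated in \fref{lem:PrecQuotRel}, namely $d_X\bigl(x,\pi^{-1}(\pi(x'))\bigr)=d_X\bigl(\pi^{-1}(\pi(x)),x'\bigr)$ for all $x,x'$; under that condition the two realisers in the middle fibre can be reconciled and the triangle inequality goes through as in the proof of that lemma. So your instinct that some ``specific interplay of $\pi$ with $d_X$'' must be exploited was correct --- the trouble is that the lemma as stated supplies none, and the condition has to be added as a hypothesis (as the paper itself effectively does one lemma later).
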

\begin{proof}
  For $r \geq 0$ let
  $\Delta^r_X = \{ (x,x') \in X^2\colon d_X(x,x') \leq r \}$
  and define $\Delta^r_Y \subseteq Y^2$ similarly.
  The set $\Delta^r_X$ is closed in $X^2$ by the lower semi-continuity
  of $d_X$ and by \fref{lem:DistFromCompact} we have
  $\Delta^r_Y = (\pi,\pi)(\Delta^r_X)$ so $\Delta^r_Y$ is closed as
  well.
  Therefore $d_Y$ is lower semi-continuous.
  By \fref{lem:TopometricDef} $d_Y$ refines the topology on $Y$ and
  $(Y,d_Y)$ is a topometric space.
  The rest follows directly from the construction.
\end{proof}

For many purposes we shall require a somewhat stronger notion
of quotient.

\begin{dfn}
  Let $X$ and $Y$ be topometric spaces, $X$ compact.
  An \emph{epimorphism} $f\colon X \to Y$ is a surjective morphism
  satisfying  that for every $\varepsilon > 0$
  there is $\delta > 0$ such that for
  all $x \in X$ and $y \in Y$:
  \begin{gather*}
    d_Y(f(x),y) < \delta \Longrightarrow d_X(x,f^{-1}(y)) \leq \varepsilon.
  \end{gather*}
  We then say that $Y$ is a \emph{quotient space} of $X$, and that $f$
  is a \emph{quotient mapping}.
\end{dfn}
Note that this property is in some sense a converse of uniform
continuity, which may be stated as:
\begin{gather*}
  (\forall\varepsilon)(\exists\delta)(\forall xy)
  \bigl( d_X(x,f^{-1}(y)) < \delta
  \Longrightarrow d_Y(f(x),y) \leq \varepsilon \bigr).
\end{gather*}
Also, a mapping is a surjective monomorphism if and only if it
is an injective epimorphism if and only if it is an isomorphism.

\begin{dfn}
  Let us call a mapping between metric spaces
  $f\colon X \to Y$ is \emph{precise} if for every $x \in X$ and $y \in Y$:
  $d_Y(f(x),y) = d_X(x,f^{-1}(y))$.
  We follow the convention that $d(x,\emptyset) = \infty$.
\end{dfn}

\begin{lem}
  \label{lem:PreciseMapping}
  Let $f\colon X \to Y$ be a continuous precise mapping between topometric
  spaces.
  Then:
  \begin{enumerate}
  \item It is uniformly continuous, and in particular a morphism.
  \item If $f$ is surjective and $X$ is compact then $f$ is an
    epimorphism.
    (Any generalisation of the definition of epimorphisms to
    non-compact spaces should preserve this property.)
  \item The distance from $f(X)$ to its complement $Y \setminus f(X)$ is
    infinite.
    Thus, if we exclude in $Y$ the infinite distance then a precise
    mapping is necessarily surjective.
  \item If $f$ is injective then it is isometric.
  \end{enumerate}
\end{lem}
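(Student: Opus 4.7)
The observation driving all four parts is that the preciseness identity $d_Y(f(x),y) = d_X(x,f^{-1}(y))$, together with the convention $d(x,\emptyset) = \infty$, immediately yields that $f$ is $1$-Lipschitz: specialising $y = f(x')$ gives $d_Y(f(x),f(x')) = d_X(x, f^{-1}(f(x'))) \leq d_X(x,x')$, since $x' \in f^{-1}(f(x'))$. Once one has this estimate in hand, every item reduces to a one-line manipulation.

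For (i), the $1$-Lipschitz property just established makes $f$ uniformly continuous (take $\delta = \varepsilon$), so combined with the continuity hypothesis $f$ is a morphism in the sense of \fref{dfn:Morphism}. For (iv), if $f$ is injective then $f^{-1}(f(x')) = \{x'\}$, so the displayed inequality becomes the equality $d_Y(f(x),f(x')) = d_X(x,x')$ and $f$ is isometric.

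For (ii), once (i) provides that $f$ is a morphism it remains only to verify the extra condition in the definition of an epimorphism. Given $\varepsilon > 0$, I would take $\delta = \varepsilon$: preciseness yields $d_X(x,f^{-1}(y)) = d_Y(f(x),y) < \delta = \varepsilon$ directly. This also justifies the parenthetical remark in the statement, since compactness of $X$ enters only because the definition of epimorphism presupposes it; the underlying $(\varepsilon,\delta)$ property holds with no appeal to compactness, so any reasonable extension of the notion of epimorphism to non-compact spaces should still contain precise surjective continuous maps.

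For (iii), if $y \in Y \setminus f(X)$ then $f^{-1}(y) = \emptyset$, and the convention $d_X(x,\emptyset) = \infty$ combined with preciseness forces $d_Y(f(x),y) = \infty$ for every $x \in X$; hence $d\bigl(f(X),\, Y \setminus f(X)\bigr) = \infty$. If $Y$ admits no pair of points at infinite distance, this forces $Y \setminus f(X) = \emptyset$, which gives the converse assertion. The main (and essentially only) obstacle in the whole argument is spotting the right specialisation $y = f(x')$ in the preciseness identity to extract the $1$-Lipschitz bound; after that every item is routine.
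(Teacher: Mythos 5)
Your proof is correct and fills in exactly the details the paper leaves out (its proof of this lemma is simply ``Immediate''); the key specialisation $y = f(x')$ giving the $1$-Lipschitz bound is the intended observation, and the remaining items follow as you describe.
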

\begin{proof}
  Immediate.
\end{proof}

\subsection{Existence of precise quotients }

We shall need a result saying that interesting quotients of compact
topometric spaces exist.
By interesting we mean ones which preserve some prescribed piece of
information without preserving too much else.
Throughout this subsection $X$ is a compact topometric space
(much of what we say can be extended to locally
compact spaces in a straightforward manner).

First, let us describe precise topometric quotients of $X$.

\begin{lem}
  \label{lem:PrecQuotRel}
  Let $\sim$ be an equivalence relation on $X$.
  Let $[x] = \{x' \in X\colon x \sim x'\}$ and 
  $Y = X/{\sim} = \{[x]\colon x \in X\}$ be the quotient set,
  $\pi\colon X \twoheadrightarrow Y$ the projection
  map.
  Then the following are equivalent:
  \begin{enumerate}
  \item The relation  $\sim$ is closed and for all $x,y \in X$:
    $d_X(x,[y]) = d_X([x],y)$.
  \item There exists a topometric structure on $Y$
    such that $\pi$ is a precise quotient map.
  \end{enumerate}
  Moreover, if such a topometric structure on $Y$ exists then it is
  unique, given by the quotient topology in $Y$ and
  the metric:
  \begin{gather*}
    d_Y([x],[y]) = d_X([x],[y]) = \inf \{d_X(x',y')\colon x'\sim x, y'\sim y\}.
  \end{gather*}
\end{lem}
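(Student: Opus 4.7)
The plan is to handle (ii) $\Rightarrow$ (i) directly, then build the topometric structure in (i) $\Rightarrow$ (ii) via the explicit formula, and conclude with uniqueness.

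For (ii) $\Rightarrow$ (i), preciseness of $\pi$ yields $d_Y([x],[y]) = d_X(x,[y])$, and symmetry of $d_Y$ rewrites the left side as $d_X([x],y)$, giving the metric equation. Closedness of $\sim$ follows from $\sim = (\pi\times\pi)^{-1}(\Delta_Y)$, where $\Delta_Y$ is closed because $Y$ is Hausdorff by \fref{lem:LowerSemiContinuity}.

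For (i) $\Rightarrow$ (ii), I would define $d_Y$ by the displayed formula and take $\sT_Y$ to be the quotient topology. Since $\sim$ is closed, each class $[x]$ is closed in $X$ and hence compact, and $Y$ is Hausdorff by the standard fact that a compact Hausdorff space quotiented by a closed equivalence relation is Hausdorff. To verify $d_Y$ is a metric: symmetry is immediate; if $d_Y([x],[y]) = 0$ then \fref{lem:DistFromCompact} attains the infimum at some $(x_0,y_0) \in [x]\times[y]$, forcing $x_0 = y_0$ and $[x] = [y]$. The triangle inequality is the main obstacle: given $a = d_X([x],[y])$ and $b = d_X([y],[z])$, I pick $(x',y') \in [x]\times[y]$ realising $a$ and $(y_1,z_0) \in [y]\times[z]$ realising $b$ via \fref{lem:DistFromCompact}; then hypothesis (i) gives $d_X([x],y_1) = d_X(x',[y_1]) = d_X(x',[y]) \leq d_X(x',y') = a$, and \fref{lem:DistFromCompact} again produces $x_1 \in [x]$ with $d_X(x_1,y_1) \leq a$, whence $d_X([x],[z]) \leq d_X(x_1,z_0) \leq a + b$. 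This is the only place where the symmetric form of (i) is essentially used.

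The remaining topometric axioms are straightforward. Lower semi-continuity of $d_Y$ follows from the identity $\{([x],[y]): d_Y([x],[y]) \leq r\} = (\pi\times\pi)(\{(u,v): d_X(u,v)\leq r\})$, using \fref{lem:DistFromCompact} to see the infimum defining $d_Y$ is attained; the right-hand side is the continuous image of a closed (hence compact) set, therefore closed in Hausdorff $Y^2$. That the metric refines the topology then follows from \fref{lem:LowerSemiContinuity} since $Y$ is compact. Preciseness is a direct consequence of (i): for any $y' \in [y]$ condition (i) gives $d_X([x],y') = d_X(x,[y'])= d_X(x,[y])$, so this quantity is independent of $y' \in [y]$, and taking the infimum yields $d_Y([x],[y]) = d_X([x],[y]) = d_X(x,[y])$. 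Then \fref{lem:PreciseMapping} shows that $\pi$ is an epimorphism, i.e., a quotient mapping.

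For uniqueness, compactness of $X$ together with $Y$ Hausdorff forces the topology on $Y$ to be the quotient topology, while preciseness of $\pi$ forces $d_Y(\pi(x),\pi(y)) = d_X(x,[y])$, which we have just shown equals $d_X([x],[y])$. The main difficulty I anticipate throughout is the triangle inequality step, where the symmetry built into hypothesis (i) must be leveraged by a judicious double application of \fref{lem:DistFromCompact} to shift the witness through a suitable point of $[y]$; everything else amounts to routine verification.
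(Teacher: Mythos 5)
Your proof is correct and follows essentially the same route as the paper: establish via Lemma~\ref{lem:DistFromCompact} and hypothesis (i) that $d_X([x],[y]) = d_X(x,[y]) = d_X([x],y)$, deduce the metric axioms and preciseness from this identity, obtain lower semi-continuity of $d_Y$ as the image of $d_X^{-1}([0,r])$ under $\pi\times\pi$ (compact, hence closed), and note that the converse direction and uniqueness are immediate from preciseness and symmetry. The only cosmetic difference is that you spell out the triangle inequality with explicit witnesses where the paper compresses it into one line.
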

\begin{proof}
  Assume first $\sim$ is closed and $d_X(x,[y]) = d_X([x],y)$.
  Then $Y$ is compact and Hausdorff, and that each equivalence class
  is compact.
  By \fref{lem:DistFromCompact}, for all $x,y \in X$ there are
  $x' \in [x]$ and $y' \in [y]$ such that
  $d_X([x],[y]) = d_X(x',y')$.
  We obtain that $[x] \neq [y] \Longrightarrow d_X([x],[y]) > 0$ and
  $d_X([x],[y]) = d_X(x',[y])
  = d_X([x],y) = d_X(x,[y])$, whereby
  $d_X([x],[z]) \leq d_X([x],y) + d_X(y,[z])
  = d_X([x],[y]) + d_X([y],[z])$.
  Thus $d_Y([x],[y]) = d_X([x],[y])$ is a metric.

  For all $r \in \bR^+$ the set
  $\{(x,y)\colon d_X(x,[y]) \leq r\} \subseteq X^2$ is closed as the projection
  on the first and last coordinate of
  $\{(x,y,z)\colon d_X(x,y) \leq r \text{ and } y\sim z\} \subseteq X^3$.
  Thus $\{([x],[y])\colon d_Y([x],[y]) \leq r\} \subseteq Y^2$
  is compact and therefore
  closed, and $(Y,\sT_Y,d_Y)$ is a topometric structure.
  The property $d_Y([x],[y]) = d_X(x,[y])$ also implies that
  $\pi$ is precise.

  Conversely, assume that $\pi$ is precise.
  Then it is immediate that the topometric structure on $Y$ is as
  prescribed in the moreover part, and that for all $x,y \in X$:
  \begin{gather*}
    d_X(x,[y]) = d_Y([x],[y]) = d_X([x],y).
    \qedhere
  \end{gather*}
\end{proof}

\begin{lem}
  \label{lem:PrecQuotRel2}
  Let $\sim$ be a closed equivalence relation on $X$.
  Then the following are equivalent:
  \begin{enumerate}
  \item For all $x,y \in X$: $d(x,[y]) = d([x],y)$.
  \item For every class $[x] \subseteq X$ and $r \in \bR^+$:
    ${\overline B}([x],r)$ is closed under $\sim$.
  \item For every closed set $F \subseteq X$ and $r \in \bR^+$, if $F$ is closed
    under $\sim$ then so is ${\overline B}(F,r)$.
  \item \label{item:PrecQuotRel2Family}
    There exists a family $B$ of closed sets in $X$ such that:
    \begin{enumerate}
    \item $B$ is closed under finite intersections.
    \item For all $x \not\sim y$ there is $F \in B$ such that
      $x \in F$, $y \notin F$.
    \item For all $F \in B$ and $r \in \bQ^+$: ${\overline B}(F,r)$ is closed under
      $\sim$.
    \end{enumerate}
  \end{enumerate}
\end{lem}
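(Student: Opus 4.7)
I plan to establish the cycle (i) $\Leftrightarrow$ (ii) $\Rightarrow$ (iii) $\Rightarrow$ (iv) $\Rightarrow$ (ii), with the substantive work concentrated in the last implication.

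The equivalence (i) $\Leftrightarrow$ (ii) will hinge on the observation that (ii) is just the assertion that the function $x \mapsto d(x,[y])$ is constant on every $\sim$-class: the equivalence $x \in {\overline B}([y],r) \Longleftrightarrow x' \in {\overline B}([y],r)$ for $x \sim x'$ unpacks to $d(x,[y]) = d(x',[y])$, whence $d(x,[y]) = \inf_{x' \sim x} d(x',[y]) = d([x],[y])$; symmetrising over $y$ yields (i), and the reverse direction is a direct unpacking. For (ii) $\Rightarrow$ (iii), given a closed $\sim$-invariant $F$, $y \in {\overline B}(F,r)$, and $y' \sim y$, I will use \fref{lem:DistFromCompact} to pick $x_0 \in F$ with $d(y,x_0) = d(y,F) \leq r$, so that $y \in {\overline B}([x_0],r)$; condition (ii) then gives $y' \in {\overline B}([x_0],r) \subseteq {\overline B}(F,r)$ since $[x_0] \subseteq F$. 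The implication (iii) $\Rightarrow$ (iv) is immediate: take $B$ to be the family of all closed $\sim$-invariant subsets of $X$; this is closed under intersections, and each class $[x]$ is closed (as the fibre of the closed relation $\sim$ over $x$) and therefore witnesses the separation property (iv)(b).

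The main obstacle is (iv) $\Rightarrow$ (ii). Fix a class $[x_0]$ and $r \in \bR^+$, let $y \in {\overline B}([x_0],r)$ and $y' \sim y$; I aim to produce some $x_2 \in [x_0]$ with $d(y',x_2) \leq r$. Using \fref{lem:DistFromCompact} I select $x_1 \in [x_0]$ with $d(y,x_1) \leq r$, and consider the filter base $\cF = \{F \in B : x_1 \in F\}$, which is downward-directed by (iv)(a). For any $F \in \cF$ and rational $r' > r$, property (iv)(c) at radius $r'$ together with $y \in {\overline B}(F,r')$ yields $y' \in {\overline B}(F,r')$; letting $r' \searrow r$ along the rationals gives $d(y',F) \leq r$, so by \fref{lem:DistFromCompact} there is $z_F \in F$ with $d(y',z_F) \leq r$. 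The closed sets $K_F = F \cap {\overline B}(y',r)$ then form a downward-directed family of nonempty closed subsets of the compact space $X$, so $\bigcap_{F \in \cF} K_F$ contains some point $x_2$. By construction $d(y',x_2) \leq r$ and $x_2$ lies in every $F \in B$ that contains $x_1$; the separation property (iv)(b) now forces $x_2 \sim x_1$, since any witness $F \in B$ with $x_1 \in F$ and $x_2 \notin F$ would belong to $\cF$ and contradict $x_2 \in F$. Hence $x_2 \in [x_0]$, giving $d(y',[x_0]) \leq r$ as required. The crux of the argument is thus this compactness-plus-FIP step, which converts the abstract ``separating base'' property (iv)(b) into the concrete geometric invariance (ii).
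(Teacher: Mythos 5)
Your proposal is correct and takes essentially the same approach as the paper: the same cycle of implications (merely closed at (ii) instead of (i)), the same appeal to \fref{lem:DistFromCompact} for (ii)$\Rightarrow$(iii), and the same closed-under-finite-intersections-plus-compactness argument for the hard final step, phrased as extracting a witness point from a directed intersection rather than separating a closed ball from a class. The restriction of (iv)(c) to rational radii is handled correctly by passing to $r' \searrow r$.
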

\begin{proof}
  \begin{cycprf}
  \item[\impnext] For all $y\sim y'$ we have
    $d(y,[x]) = d([y],x) = d(y',[x])$, so one belongs to
    ${\overline B}([x],r)$ if and only if the other does.
  \item[\impnext] As $F$ and each $[x]$ are compact and closed under
    $\sim$, we have from \fref{lem:DistFromCompact} that
    ${\overline B}(F,r) = \bigcup_{x \in F} B(x,r) = \bigcup_{x \in F} B([x],r)$.
  \item[\impnext] Immediate.
  \item[\impfirst]
    It is enough to show that $d(x,[y]) \geq d([x],y)$.
    Let $B$ be a family as in the assumption, and assume that
    $d(y,[x]) > r \in \bQ^+$.
    Then we can find a family $B_0 \subseteq B$ closed under finite
    intersections such that $\bigcap B_0 = [x]$.
    As ${\overline B}(y,r) \cap [x] = \emptyset$, we obtain by compactness that
    ${\overline B}(y,r) \cap F = \emptyset$ for some $F \in B_0$.
    Then $y \notin {\overline B}(F,r)$,
    and since the latter is closed under $\sim$:
    $[y] \cap {\overline B}(F,r) = \emptyset$.
    In particular $d([y],x) > r$.
  \end{cycprf}
\end{proof}

Let $\fA = C(X,[0,1])$.
It is known that the closed equivalence relations on $X$ are precisely
those of the form $\sim_\fB$ where
$\fB \subseteq \fA$ and $x \sim_\fB y \Longleftrightarrow (\forall f\in\fB)(f(x) = f(y))$.
In this case for each $f \in \fB$ there is a unique
$f' \in C(Y,[0,1])$ such that $f = f' \circ \pi$.
If $\fB$ is closed under $\lnot$, $\half$ and $\dotminus$, then the set
$\fB' = \{f'\colon f \in \fB\}$ dense in uniform convergence in $C(Y,[0,1])$.

\begin{lem}
  \label{lem:PrecQuotRel3}
  Let $\fB \subseteq \fA$, and assume $\fB$ is closed under
  $\lnot$, $\half$ and $\dotminus$.
  Then the following are equivalent:
  \begin{enumerate}
  \item For all $x,y \in X$: $d(x,[y]) = d([x],y)$.
  \item For all $r \in \bQ^+$, $f \in \fB$:
    ${\overline B}(f^{-1}(\{0\},r))$ is closed under $\sim_\fB$.
  \end{enumerate}
\end{lem}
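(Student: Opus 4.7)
The plan is to deduce the equivalence from \fref{lem:PrecQuotRel2} via an appropriate family of closed sets. The forward direction (i) $\Rightarrow$ (ii) is essentially immediate: each $f \in \fB$ is $\sim_\fB$-invariant by definition of $\sim_\fB$, so $F = f^{-1}(\{0\})$ is closed and $\sim_\fB$-saturated, and under (i) \fref{lem:PrecQuotRel2} yields that ${\overline B}(F,r)$ is $\sim_\fB$-closed for every $r \in \bR^+$, in particular for $r \in \bQ^+$.

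For the converse (ii) $\Rightarrow$ (i), I plan to apply \fref{lem:PrecQuotRel2}(iv) with the family
\[
B = \{f^{-1}(\{0\}) : f \in \fB\}.
\]
The metric-neighbourhood clause of (iv) is then precisely the hypothesis (ii). Closure of $B$ under finite intersections reduces to the identity $(f\vee g)^{-1}(\{0\}) = f^{-1}(\{0\}) \cap g^{-1}(\{0\})$, and $\vee$ is available inside $\fB$ through $x \wedge y = x \dotminus (x \dotminus y)$ and $x \vee y = \lnot(\lnot x \wedge \lnot y)$, both built from the connectives $\lnot$ and $\dotminus$ to which $\fB$ is assumed closed.

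The main obstacle will be the separation clause of (iv). After dispatching the trivial case $\fB = \emptyset$ (in which both (i) and (ii) are vacuous), I would observe that $f \dotminus f$ is the constant $0$, so that $\lnot$, $\half$, and the truncated addition $\lnot(\lnot x \dotminus y)$ together generate every dyadic rational of $[0,1]$ as a constant function in $\fB$. Given $x \not\sim_\fB y$, some $f \in \fB$ separates them; after replacing $f$ by $\lnot f$ if necessary, I may assume $f(x) < f(y)$, and for any dyadic rational $c$ with $f(x) \leq c < f(y)$ the function $g = f \dotminus c \in \fB$ satisfies $g(x) = 0 < g(y)$, so $g^{-1}(\{0\}) \in B$ separates $x$ from $y$. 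With all three conditions of \fref{lem:PrecQuotRel2}(iv) verified, (i) follows.
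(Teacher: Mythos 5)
Your proof is correct and follows essentially the same route as the paper: the forward direction via \fref{lem:PrecQuotRel2} applied to the $\sim_\fB$-saturated zero sets, and the converse by verifying the three conditions of \fref{lem:PrecQuotRel2}\fref{item:PrecQuotRel2Family} for the family $B = \{f^{-1}(\{0\})\colon f\in\fB\}$, using $f\vee g$ for closure under intersections and $f\dotminus c$ with a dyadic constant $c$ for separation. Your explicit derivation of $\vee$ and of the dyadic constants from $\lnot$, $\half$ and $\dotminus$ only spells out what the paper leaves implicit.
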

\begin{proof}
  \begin{cycprf}
  \item[\impnext]
    Follows from \fref{lem:PrecQuotRel2}, as every set of the form
    $f^{-1}(\{0\})$ is closed in $X$ and closed under $\sim_\fB$.
  \item[\impfirst]
    Let $B = \{f^{-1}(\{0\})\colon f \in \fB\}$.
    For every $f,g \in \fB$ we also have $f\vee g \in \fB$, so $B$ is closed
    under finite intersections.
    If $x \not\sim_\fB y$, there is $f \in \fB$ such that
    $f(x) \neq f(y)$.
    Possibly replacing $f$ with $\lnot f \in \fB$ we have $f(x) < f(y)$.
    If $f(x) < s < f(y)$ and $s \in [0,1]$ is dyadic, then
    we can replace $f$ with $f \dotminus s \in \fB$, and have
    $f(x) = 0 < f(y)$.
    Thus $B$ satisfies the hypotheses of
    \fref{lem:PrecQuotRel2}\fref{item:PrecQuotRel2Family}.
  \end{cycprf}
\end{proof}

As we use continuous functions to prescribe equivalence relations, we
need to measure how many such functions are required to capture a
piece of information.

\begin{dfn}
  \label{dfn:DefComplex}
  \begin{enumerate}
  \item
    The \emph{definition complexity} $\defcomp(K)$
    of a  closed subset $K \subseteq X$ is the smallest
    infinite cardinal $\kappa$ such that there exists a family of
    continuous functions
    $\{f_i\colon i<\kappa\} \subseteq \fA$ which define $K$ in the
    sense that $K = \bigcap_i f_i^{-1}(\{0\})$.
  \item The definition complexity
    of the metric $d$ is defined as:
    \begin{gather*}
      \defcomp(d) =
      \sup\{\defcomp({\overline B}(K,r))\colon r>0, \defcomp(K) = \aleph_0\}.
    \end{gather*}
  \end{enumerate}
\end{dfn}

Note that if $\defcomp(K) = \aleph_0$ then $K$ is the zero set
of a single continuous function.
In case $\defcomp(K) = \kappa > \aleph_0$ we can find $\{K_i\colon i < \kappa\}$
such that $\defcomp(K_i) = \aleph_0$, $K = \bigcap_i K_i$, and the family
$\{K_i\colon i < \kappa\}$ is closed under finite intersections.
It follows by compactness that
${\overline B}(K,r) = \bigcap_i {\overline B}(K_i,r)$.
We conclude that
$\defcomp({\overline B}(K,r)) \leq \defcomp(K) +
\defcomp(d)$.

\begin{thm}
  \label{thm:PrecQuot}
  Let $X$ be a compact topometric space,
  $\fB \subseteq \fA = C(X,[0,1])$.
  Then there exists
  $\fB \subseteq \fB' \subseteq \fA$ such that:
  \begin{enumerate}
  \item $|\fB'| \leq |\fB| + \defcomp(d)$.
  \item $Y = X/{\sim_{\fB'}}$ admits a topometric
    structure for which the projection $\pi\colon X\to Y$ is a precise topometric
    quotient map.
  \end{enumerate}
  Moreover, this quotient structure is unique given by the quotient
  topology and the metric:
  \begin{gather*}
    d_Y([x],[y]) = d_X([x],[y]) = \inf \{d_X(x',y')\colon x'\sim_{\fB'}x, y'\sim_{\fB'}y\}.
  \end{gather*}
\end{thm}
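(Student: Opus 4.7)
The plan is to construct $\fB'$ by iteratively closing $\fB$ both under the algebraic operations $\lnot$, $\half$ and $\dotminus$ (needed for \fref{lem:PrecQuotRel3}) and under adding witnesses to the closure of metric neighbourhoods of zero sets under the equivalence relation. Uniqueness of the quotient topometric structure (the ``moreover'' clause) is then immediate from \fref{lem:PrecQuotRel}.

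More precisely, define an increasing chain $\fB = \fB_0 \subseteq \fB_1 \subseteq \cdots \subseteq \fA$ as follows. First replace $\fB$ by its closure under $\lnot, \half, \dotminus$; this does not increase the cardinality beyond $|\fB|+\aleph_0$. Assuming $\fB_n$ has been defined, for each $f \in \fB_n$ and each $r \in \bQ^+$ the zero set $K_f = f^{-1}(\{0\})$ has $\defcomp(K_f) = \aleph_0$, so by the paragraph before the theorem, $\defcomp({\overline B}(K_f, r)) \leq \defcomp(d)$. Choose a family $\{g^{f,r}_i : i < \defcomp(d)\} \subseteq \fA$ defining ${\overline B}(K_f, r)$ as its common zero set, add all such families to $\fB_n$, and close under $\lnot, \half, \dotminus$ to obtain $\fB_{n+1}$. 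Then $|\fB_{n+1}| \leq |\fB_n| \cdot \aleph_0 \cdot \defcomp(d) = |\fB_n| + \defcomp(d)$. Finally set $\fB' = \bigcup_{n<\omega} \fB_n$; an easy induction yields $|\fB'| \leq |\fB| + \defcomp(d)$, and $\fB'$ is closed under $\lnot$, $\half$, $\dotminus$.

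The key verification is that $\fB'$ satisfies condition (ii) of \fref{lem:PrecQuotRel3}. Given $f \in \fB'$ and $r \in \bQ^+$, we have $f \in \fB_n$ for some $n$, so by construction there exist functions $\{g_i\}_{i<\defcomp(d)} \subseteq \fB_{n+1} \subseteq \fB'$ with ${\overline B}(f^{-1}(\{0\}), r) = \bigcap_i g_i^{-1}(\{0\})$. Since each $g_i$ belongs to $\fB'$, its value is preserved along the equivalence relation $\sim_{\fB'}$; hence so is membership in $\bigcap_i g_i^{-1}(\{0\})$, which is thus closed under $\sim_{\fB'}$. By \fref{lem:PrecQuotRel3} we obtain $d(x, [y]) = d([x], y)$ for all $x,y \in X$, and then \fref{lem:PrecQuotRel} produces the desired topometric structure on $Y = X/{\sim_{\fB'}}$, making $\pi$ a precise quotient map and giving the formula for $d_Y$ asserted in the ``moreover'' part; uniqueness is the corresponding part of \fref{lem:PrecQuotRel}.

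The main delicate point is the cardinality accounting during the iteration: one has to notice that the relation $\sim_{\fB'}$ depends on the \emph{whole} union, not on any single $\fB_n$, so condition (ii) of \fref{lem:PrecQuotRel3} for $f \in \fB_n$ must be witnessed by members of $\fB'$ rather than of $\fB_n$ — this is precisely what the passage from $\fB_n$ to $\fB_{n+1}$ provides. The fact that $\defcomp(d) \geq \aleph_0$ by definition absorbs the constant $\aleph_0$ factors appearing both in closing under the operations and in taking a countable union over $n$, keeping the final bound at $|\fB| + \defcomp(d)$.
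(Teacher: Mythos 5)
Your proof is correct and follows essentially the same route as the paper: close $\fB$ under the connectives $\lnot$, $\half$, $\dotminus$, iteratively adjoin families of functions whose common zero sets are the metric neighbourhoods ${\overline B}(f^{-1}(\{0\}),r)$ for $f$ already present, take the union over $\omega$ stages, and invoke \fref{lem:PrecQuotRel3} followed by \fref{lem:PrecQuotRel}. The cardinality bookkeeping and the observation that witnesses for $f \in \fB_n$ live in $\fB_{n+1} \subseteq \fB'$ (so that the zero sets in question are $\sim_{\fB'}$-invariant) match the paper's argument, with your write-up merely making explicit a verification the paper leaves implicit.
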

\begin{proof}
  For each $f \in \fA$ and $r \in \bQ^+$ choose a family
  $\fB_{f,r} \subseteq \fA$ such that $|\fB_{f,r}| \leq \defcomp(d)$ and
  ${\overline B}(f^{-1}(\{0\}),r) = \bigcap \{g^{-1}(\{0\})\colon g \in \fB_{f,r}\}$.

  Let $\fB_0$ be the closure of $\fB$ under $\lnot$, $\half$ and
  $\dotminus$, so $|\fB_0|\leq |\fB| + \aleph_0$.
  Given $\fB_n \subseteq \fA$ closed under these connectives, let
  $\fB_{n+1}$ be the closure under the connectives of
  $\fB_n\cup \bigcup_{f \in \fB_n,r \in \bQ^+} \fB_{f,r}$.

  Let $\fB' = \bigcup_n \fB_n$.
  Then $\fB \subseteq \fB' \subseteq \fA$, $|\fB'| \leq |\fB| + \defcomp(d)$, and
  $\fB'$ satisfies the hypotheses
  of \fref{lem:PrecQuotRel3}.
  We conclude using \fref{lem:PrecQuotRel}.
\end{proof}

\begin{cor}
  \label{cor:PrecQuot}
  Let $X$ be a compact topometric space,
  $\fB \subseteq \fA = C(X,[0,1])$.
  Then $X$ admits a precise topometric quotient
  $\pi\colon X \to Y$ such that each member of $\fB$ factors through $\pi$ and
  $\wt(Y) \leq |\fB| + \defcomp(d)$.
\end{cor}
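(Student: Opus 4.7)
The plan is to invoke \fref{thm:PrecQuot} and then translate its cardinality bound on $\fB'$ into a bound on $\wt(Y)$.

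First, I would apply \fref{thm:PrecQuot} to the given $\fB \subseteq \fA$ to obtain $\fB \subseteq \fB' \subseteq \fA$ with $|\fB'| \leq |\fB| + \defcomp(d)$, such that $Y := X/{\sim_{\fB'}}$ carries a topometric structure for which the projection $\pi\colon X \to Y$ is a precise quotient map. Since $\fB \subseteq \fB'$, the relation $\sim_{\fB'}$ refines $\sim_\fB$, so each $f \in \fB$ is constant on $\sim_{\fB'}$-equivalence classes and therefore factors uniquely through $\pi$ as $f = f' \circ \pi$ with $f' \in C(Y,[0,1])$.

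For the weight bound, I would inspect the construction in the proof of \fref{thm:PrecQuot}: the family $\fB'$ produced there is explicitly closed under $\lnot$, $\half$ and $\dotminus$. By the remark recorded just before \fref{lem:PrecQuotRel3}, the set $\fB'' := \{f' \colon f \in \fB'\} \subseteq C(Y,[0,1])$ is then dense in uniform convergence. In particular $\fB''$ separates the points of $Y$, so the evaluation map $Y \to [0,1]^{\fB''}$ given by $y \mapsto (f'(y))_{f' \in \fB''}$ is a continuous injection from a compact space into a Hausdorff space, hence a topological embedding. Consequently the topology on $Y$ is the initial topology induced by $\fB''$, so the preimages $(f')^{-1}([0,r))$ and $(f')^{-1}((r,1])$ for $f' \in \fB''$ and $r \in \bQ \cap [0,1]$ form a subbase; finite intersections then form a base of cardinality at most $|\fB''| + \aleph_0 \leq |\fB'| + \aleph_0 \leq |\fB| + \defcomp(d)$, the last inequality using that $\defcomp(d) \geq \aleph_0$ by definition.

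There is essentially no obstacle: the corollary is a direct unpacking of \fref{thm:PrecQuot}. The only point that needs explicit mention is that a uniformly dense (equivalently: separating) family of continuous $[0,1]$-valued functions on a compact Hausdorff space generates a base of the same cardinality, which follows from the standard embedding-into-a-cube argument above.
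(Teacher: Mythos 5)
Your proposal is correct and follows exactly the paper's route: the paper's proof is the one-liner ``Immediate from Theorem~\ref{thm:PrecQuot}, since $\wt(X/{\sim_{\fB'}}) \leq |\fB'| + \aleph_0 \leq |\fB| + \defcomp(d)$'', and your argument simply spells out the standard embedding-into-a-cube computation behind that weight bound (note that point-separation of $\fB''$ already follows directly from the definition of $\sim_{\fB'}$, so the appeal to uniform density is a harmless detour).
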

\begin{proof}
  Immediate from \fref{thm:PrecQuot}, since
  $\wt(X/{\sim_{\fB'}}) \leq |\fB'| + \aleph_0 \leq |\fB| + \defcomp(d)$.
\end{proof}

\begin{dfn}
  \label{dfn:EnoghQuot}
  Let $X$ be a compact topometric space.
  \begin{enumerate}
  \item A family $\cQ$ of (isomorphism classes of) quotients of $X$ is
    \emph{sufficient} if for every subset $\fB \subseteq C(X,[0,1])$ there is
    a quotient $(Y,\pi) \in \cQ$ (where $\pi\colon X\to Y$ is the quotient map)
    such that $\wt(Y) \leq |\fB| + \aleph_0$ and every member of $\fB$ factors
    via $\pi$.
  \item We say that $X$ has \emph{enough quotients} if $X$ admits a
    sufficient family of quotients.
  \item We say that $X$ has \emph{enough precise quotients} if $X$
    admits a sufficient family of precise quotients.
  \end{enumerate}
\end{dfn}

\begin{thm}
  \label{thm:EnoghQuot}
  Let $X$ be a topometric space.
  Then the following are equivalent:
  \begin{enumerate}
  \item $\defcomp(d_X) = \aleph_0$.
  \item The family of all precise quotients of $X$ is sufficient.
  \item $X$ has enough precise quotients.
  \end{enumerate}
\end{thm}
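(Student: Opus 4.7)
The plan is to prove the cycle \fref{thm:EnoghQuot}.(i) $\Longrightarrow$ \fref{thm:EnoghQuot}.(ii) $\Longrightarrow$ \fref{thm:EnoghQuot}.(iii) $\Longrightarrow$ \fref{thm:EnoghQuot}.(i), of which the only non-trivial direction is the last.

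For (i) $\Longrightarrow$ (ii): if $\defcomp(d_X) = \aleph_0$, then applying \fref{cor:PrecQuot} to any $\fB \subseteq C(X,[0,1])$ produces a precise topometric quotient $\pi\colon X\to Y$ with $\wt(Y) \leq |\fB|+\defcomp(d_X) = |\fB|+\aleph_0$ through which every element of $\fB$ factors; thus the family of all precise quotients is sufficient. The implication (ii) $\Longrightarrow$ (iii) is tautological.

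The crux is (iii) $\Longrightarrow$ (i). I would start with an arbitrary closed $K \subseteq X$ of definition complexity $\aleph_0$, which by the remark following \fref{dfn:DefComplex} can be written as the zero set $f^{-1}(\{0\})$ of a single $f \in C(X,[0,1])$, and fix $r > 0$. Applying sufficiency to $\fB = \{f\}$ yields a precise quotient $\pi\colon X \to Y$ with $\wt(Y) \leq \aleph_0$ and a factorisation $f = f' \circ \pi$ for some $f' \in C(Y,[0,1])$. Setting $K_Y = f'^{-1}(\{0\})$, so that $K = \pi^{-1}(K_Y)$, the key computation uses preciseness: for every $x \in X$,
\[
  d_X(x,K) = \inf_{y\in K_Y} d_X\bigl(x,\pi^{-1}(y)\bigr) = \inf_{y\in K_Y} d_Y(\pi(x),y) = d_Y(\pi(x),K_Y),
\]
so ${\overline B}(K,r) = \pi^{-1}\bigl({\overline B}(K_Y,r)\bigr)$. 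Since $Y$ is compact Hausdorff with countable weight it is metrisable, and in a compact metrisable space every closed set is the zero set of a single continuous $[0,1]$-valued function (for instance, the distance to it in a compatible metric). Writing ${\overline B}(K_Y,r) = g^{-1}(\{0\})$ with $g \in C(Y,[0,1])$ gives ${\overline B}(K,r) = (g\circ\pi)^{-1}(\{0\})$, whence $\defcomp({\overline B}(K,r)) = \aleph_0$. As $K$ and $r$ were arbitrary, $\defcomp(d_X) = \aleph_0$.

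The main obstacle is to see that preciseness of the quotient forces ${\overline B}(K,r)$ to be literally the pullback of the corresponding closed ball in $Y$; once this is in hand, the metrisability of $Y$ (a consequence of countable weight) disposes of the rest with no additional work.
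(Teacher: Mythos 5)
Your proposal is correct and follows essentially the same route as the paper: the forward implications via \fref{cor:PrecQuot}, and for (iii) $\Longrightarrow$ (i) the pullback of a countable-weight precise quotient, using preciseness to identify ${\overline B}(K,r)$ with $\pi^{-1}\bigl({\overline B}(K_Y,r)\bigr)$ and metrisability of $Y$ to realise the latter ball as a zero set. The only difference is that you spell out the preciseness computation $d_X(x,K) = d_Y(\pi(x),K_Y)$ explicitly, which the paper leaves implicit.
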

\begin{proof}
  \begin{cycprf}
  \item[\impnext] By \fref{cor:PrecQuot}.
  \item[\impnext] Immediate.
  \item[\impfirst] Let $K \subseteq X$ be a zero set, $K' = {\overline B}(K,r)$,
    and we need to show that $\defcomp(K') = \aleph_0$ as well.
    Say that $K = f^{-1}(0)$, and let $\pi\colon X\to Y$ be a precise quotient
    such that $\wt(Y) = \aleph_0$ and $f = f' \circ \pi$.
    Let $\bar K = {f'}^{-1}(0)$, $\bar K' = {\overline B}(\bar K,r)$.
    Then $K = \pi^{-1}(\bar K)$, so $K' = \pi^{-1}(\bar K')$ by
    preciseness.
    On the other hand, as $\wt(Y) = \aleph_0$ every closed set is a zero
    set, so say $\bar K' = g^{-1}(0)$.
    Then $K' = (g\circ\pi)^{-1}(0)$, as desired.
  \end{cycprf}
\end{proof}

The topometric spaces we are interested in are type spaces,
with either the standard metric or some other (e.g., perturbation)
metric.
Such spaces almost always have enough precise quotients.
\begin{prp}
  \label{prp:CtblDefComp}
  \begin{enumerate}
  \item Let $T$ be a theory in a language of arbitrary size.
    Let $d'$ be a metric on $\tS_n(T)$,
    and assume that for every $r$ the set of $2n$-tuples
    $\{(\bar a,\bar b)\colon d'(\tp(\bar a),\tp(\bar b)) \leq r\}$
    is type-definable using only countably many symbols
    from the language.
    Then $(\tS_n(T),d')$ has enough precise quotients.
  \item Let $T$ be a theory in a language of arbitrary size.
    Then $(\tS_n(T),d)$ has enough precise quotients where $T$ is the
    standard metric.
    (Since we may name parameters in the language, this also applies
    to $\tS_n(A)$ for any set of parameters $A$.)
  \item Let $M$ be an $\aleph_1$-saturated and strongly $\aleph_1$-homogeneous
    structure in a countable language, and let $d'$ be
    a lower semi-continuous metric on $\tS_n(M)$ invariant under
    the action of $\Aut(M)$.
    Then $(\tS_n(M),d')$ has enough precise quotients.
  \item Let $T$ be a theory in a language of arbitrary size,
    $\varphi(\bar x,\bar y)$ a formula, $M$ a model.
    Then $(\tS_\varphi(M),d_\varphi)$ has enough precise quotients where $d_\varphi$ is
    the standard metric on $\tS_\varphi(M)$.
  \end{enumerate}
\end{prp}
\begin{proof}
  For the first item, let $K \subseteq \tS_n(T)$ be a zero set.
  Then $K$ can be defined using countably many symbols from the
  language, so ${\overline B}(K,r)$ can also be defined using countably many
  symbols and is therefore a zero set as well.
  It follows that $\defcomp(d') = \aleph_0$, and conclude
  using \fref{thm:EnoghQuot}.
  The second and third items are special cases of the first one.

  For the last item, even though we allow a language of arbitrary size
  we may replace it with a countable sub-language containing all
  symbols appearing in $\varphi$.
  By downward L\"owenheim-Skolem,
  the family of all spaces $\tS_\varphi(M')$ where
  $M' \preceq M$ is a sufficient family of precise quotients.
\end{proof}

\section{$d$-isolation}
\label{sec:Isol}

Usually we do not expect to find topologically isolated points in a
non-maximal topometric space, since by definition the topology cannot be
stronger than the metric.
Instead, we define a notion of isolation relative to the metric by
requiring the topology to be as strong as possible around a point,
i.e., to coincide with the metric:
\begin{dfn}
  \begin{enumerate}
  \item A point $x \in X$ is \emph{$d$-isolated} if the topology and the
    metric agree near $x$, i.e., if $x \in B(x,r)^\circ$ for all $r > 0$.
  \item It is \emph{weakly $d$-isolated} if we only have
    $B(x,r)^\circ \neq \emptyset$ for all $r > 0$.
  \end{enumerate}
\end{dfn}

The need for two notions of isolation may be bothering.
Indeed, in the case of the standard metric on type spaces
$d$-isolation and weak $d$-isolation are equivalent
(see \cite[Fact~1.8]{BenYaacov-Usvyatsov:dFiniteness}).
On the other hand, the distinction is unavoidable in some cases, e.g.,
that of perturbation metrics, and each notion plays its own role.

For example, we have:
\begin{lem}
  \label{lem:WIsolClsd}
  The set of weakly $d$-isolated points in a topometric space $X$ is
  metrically closed.
\end{lem}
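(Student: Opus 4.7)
The plan is to unwind what membership in the metric closure gives us, and then exploit the triangle inequality to transfer a nonempty interior from one ball to another. Write $I$ for the set of weakly $d$-isolated points, and let $y$ belong to the metric closure of $I$. Fix $r > 0$; the goal is to show $B(y,r)^\circ \neq \emptyset$.

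Since $y$ is in the metric closure of $I$, I can pick some $x \in I$ with $d(x,y) < r/2$. Because $x$ is weakly $d$-isolated, $B(x,r/2)^\circ$ is a nonempty open set. The triangle inequality gives $B(x,r/2) \subseteq B(y,r)$: indeed, for any $z$ with $d(z,x) < r/2$ we have $d(z,y) \leq d(z,x) + d(x,y) < r/2 + r/2 = r$. Since $B(x,r/2)^\circ$ is open and contained in $B(y,r)$, it is contained in $B(y,r)^\circ$, which is therefore nonempty. As $r > 0$ was arbitrary, $y \in I$.

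The only place where I needed anything beyond pure metric manipulation is the fact that an open set contained in $B(y,r)$ lies in $B(y,r)^\circ$, which is tautological. No obstacle is expected; the argument is essentially the observation that weak $d$-isolation is a purely metric condition involving the topology only through ``some open set sits inside this ball,'' and such a condition is obviously stable under small metric perturbations of the center.
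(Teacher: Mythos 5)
Your proof is correct and is essentially identical to the paper's: both pick a weakly $d$-isolated point within $r/2$ of the given point and observe that the nonempty open set $B(x,r/2)^\circ$ sits inside $B(y,r)$, hence inside $B(y,r)^\circ$. Nothing further to add.
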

\begin{proof}
  Let $x \in X$, and assume that $B(x,r)$ contains a weakly $d$-isolated
  point for all $r > 0$.
  This means that $B(x,r/2)$ contains a weakly $d$-isolated point
  $x_r$, and $B(x_r,r/2)^\circ \neq \emptyset$.
  Thus $B(x,r)^\circ \neq \emptyset$ for all $r > 0$, and $x$ is weakly $d$-isolated
  as well.
\end{proof}
In particular, a metric limit of $d$-isolated points is weakly
$d$-isolated, and we do not know in general that it is $d$-isolated.

One can push the notion of weak $d$-isolation a bit further, allowing
us to improve the previous observation a little.
Since the usefulness of this exercise is not clear we do it
briefly.
Define the \emph{weak $d$-isolation rank} of a point as follows:
the rank of every point is at least zero, and the rank of $x$ is at
least $\alpha + 1$ if $B(x,r)^\circ$ contains a point of rank $\alpha$ for every
$r > 0$.
Thus a point is weakly $d$-isolated if and only if it has weak
$d$-isolation rank one or more, and every truly $d$-isolated point has
rank $\infty$.
\begin{lem}
  \label{lem:WIsolRank}
  \begin{enumerate}
  \item The set of all points of weak $d$-isolation rank $\geq \alpha$ is
    metrically closed.
  \item In a complete topometric space
    the set of point of weak $d$-isolation rank $\infty$ is
    precisely the metric closure of the set of
    $d$-isolated points.
  \end{enumerate}
\end{lem}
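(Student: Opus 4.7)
The plan is to handle (i) and (ii) separately. For (i), I would proceed by transfinite induction on $\alpha$, mimicking the argument in \fref{lem:WIsolClsd}. The base case $\alpha = 0$ is trivial since every point has rank $\geq 0$. At a successor step $\alpha = \beta + 1$, given $x$ a metric accumulation point of the set of points of rank $\geq \beta + 1$, pick for each $r > 0$ a point $x_r$ of rank $\geq \beta + 1$ with $d(x, x_r) < r/2$; by the rank definition $B(x_r, r/2)^\circ$ contains a point of rank $\geq \beta$, which lies in $B(x, r)^\circ$ by the triangle inequality, so $x$ has rank $\geq \beta + 1$. Limit ordinals are handled by intersecting closed sets.

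For (ii), the inclusion from right to left is easy: a $d$-isolated point has rank $\infty$ (trivial induction using $x \in B(x, r)^\circ$), and the rank-$\infty$ set is metrically closed by (i). For the reverse inclusion, the crucial tool is that the decreasing chain of closed sets indexed by rank must stabilize at some ordinal $\alpha_0$: the rank-$\infty$ set equals the rank-$\geq \alpha_0$ set and the rank-$\geq \alpha_0 + 1$ set. Unpacking this yields self-density: every rank-$\infty$ point $x$ satisfies $B(x, r)^\circ \cap \{\text{rank }\infty\} \neq \emptyset$ for every $r > 0$.

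Given such $x$ and $\varepsilon > 0$, I would build recursively a Cauchy sequence $(x_n)$ of rank-$\infty$ points with $x_0 = x$, together with a nested sequence of $\tau$-open sets $V_0 \supseteq V_1 \supseteq \cdots$ with $x_n \in V_n$. At stage $n$, using that the metric refines the topology, fix $\rho_n > 0$ with $B(x_n, \rho_n) \subseteq V_n$; set $\delta_n = \min(\rho_n/4, \delta_{n-1}/2, \varepsilon/2^{n+2})$ (forcing geometric decay), apply self-density to pick $x_{n+1} \in B(x_n, \delta_n)^\circ$ of rank $\infty$, and let $V_{n+1}$ be any open neighborhood of $x_{n+1}$ contained in $B(x_n, \delta_n)$ (which exists since $x_{n+1}$ lies in the topological interior of that metric ball).

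The crux is showing the metric limit $y = \lim x_n$ (which exists by completeness and lies in the rank-$\infty$ set by (i)) is $d$-isolated. Set $r_{n+1} = 2\delta_n$. Two triangle-inequality computations are key. First, ${\overline B}(x_{n+1}, r_{n+1}) \subseteq V_n$: for $z$ with $d(z, x_{n+1}) \leq r_{n+1}$, one has $d(z, x_n) \leq r_{n+1} + \delta_n = 3\delta_n < \rho_n$. Second, by lower semi-continuity of $d$ and the geometric tail estimate $d(x_{n+1}, y) \leq \sum_{k \geq n+1} \delta_k \leq 2\delta_{n+1} \leq \delta_n < r_{n+1}$, $y$ lies in ${\overline B}(x_{n+1}, r_{n+1}) \subseteq V_n$. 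Hence $y \in V_n$ for every $n \geq 1$, and a further triangle estimate gives $V_n \subseteq B(y, 3\delta_{n-1})$. Since $\delta_n \to 0$, $y$ has $\tau$-open neighborhoods inside arbitrarily small metric balls around itself, so $y$ is $d$-isolated, and $d(x, y) \leq \sum \delta_n \leq \varepsilon/2 < \varepsilon$. The main technical obstacle is the simultaneous bookkeeping of the radii so that both the closed-ball containment and the tail estimate hold together, which is precisely what the recursive definition of $\delta_n$ accomplishes.
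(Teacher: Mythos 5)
Your proof is correct and follows essentially the same route as the paper: item (i) by the same induction mimicking Lemma~\ref{lem:WIsolClsd}, and item (ii) by constructing a Cauchy sequence of rank-$\infty$ points with nested shrinking neighbourhoods whose limit is $d$-isolated. The only cosmetic difference is that the paper obtains the self-density of the rank-$\infty$ set directly (if $B(x_n,r_n)^\circ$ contained no such point, the ranks there would be bounded by an ordinal, contradicting $\mathrm{rank}(x_n)=\infty$) rather than via stabilization of the decreasing chain of closed sets, but these are the same observation.
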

\begin{proof}
  The first item is proved like \fref{lem:WIsolClsd}, and
  yields that every limit of $d$-isolated points has weak
  $d$-isolation rank $\infty$.
  Conversely, assume $x$ has weak $d$-isolation rank $\infty$.
  Fix $r > 0$, and let $x_0 = x$, $r_0 = r$.
  Given $x_n$ of weak $d$-isolation rank $\infty$ and $r_n > 0$
  the set $B(x_n,r_n)^\circ$ contains at least one point $x_{n+1}$ of weak
  $d$-isolation rank $\infty$ (as else the set of all ranks of points there
  is bounded by some ordinal).
  As the metric refines the topology $B(x_n,r_n)^\circ$ is
  metrically open, is there is $r_{n+1} > 0$ such that
  ${\overline B}(x_{n+1},r_{n+1}) \subseteq B(x_n,r_n)^\circ$,
  and we may further assume that $r_{n+1} < r_n/2$.
  The sequence $(x_n\colon n > \omega)$ is Cauchy and thus converges to some
  point $y$.
  We observe that by construction $x_m \in {\overline B}(x_n,r_n)$ for all
  $m < n$ whereby
  $y \in {\overline B}(x_{n+1},r_{n+1}) \subseteq B(x_n,r_n)^\circ$.
  In particular $d(x,y) < r$.
  More generally, for every $\varepsilon < 0$ there is $n$ such that
  $r_n < \varepsilon/2$ in which case
  $y \in B(x_n,r_n)^\circ \subseteq B(y,\varepsilon)^\circ$
  and $y$ is $d$-isolated.
\end{proof}

On the other hand, weak $d$-isolation does not seem to pass to
sub-spaces while full $d$-isolation does.

\begin{lem}
  \label{lem:IsolSubSpace}
  Let $X \subseteq Y$ be topometric spaces, where $X$ carries the induced
  structure from $Y$, and let $x \in X$ be $d$-isolated in $Y$.
  Then it is $d$-isolated in $X$.
\end{lem}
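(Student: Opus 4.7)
The plan is direct unwinding of definitions: since both the topology and the metric on $X$ are inherited from $Y$, a witness to $d$-isolation in $Y$ centered at a point $x \in X$ restricts to a witness in $X$.

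More precisely, let $r > 0$. Because $X$ carries the induced metric, we have the identity
\[
B_X(x,r) = B_Y(x,r) \cap X.
\]
By hypothesis $x$ is $d$-isolated in $Y$, so $x \in B_Y(x,r)^\circ$ (interior in $\sT_Y$). Choose an open set $U \in \sT_Y$ with $x \in U \subseteq B_Y(x,r)$. Because $X$ carries the induced topology, $U \cap X$ is open in $X$; and it contains $x$ and is contained in $B_Y(x,r)\cap X = B_X(x,r)$. Hence $x \in B_X(x,r)^\circ$, where now the interior is taken with respect to $\sT_X$. Since $r > 0$ was arbitrary, $x$ is $d$-isolated in $X$.

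I do not anticipate any obstacle: the argument is essentially a remark on the compatibility of the subspace topology with taking interiors of subsets of $X$. The only conceptual point worth noting (and the reason the analogous statement for \emph{weak} $d$-isolation fails, cf.\ the paragraph preceding the lemma) is that here the distinguished point $x$ itself lies in the open witness, so intersecting with $X$ cannot destroy the witness; for weak $d$-isolation the witnessing nonempty open subset of $B_Y(x,r)$ could be disjoint from $X$, and intersection with $X$ could collapse it to the empty set.
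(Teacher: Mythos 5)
Your proof is correct and is essentially identical to the paper's: both take an open set $U$ witnessing $x \in B_Y(x,r)^\circ$ and observe that $U \cap X$ is the required open witness in the subspace. Your closing remark on why the argument fails for weak $d$-isolation is a nice bonus, consistent with the paper's surrounding discussion.
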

\begin{proof}
  Let $r > 0$ and $U = B_Y(x,r)^\circ$.
  Then $x \in U\cap X$ by assumption, $U\cap X$ is open in $X$ and
  $U\cap X \subseteq B_X(x,r)$.
\end{proof}

Recall that a mapping of topological spaces $f\colon X\to Y$ is \emph{open}
if the image of every open set is open.
Say it is \emph{weakly open} if the image of every non-empty open set
has non-empty interior.
Then we have the following:
\begin{thm}
  \label{thm:IsolTower}
  Let $\pi\colon X \to Y$ be a morphism between locally compact
  topometric spaces.
  Let $x \in X$, $y = \pi(x) \in Y$, and $Z = \pi^{-1}(y)$ the fibre over $y$
  with the induced topometric structure.
  Then:
  \begin{enumerate}
  \item \label{item:IsolTowerDownFibre}
    If $x$ is $d$-isolated in $X$ then it is $d$-isolated in $Z$.
  \item \label{item:IsolTowerDownImage}
    If $x$ is (weakly) $d$-isolated in $X$
    and $\pi$ is (weakly) open then $y$ is (weakly) $d$-isolated in
    $Y$.
  \item  \label{item:IsolTowerUp}
    If $\pi$ is an epimorphism, $x$ is (weakly) $d$-isolated in $Z$
    and $y$ (weakly) $d$-isolated in $Y$ then $x$ is (weakly)
    $d$-isolated in $X$.
  \end{enumerate}
\end{thm}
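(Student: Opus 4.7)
The plan is to treat the three parts in order of increasing difficulty.

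For \fref{item:IsolTowerDownFibre}, the fibre $Z \subseteq X$ carries the induced topometric structure, so the conclusion follows directly from \fref{lem:IsolSubSpace}.

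For \fref{item:IsolTowerDownImage}, I would fix $\varepsilon > 0$ and use weak local uniform continuity of $\pi$ at $x$: there is an open neighbourhood $V_x \ni x$ and $\delta > 0$ such that $x_1, x_2 \in V_x$ with $d_X(x_1, x_2) < \delta$ forces $d_Y(\pi(x_1), \pi(x_2)) \leq \varepsilon$. In the strong case, $d$-isolation of $x$ provides an open $W$ with $x \in W \subseteq V_x \cap B_X(x, \delta)$, so $\pi(W) \subseteq {\overline B}(y, \varepsilon)$; openness of $\pi$ then makes $\pi(W)$ an open neighbourhood of $y$ inside $B_Y(y, 2\varepsilon)$. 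In the weak case, I first use that the metric refines the topology to get $B_X(x, \rho) \subseteq V_x$ for some $\rho > 0$, then pick a non-empty open $W \subseteq B_X(x, \min(\delta, \rho))$ provided by weak $d$-isolation of $x$, and apply weak openness of $\pi$ to obtain $\pi(W)^\circ \neq \emptyset$ inside $B_Y(y, 2\varepsilon)$.

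For \fref{item:IsolTowerUp} in the strong case, given $r > 0$: I would use $d$-isolation of $x$ in $Z$ to pick an open $W \subseteq X$ with $x \in W$ and $W \cap Z \subseteq B_Z(x, r/3)$. Since the metric refines the topology, $B_Z(x, \rho) \subseteq W \cap Z$ for some $\rho > 0$ (which we may assume satisfies $\rho \leq r/3$), giving $d_Z(x, Z \setminus W) \geq \rho$. Applying the epimorphism property with $\varepsilon := \rho/3$ yields $\delta > 0$, and $d$-isolation of $y$ provides an open $V \ni y$ with $V \subseteq B_Y(y, \delta)$. By lower semi-continuity of $d_X$ applied to the closed set $Z \setminus W \subseteq X$, the set $N := \{x' \in X : d_X(x', Z \setminus W) > \rho/2\}$ is an open neighbourhood of $x$. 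On $U := W \cap \pi^{-1}(V) \cap N$, the epimorphism gives $d_X(x', Z) \leq \rho/3$, so for small $\eta > 0$ some $z \in Z$ satisfies $d_X(x', z) < \rho/3 + \eta < \rho/2$; membership in $N$ forces $z \in W \cap Z$, hence $d_Z(z, x) < r/3$, and the triangle inequality yields $d_X(x', x) < \rho/2 + r/3 \leq r/2 < r$, so $U \subseteq B_X(x, r)$.

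The main obstacle is the weak case of \fref{item:IsolTowerUp}: weak $d$-isolation need not place $x$ in $W \cap Z$ nor $y$ in $V$, so the naive set $W \cap \pi^{-1}(V)$ may be empty. I would bypass this by selecting a witness $x^* \in B_Z(x, r/8)^\circ$ via weak $d$-isolation in $Z$ and repeating the separation trick at $x^*$: an open $W$ around $x^*$ with $W \cap Z \subseteq B_Z(x^*, r/4)$ gives $d_Z(x^*, Z \setminus W) \geq \rho > 0$. Weak $d$-isolation of $y$ then provides open $V \subseteq B_Y(y, \delta/2)$ for $\delta$ from the epimorphism at $\varepsilon := \rho/3$, and the morphism property at $x^*$ combined with local compactness of $X$ should produce a non-empty open $U \subseteq W \cap \pi^{-1}(V) \cap \{x' : d_X(x', Z \setminus W) > \rho/2\}$ which, by the triangle inequality through $x^*$, lies inside $B_X(x^*, r/2) \subseteq B_X(x, r)$. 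The key difficulty is that we do not have openness of $\pi$ at our disposal here, so the extraction of a non-empty open set must rely on a more delicate combination of local compactness, the morphism property at $x^*$, and the uniform lifting built into the epimorphism property.
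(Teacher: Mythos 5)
Parts \fref{item:IsolTowerDownFibre} and \fref{item:IsolTowerDownImage} of your proposal are correct; your argument for \fref{item:IsolTowerDownImage} via weak local uniform continuity is a legitimate variant of the paper's, which instead first replaces $X$ by a compact neighbourhood of $x$ so as to have full uniform continuity. The substance of the theorem is \fref{item:IsolTowerUp}, and there you have two problems. The smaller one is in your strong case: you need $N=\{x'\colon d_X(x',Z\setminus W)>\rho/2\}$ to be open, i.e., ${\overline B}(Z\setminus W,\rho/2)$ to be closed. Lower semi-continuity of $d$ only guarantees closed metric neighbourhoods for \emph{compact} sets (\fref{lem:ClsdMetNeighb}); $Z\setminus W$ is closed but in a merely locally compact space need not be compact, and \fref{lem:TopometricDef} indicates that closed metric neighbourhoods of closed sets can fail without compactness. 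The paper avoids this by first passing to a compact neighbourhood of $x$, and moreover fattens a compact set sitting \emph{inside} the open set (it chooses open $V'$ with $\overline{V'}$ compact and contained in the relevant open set, then finds $s>0$ with ${\overline B}(\overline{V'},s)$ still inside) rather than taking the complement of a fattening of the closed complement. One of these repairs is needed.

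The more serious issue is that you have not proved the weak case of \fref{item:IsolTowerUp}: the non-emptiness of your candidate open set is precisely the crux, and you leave it at ``should produce.'' In fact no openness of $\pi$ and nothing more delicate than what you already deployed is required. Pick any $y'$ in the non-empty open set $V\subseteq B_Y(y,\delta)$ furnished by weak $d$-isolation of $y$. Since $\pi(x^*)=y$ and $d_Y(\pi(x^*),y')<\delta$, the epimorphism property gives $d_X(x^*,\pi^{-1}(y'))\leq\varepsilon$; hence (exactly, by \fref{lem:DistFromCompact} applied to the compact fibre after the reduction to compact $X$, or approximately, which suffices) some $z'\in\pi^{-1}(y')$ lies in ${\overline B}(x^*,\varepsilon)$, which for suitably small $\varepsilon$ is contained in $W\cap N$; thus $z'\in U=W\cap\pi^{-1}(V)\cap N$ and $U\neq\emptyset$. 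In other words, the same uniform lifting you use to prove $U\subseteq B_X(x,r)$ also shows that every fibre over $V$ meets $U$. This is exactly how the paper concludes; with that observation and the compactness repair above, your outline closes up and coincides with the paper's argument in all essentials.
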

\begin{proof}
  There is no harm in replacing $X$ with a compact neighbourhood of
  $x$ and replacing $Y$ with its image, so we may assume all spaces
  are compact, and thus that $\pi$ is uniformly continuous.
  Let $\Delta\colon (0,\infty) \to (0,\infty)$ be a uniform continuity modulus for $\pi$,
  meaning that for all $\varepsilon > 0$, if $d_X(x',x'') < \Delta(\varepsilon)$
  then $d_Y(\pi(x'),\pi(x'')) \leq \varepsilon$.

  For \fref{item:IsolTowerDownFibre} just use \fref{lem:IsolSubSpace}.

  For \fref{item:IsolTowerDownImage}
  assume first that $x$ is weakly $d$-isolated and $\pi$ weakly open.
  Fix $r > 0$.
  Let $U = B_X(x,\Delta(r))^\circ$ in $X$.
  Then $U \neq \emptyset$, whereby $\emptyset \neq \pi(U)^\circ \subseteq B_Y(y,r)^\circ$ in $Y$.
  If $x$ is $d$-isolated and $\pi$ open we have furthermore that
  $x \in U$ and $y \in \pi(U) = \pi(U)^\circ$.

  We now prove \fref{item:IsolTowerUp}.
  As $\pi$ is assumed to be an epimorphism we may further assume that
  if $d_Y(y',\pi(x')) < \Delta(\varepsilon)$ then $d_X(\pi^{-1}(y'),x') \leq \varepsilon$.
  Assume that $x$ is weakly $d$-isolated in $Z$ and $y$ is
  weakly $d$-isolated in $Y$.
  Fix $r > 0$.
  Let $U = B_Z(x,r/2)^\circ$ in $Z$, so $U \neq \emptyset$ and we can fix some
  $z \in U$.
  Then there is an open set $V \subseteq X$ such that $U = Z\cap V$.
  We can find an open subset $V' \subseteq X$ such that
  $z \in V' \subseteq \bar V' \subseteq V$.
  Then $\bar V' = \bigcap_{s > 0} {\overline B}(\bar V',s)$, and as
  $\bar V'$ has closed metric
  neighbourhoods and $X$ is compact there is $s > 0$ such that
  ${\overline B}_X(\bar V',s) \subseteq V$.
  Decreasing $s$ further we may assume that $s \leq r/2$ and
  ${\overline B}_X(z,s) \subseteq V'$.

  Let $W = B_Y(y,\Delta(s))^\circ$, and $R = \pi^{-1}(W)\cap V'$, so $R$ is open.
  Then $W \neq \emptyset$, and given $y' \in W$ we know that
  $d_X(z,\pi^{-1}(y')) \leq s$.
  As $\pi^{-1}(y')$ is compact, we obtain by \fref{lem:DistFromCompact} that
  $\emptyset \neq \pi^{-1}(y')\cap {\overline B}_X(z,s) \subseteq R$.
  Finally we show that $R \subseteq B_X(x,r)$.
  Indeed, let $x' \in R$.
  Then $\pi(x') \in W$, so
  $d_Y(\pi(x'),y) < \Delta(s) \Longrightarrow d(x',Z) \leq s$, and by
  \fref{lem:DistFromCompact}
  there is $x'' \in Z$ such that $d_X(x',x'') \leq s$.
  Then $x' \in V' \Longrightarrow x'' \in {\overline B}(V',s) \subseteq V$, so
  $x'' \in Z\cap V = U \subseteq B_Z(x,r/2)$.
  Thus $x' \in B_X(x,r)$, as desired.
  We conclude that $B_X(x,r)^\circ \supseteq R \neq \emptyset$, so
  $x$ is weakly $d$-isolated.

  In case $x$ is $d$-isolated in $Z$ and $y$ in $Y$ then we may choose
  $z = x \in V'$ and we  know that $y \in W$ whereby $x \in R$, so the same
  argument shows that $x$ is $d$-isolated in $X$.
\end{proof}

\begin{cor}
  Let $\pi\colon X \to Y$ be an open epimorphism of locally
  compact topometric spaces.
  Let $x \in X$, $y = \pi(x) \in Y$, and $Z = \pi^{-1}(y)$ the fibre over $y$
  with the induced topometric structure.

  Then $x$ is $d$-isolated in $X$ if and only if $x$ is $d$-isolated
  in $Z$ and $y$ is $d$-isolated in $Y$.
\end{cor}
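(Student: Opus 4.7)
The plan is to observe that this corollary is essentially a packaging of the three clauses of \fref{thm:IsolTower} into a single biconditional in the special case where $\pi$ is both open and an epimorphism, and where the isolation notion used throughout is the strong ($d$-isolation) version rather than the weak one. So the proof would consist of invoking the relevant clauses in each direction without any additional work.

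For the forward direction, assume $x$ is $d$-isolated in $X$. Clause \fref{item:IsolTowerDownFibre} of \fref{thm:IsolTower} immediately gives that $x$ is $d$-isolated in the fibre $Z$; this in turn just uses \fref{lem:IsolSubSpace}, which applies because $Z$ carries the induced topometric structure. Since $\pi$ is open (hence \emph{a fortiori} weakly open), clause \fref{item:IsolTowerDownImage} applied in the $d$-isolated version yields that $y = \pi(x)$ is $d$-isolated in $Y$.

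For the converse, assume $x$ is $d$-isolated in $Z$ and $y$ is $d$-isolated in $Y$. Since $\pi$ is an epimorphism by hypothesis, clause \fref{item:IsolTowerUp} of \fref{thm:IsolTower} in its strong form gives at once that $x$ is $d$-isolated in $X$.

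There is no genuine obstacle here: all three clauses of \fref{thm:IsolTower} are already stated in both the weak and full $d$-isolation versions, and the hypotheses of the corollary (open epimorphism, local compactness, fibre with induced structure) are precisely those required by the theorem's three clauses when specialized to full $d$-isolation. The only thing worth pointing out in the write-up is that openness implies weak openness, so clause \fref{item:IsolTowerDownImage} applies with its full-isolation conclusion.
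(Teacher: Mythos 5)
Your proof is correct and is precisely the intended argument: the paper states this corollary without a separate proof, as an immediate assembly of the three clauses of \fref{thm:IsolTower} (clauses \fref{item:IsolTowerDownFibre} and \fref{item:IsolTowerDownImage} for the forward direction, clause \fref{item:IsolTowerUp} for the converse), exactly as you do. The only blemish is your closing remark that ``openness implies weak openness, so clause \fref{item:IsolTowerDownImage} applies with its full-isolation conclusion'' --- that inference is backwards, since weak openness only yields the weak-isolation conclusion; what you actually need, and have by hypothesis, is that $\pi$ is (fully) open, which is what the strong version of that clause requires.
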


\begin{cor}
  \label{cor:IsolTower}
  Let $\pi\colon X \to Y$ be an epimorphism of locally
  compact topometric spaces.
  Let $x \in X$, $y = \pi(x) \in Y$, and $Z = \pi^{-1}(y)$ the fibre over $y$
  with the induced topometric structure.
  Let $(Z,d_Z)$ be another topometric structure with the same
  underlying topological space $Z$, where $d_Z$ is finer than $d_X$, so
  $\id\colon (Z,d_Z) \to (Z,d_X)$ is a morphism.

  If $x$ is $d_Z$-isolated in $Z$ and $y$ is $d_Y$-isolated in $Y$
  then $x$ is $d_X$-isolated in $X$.
\end{cor}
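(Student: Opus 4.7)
The plan is to reduce this corollary directly to \fref{thm:IsolTower}\fref{item:IsolTowerUp}. That statement already provides $d_X$-isolation of $x$ in $X$ from $d_Y$-isolation of $y$ in $Y$ together with $d_X$-isolation of $x$ in the fibre $(Z,d_X{\restriction}_Z)$. So it suffices to show that under our assumptions, $x$ is in fact $d_X$-isolated in $(Z,d_X{\restriction}_Z)$; in other words, $d_Z$-isolation plus the morphism hypothesis on $\id\colon(Z,d_Z)\to(Z,d_X)$ implies $d_X$-isolation. Once this translation between the two metrics on $Z$ is in hand, the rest of the proof is simply an appeal to \fref{thm:IsolTower}.

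First I would fix $r>0$ and unpack the morphism condition on $\id\colon(Z,d_Z)\to(Z,d_X)$, which by \fref{dfn:Morphism} means weak local uniform continuity at $x$: there exist a topological neighbourhood $U$ of $x$ in $Z$ and $\delta>0$ such that whenever $z,z'\in U$ with $d_Z(z,z')<\delta$ one has $d_X(z,z')\leq r/2$. Since $x$ is $d_Z$-isolated in $(Z,d_Z)$, the set $B_{d_Z}(x,\delta)^\circ$ is a topological open neighbourhood of $x$ in $Z$; hence $V:=U\cap B_{d_Z}(x,\delta)^\circ$ is an open neighbourhood of $x$ as well. Any $z\in V$ satisfies both $z\in U$ and $d_Z(x,z)<\delta$, and therefore $d_X(x,z)\leq r/2<r$, so $V\subseteq B_{d_X}(x,r)$. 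This shows $x\in B_{d_X}(x,r)^\circ$ in $Z$, and since $r>0$ was arbitrary, $x$ is $d_X$-isolated in the fibre $(Z,d_X{\restriction}_Z)$.

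With this in place, we have that $x$ is $d_X$-isolated in $Z$ (equipped with the induced topometric structure from $X$) and by hypothesis $y$ is $d_Y$-isolated in $Y$. Applying \fref{thm:IsolTower}\fref{item:IsolTowerUp} to the epimorphism $\pi\colon X\to Y$ directly yields that $x$ is $d_X$-isolated in $X$, as required.

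The only real content lies in the translation step from $d_Z$-isolation to $d_X$-isolation inside $Z$; everything else is invoking the previously-proved theorem. The one delicate point to get right is that ``morphism'' here means weak local uniform continuity rather than global uniform continuity, so we must keep the auxiliary neighbourhood $U$ in the argument and intersect with it when forming the witness $V$. This is where I would need to be careful, but it is entirely routine.
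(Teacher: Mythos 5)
Your proof is correct and follows essentially the same route as the paper: first transfer the isolation of $x$ from $(Z,d_Z)$ to $(Z,d_X)$, then invoke \fref{thm:IsolTower}\fref{item:IsolTowerUp}. The only difference is that the paper handles the first step by observing that $\id\colon (Z,d_Z)\to(Z,d_X)$ is open and citing \fref{thm:IsolTower}\fref{item:IsolTowerDownImage}, whereas you inline that argument directly from weak local uniform continuity; both are fine.
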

\begin{proof}
  Since $\id\colon (Z,d_Z) \to (Z,d_X)$ is open we have that
  $x$ is $d_X$-isolated in $Z$ by
  \fref{thm:IsolTower}\fref{item:IsolTowerDownImage}.
  Now apply \fref{thm:IsolTower}\fref{item:IsolTowerUp}.
\end{proof}

\section{Cantor-Bendixson ranks}
\label{sec:CB}

In classical topological spaces the Cantor-Bendixson derivative
consists of removing isolated points.
One crucial property of the derivative is that it is a closed
subspace.
In the topometric setting the situation is more complicated.
If we simply tried to take out the (weakly) $d$-isolated points the
derivative would no longer be closed, and the machinery would break
down.
We resolve this difficulty by viewing the classical Cantor-Bendixson
derivative as consisting of removing open sets which are
``small'' (singletons, or finite sets).
In a topometric space the metric gives rise to notions of smallness
which allow to recover much of the classical theory concerning
Cantor-Bendixson analysis.

Similar extensions of the classical Cantor-Bendixson analysis were
also defined and used by
Newelski \cite{Newelski:DiameterLascarStrongType}.

\subsection{General definitions}

Fix a topometric space $(X,\sT,d)$.
We consider several natural notions of smallness
which depend on a parameter $\varepsilon > 0$,
none of which \emph{a priori}
better than another:

\begin{dfn}
  Let $A \subseteq X$, $\varepsilon > 0$, $\alpha \leq \omega$.
  We say that $A$ is \emph{$\varepsilon$-$\alpha$-finite} if there
  is no subset $\{a_i\colon i \leq \alpha\} \subseteq A$ satisfying
  $d(a_i,a_j) > \varepsilon$ for all $i<j\leq\alpha$.
  \begin{enumerate}
  \item We observe
    that $A$ is $\varepsilon$-$1$-finite if and only if
    $\diam(A) \leq \varepsilon$.
  \item We say that $A$ is \emph{$\varepsilon$-finite} if
    it is $\varepsilon$-$n$-finite for some $n < \omega$.
  \item We say that $A$ is \emph{$\varepsilon$-bounded} if
    it is $\varepsilon$-$\omega$-finite.
  \end{enumerate}
\end{dfn}

These notions of smallness relate as follows:
\begin{lem}
  \label{lem:SmallOpenSets}
  Let $U \subseteq X$ be open, $\varepsilon > 0$.
  \begin{enumerate}
  \item If $\diam(U) \leq \varepsilon$ then $U$ is $\varepsilon$-finite.
  \item If $U$ is $\varepsilon$-finite then it is $\varepsilon$-bounded.
  \item If $U$ is $\varepsilon$-bounded then it is the union of
    open sets of diameter $\leq 2\varepsilon$.
  \end{enumerate}
\end{lem}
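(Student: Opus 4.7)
Parts (i) and (ii) are meant to be direct unwindings of the definitions. For (i), the hypothesis $\diam(U) \leq \varepsilon$ is literally the statement that $U$ is $\varepsilon$-$1$-finite, and a fortiori $\varepsilon$-$n$-finite for any $n \geq 1$, so $U$ is $\varepsilon$-finite. For (ii), I would argue by contradiction: if $U$ is $\varepsilon$-$n$-finite and we had a subset $\{a_i : i \leq \omega\} \subseteq U$ pairwise $>\!\varepsilon$-separated, then its initial segment $\{a_0,\dots,a_{n+1}\}$ would already contradict $\varepsilon$-$n$-finiteness. Neither argument uses that $U$ is open or that $X$ is a topometric (rather than just metric) space.

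The content is in (iii). I will argue by contradiction: suppose $U$ cannot be written as a union of open sets of diameter $\leq 2\varepsilon$. Equivalently, there exists a point $x_0 \in U$ such that every open $V$ with $x_0 \in V \subseteq U$ has $\diam(V) > 2\varepsilon$. Starting from such an $x_0$, the goal is to construct an infinite $\varepsilon$-separated sequence in $U$, which contradicts $\varepsilon$-boundedness (any such sequence can be relabelled as an $(\omega+1)$-indexed family of pairwise $>\!\varepsilon$-separated points, violating $\varepsilon$-$\omega$-finiteness).

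The construction is inductive. Given $x_0, x_1, \ldots, x_n \in U$ with $d(x_i, x_j) > \varepsilon$ for all $i < j \leq n$, consider
\begin{gather*}
  V_n = U \setminus \bigcup_{i=1}^{n} {\overline B}(x_i,\varepsilon).
\end{gather*}
Lower semi-continuity of $d$ (part of the topometric hypothesis) guarantees that each ${\overline B}(x_i,\varepsilon)$ is topologically closed, so $V_n$ is open; and because $d(x_0,x_i) > \varepsilon$ for every $i \geq 1$, the point $x_0$ belongs to $V_n$. Thus $V_n$ is an open neighbourhood of $x_0$ inside $U$, and by the defining property of $x_0$ we have $\diam(V_n) > 2\varepsilon$. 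Pick $y, z \in V_n$ with $d(y,z) > 2\varepsilon$; by the triangle inequality at least one of them, say $y$, satisfies $d(y,x_0) > \varepsilon$, while $y \in V_n$ automatically gives $d(y,x_i) > \varepsilon$ for $1 \leq i \leq n$. Setting $x_{n+1} = y$ completes the induction step.

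The only non-formal ingredient is the use of lower semi-continuity of $d$ to ensure that the metric complement $V_n$ is topologically open, so that the bad-point hypothesis on $x_0$ can be invoked; this is the main (and only) obstacle, and once it is noted the construction proceeds mechanically. Iterating produces an infinite $\varepsilon$-separated subset of $U$, contradicting $\varepsilon$-boundedness and completing the proof of (iii).
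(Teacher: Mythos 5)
Your proof is correct and uses essentially the same device as the paper: the open set obtained by deleting the closed balls ${\overline B}(x_i,\varepsilon)$ (open precisely because lower semi-continuity makes these balls closed) around an $\varepsilon$-separated family containing the base point. The paper runs this directly, taking a \emph{maximal} such family (finite by $\varepsilon$-boundedness) so that the resulting neighbourhood has diameter $\leq 2\varepsilon$, whereas you run the contrapositive, extending the family indefinitely to contradict $\varepsilon$-boundedness; the two arguments are interchangeable.
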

\begin{proof}
  All but the last property are obvious.
  So assume $U$ is $\varepsilon$-bounded.
  Let $a_0 \in U$ and construct by induction a maximal set
  $\{a_i\colon i <\alpha\} \subseteq U$ satisfying $d(a_i,a_j) > \varepsilon$
  for all $i < j < \alpha$.
  Since $U$ is $\varepsilon$-bounded $\alpha$ must by finite.
  Let $V = U \setminus \bigcup_{0<i<\alpha} {\overline B}(a_i,\varepsilon)$.
  Then $V$ is open, since each ${\overline B}(a_i,\varepsilon)$ is closed,
  $a_0 \in V \subseteq U$, and $\diam(V) \leq 2\varepsilon$ by the maximality of the set
  $\{a_i\colon i < \alpha\}$.
\end{proof}

To each notion of smallness we associate a Cantor-Bendixson
derivative and rank.
We can also define Cantor-Bendixson derivative based on
(weakly) $d$-isolation, but showing these have the desired properties
is trickier.
\begin{dfn}
  \label{dfn:CBrank}
  Let $\varepsilon > 0$ and $* \in \{d,f,b,i,wi\}$
  \begin{enumerate}
  \item We define the \emph{$(*,\varepsilon)$-Cantor-Bendixson derivative} of
    $X$ as:
    \begin{gather*}
      \begin{array}{ll}
        X'_{d,\varepsilon} & = X \setminus \bigcup\{U \subseteq X\colon U \text{ open, } \diam(U) \leq \varepsilon\} \\
        X'_{f,\varepsilon} & = X \setminus \bigcup\{U \subseteq X\colon U \text{ open and $\varepsilon$-finite}\} \\
        X'_{b,\varepsilon} & = X \setminus \bigcup\{U \subseteq X\colon U \text{ open and $\varepsilon$-bounded}\} \\
        X'_{i,\varepsilon} & = X \setminus \bigcup\{{\overline B}(a,\varepsilon)^\circ\colon a \in X \text{ $d$-isolated}\} \\
        X'_{wi,\varepsilon} & = X \setminus \bigcup\{{\overline B}(a,\varepsilon)^\circ\colon a \in X \text{ weakly $d$-isolated}\}.
      \end{array}
    \end{gather*}
  \item We define the \emph{$(*,\varepsilon)$-Cantor-Bendixson derivative
      sequence}:
    \begin{gather*}
      \begin{array}{llr}
        X^{(0)}_{*,\varepsilon} & = X, \\
        X^{(\alpha+1)}_{*,\varepsilon} & = \bigl( X^{(\alpha)}_{*,\varepsilon} \bigr)'_{*,\varepsilon}, \\
        X^{(\alpha)}_{*,\varepsilon} & = \bigcap_{\beta<\alpha} X^{(\beta)}_{*,\varepsilon}, & (\alpha \text{ limit}) \\
        X^{(\infty)}_{*,\varepsilon} & = \bigcap_\alpha X^{(\alpha)}_{*,\varepsilon} = X^{|X|^+}_{*,\varepsilon}.
      \end{array}
    \end{gather*}
  \item 
    For $A \subseteq X$ we define its \emph{$(*,\varepsilon)$-Cantor-Bendixson rank}:
    \begin{gather*}
      \CB_{*,\varepsilon}(A) = \sup \{\alpha\colon X^{(\alpha)}_{*,\varepsilon} \cap A \neq \emptyset\}.
    \end{gather*}
    Note that if $A$ is compact then supremum is attained as a
    maximum.
  \item
    We say that $X$ is \emph{$(*,\varepsilon)$-CB-analysable} if
    $\CB_{*,\varepsilon}(X) < \infty$, i.e., if $X^{(\infty)}_{*,\varepsilon} = \emptyset$.
    It is \emph{$*$-CB-analysable} if it is $(*,\varepsilon)$-CB-analysable for
    all $\varepsilon > 0$.
  \end{enumerate}
\end{dfn}

\begin{dfn}
  In case $K \subseteq X$ is compact and has a Cantor-Bendixson rank we may
  define its \emph{Cantor-Bendixson degree}, in a manner depending on
  the kind of rank in question:
  \begin{enumerate}
  \item If $\CB^X_{d,\varepsilon}(K) = \alpha < \infty$ then
    $X^{(\alpha)}_{d,\varepsilon}\cap K \subseteq \bigcup_{i<n} U_i$ where each $U_i \subseteq X^{(\alpha)}_{d,\varepsilon}$
    is open of diameter $\leq \varepsilon$, and we define
    $\CBd^X_{d,\varepsilon}(K)$ to be the minimal such $n$.
  \item If $\CB^X_{f,\varepsilon}(K) = \alpha < \infty$ then
    $X^{(\alpha)}_{f,\varepsilon}\cap K \subseteq U$ where $U \subseteq X^{(\alpha)}_{f,\varepsilon}$
    is open and $\varepsilon$-$n$-finite for some $n$,
    and we define
    $\CBd^X_{f,\varepsilon}(K)$ to be the minimal such $n$.
  \item If $\CB^X_{b,\varepsilon}(K) = \alpha < \infty$ then
    $X^{(\alpha)}_{b,\varepsilon}\cap K \subseteq U$ where $U \subseteq X^{(\alpha)}_{b,\varepsilon}$
    is open and $\varepsilon$-bounded, and thus
    $2\varepsilon$-$n$-finite for some $n$,
    and we define
    $\CBd^X_{b,\varepsilon}(K)$ to be the minimal such $n$.
  \item The definition in the previous item is based on the one we
    have already given in \cite{BenYaacov:Morley} (see
    \fref{rmk:CBInLiterature} below).
    Alternatively, we observe that
    since $U$ is $\varepsilon$-bounded there exists a
    maximal finite subset
    $\{a_i\colon i<n\} \subseteq U$ verifying
    $d(a_i,a_j) > \varepsilon$ for all $i < j < n$
    and we may define $\CBd^X_{b',\varepsilon}(K)$ to be the smallest
    $n$ for which this is possible.
  \end{enumerate}
\end{dfn}

\begin{rmk}
  \label{rmk:CBInLiterature}
  \begin{enumerate}
  \item 
    In the case of a maximal topometric space, which is just
    the topometric representation of a classical topological space, these
    notions all coincide (for $\varepsilon$ small enough) with the classical
    Cantor-Bendixson ranks and derivatives.
  \item The Cantor-Bendixson rank which was defined in
    \cite{BenYaacov-Usvyatsov:CFO}
    is $\CB_{d,\varepsilon}$.
  \item Also, one can verify that the $\varepsilon$-Morley rank of a closed or
    open set $A$ as defined in \cite{BenYaacov:Morley} coincides with
    $\CB_{b,\varepsilon}(A)$ where we view $A$ as a subset of the space of types
    over a sufficiently saturated model, equipped with the standard
    metric.
    The $\varepsilon$-Morley degree defined there coincides with
    $\CBd_{b,\varepsilon}(A)$.
  \end{enumerate}
\end{rmk}

The three notions of Cantor-Bendixson rank based on notions of
smallness are tightly related, and in particular define a
unique notion of CB-analysability.
\begin{prp}
  \label{prp:CBdbf}
  For every topometric space $X$, ordinal $\alpha$ and $\varepsilon>0$:
  \begin{gather*}
    X^{(\alpha)}_{d,2\varepsilon}
    \subseteq X^{(\alpha)}_{b,\varepsilon}
    \subseteq X^{(\alpha)}_{f,\varepsilon}
    \subseteq X^{(\alpha)}_{d,\varepsilon}.
  \end{gather*}
  If follows for all $A \subseteq X$:
  \begin{gather*}
    \CB_{d,2\varepsilon}(A) \leq \CB_{b,\varepsilon}(A)
    \leq \CB_{f,\varepsilon}(A) \leq \CB_{d,\varepsilon}(A).
  \end{gather*}
  In particular, being $*$-CB-analysable for $* \in \{d,f,b\}$ are all
  equivalent properties, and from now on we shall refer to them as being
  \emph{CB-analysable}.
\end{prp}
\begin{proof}
  It follows from \fref{lem:SmallOpenSets} that
  if $X \subseteq Y \subseteq Z \subseteq W$ then
  $X'_{d,2\varepsilon} \subseteq Y'_{b,\varepsilon}
  \subseteq Z'_{f,\varepsilon} \subseteq W'_{d,\varepsilon}$, and from
  there proceed by induction.
\end{proof}

As we shall see below, the ranks $\CB_{f,\varepsilon}$ and
$\CB_{b,\varepsilon}$ are
somewhat easier to study than the $\CB_{d,\varepsilon}$.
At the same time, the degrees associated to $\CB_{d,\varepsilon}$ and
$\CB_{f,\varepsilon}$ are more elegant than those associated with
$\CB_{b,\varepsilon}$.
This ``comparative study'' suggests that among these three, the most
convenient rank to use is $\CB_{f,\varepsilon}$.

\begin{prp}
  \label{prp:DenseIsol}
  Assume that $X$ is locally compact and CB-analysable.
  Then the $d$-isolated points are dense.
\end{prp}
\begin{proof}
  Let $U \subseteq X$ be open, non-empty.
  As $X$ is locally compact, we may replace $U$ with a non-empty open
  subset such that $\bar U$ is compact.
  We construct a decreasing sequence of non-empty open sets
  $(U_i\colon i < \omega)$ and numbers
  $(\varepsilon_i >0\colon i < \omega)$ such that $U_0 = U$,
  $\diam(U_{i+1}) \leq 2^{-i}$ and $\bar U_{i+1} \subseteq U_i$.

  Start with $U_0 = U$.
  Given $U_i$ open, non-empty, let $V_i$ be open and non-empty such
  that $\bar V_i \subseteq U_i$.

  Let $x \in V_i$ be such that $\CB_{d,2^{-i}}(x) = \alpha$
  is minimal.
  This means that $V_i \subseteq X^{(\alpha)}_{d,2^{-i}}$ and that there is an
  open subset $W_{i+1}$ of $X^{(\alpha)}_{d,2^{-i}}$ such that $x \in W_{i+1}$ and
  $\diam(W_{i+1}) \leq 2^{-i}$.
  Let $U_{i+1} = V_i \cap W_{i+1}$.
  Since $V_i \subseteq X^{(\alpha)}_{d,2^{-i}}$, $U_{i+1}$ is open in $X$,
  $\diam(U_{i+1}) \leq 2^{-i}$ and $\bar U_{i+1} \subseteq \bar V_i \subseteq U_i$.

  In the end $\bigcap U_i = \bigcap \bar U_i$ is non-empty as a decreasing
  intersection on non-empty compact sets, and in fact consists
  of a single point $\{a\}$.
  It follows from the construction that
  for all $\varepsilon > 0$ the set $B(a,\varepsilon)$
  contains $U_i$ for some $i$, so $a \in U$ is $d$-isolated.
\end{proof}

\begin{cor}
  Let $X$ be a locally compact topometric space.
  Then the following are equivalent:
  \begin{enumerate}
  \item $X$ is CB-analysable.
  \item For all locally compact $\emptyset \neq Y \subseteq X$ the $d$-isolated points
    of the topometric space $Y$ (with the induced structure) are dense.
  \item For all closed $\emptyset \neq Y \subseteq X$ the topometric space $Y$ (with the
    induced structure) contains a weakly $d$-isolated point.
  \end{enumerate}
\end{cor}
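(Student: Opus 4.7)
My plan is a cyclic proof (i) $\Rightarrow$ (ii) $\Rightarrow$ (iii) $\Rightarrow$ (i). For (i) $\Rightarrow$ (ii), I would first observe that CB-analysability passes to arbitrary topometric subspaces: if $Y \subseteq X$ carries the induced structure and $V$ is an open subset of an ambient derivative $X^{(\alpha)}_{d,\varepsilon}$ containing $y \in Y^{(\alpha)}_{d,\varepsilon}$ with $\diam(V) \leq \varepsilon$, then $V \cap Y^{(\alpha)}_{d,\varepsilon}$ is an open neighbourhood of $y$ in $Y^{(\alpha)}_{d,\varepsilon}$ of diameter at most $\varepsilon$. A transfinite induction then gives $Y^{(\alpha)}_{d,\varepsilon} \subseteq X^{(\alpha)}_{d,\varepsilon} \cap Y$ for all $\alpha$, so if $X$ is CB-analysable then so is $Y$. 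When $Y$ is locally compact, \fref{prp:DenseIsol} concludes. For (ii) $\Rightarrow$ (iii), I would note that a closed subspace of a locally compact Hausdorff space is locally compact, so a nonempty closed $Y \subseteq X$ is locally compact; by (ii) it contains a dense, hence nonempty, set of $d$-isolated points, and any such point is in particular weakly $d$-isolated.

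The interesting direction is (iii) $\Rightarrow$ (i), which I would prove by contrapositive. If $X$ is not CB-analysable, then by \fref{prp:CBdbf} there exists $\varepsilon > 0$ with $Y := X^{(\infty)}_{d,\varepsilon} \neq \emptyset$. A routine transfinite induction shows each $X^{(\alpha)}_{d,\varepsilon}$ is closed in $X$ (successor stages remove unions of open sets, limit stages are intersections of closed sets), so $Y$ is closed. Stabilisation of the derivative sequence gives $Y = Y'_{d,\varepsilon}$, meaning $Y$ admits no nonempty open subset of diameter at most $\varepsilon$. But a weakly $d$-isolated $y \in Y$ would produce precisely one such set, namely the interior in $Y$ of $B_Y(y,\varepsilon/2)$, whose diameter is at most $\varepsilon$ by the triangle inequality. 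Hence $Y$ is a closed nonempty subspace of $X$ with no weakly $d$-isolated point, violating (iii).

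The only subtlety is choosing which flavour of Cantor-Bendixson derivative to use in the reverse direction: working with $\CB_{d,\varepsilon}$ rather than $\CB_{f,\varepsilon}$ or $\CB_{b,\varepsilon}$ makes the clash with the weakly $d$-isolated hypothesis immediate via the triangle inequality, and \fref{prp:CBdbf} guarantees that restricting to this flavour loses no generality when testing CB-analysability.
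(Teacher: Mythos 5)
Your proof is correct and follows essentially the same route as the paper: (i)$\Rightarrow$(ii) via \fref{prp:DenseIsol} together with the (here explicitly verified) fact that CB-analysability passes to subspaces, (ii)$\Rightarrow$(iii) immediately, and for the key direction the same observation that a weakly $d$-isolated point $y$ of a closed derivative yields a non-empty relatively open set $B(y,\varepsilon/2)^\circ$ of diameter $\leq\varepsilon$. The paper runs this last step directly (each non-empty $X^{(\alpha)}_{d,\varepsilon}$ strictly shrinks) while you phrase it contrapositively via the stabilised kernel $X^{(\infty)}_{d,\varepsilon}$; the two formulations are interchangeable.
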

\begin{proof}
  \begin{cycprf}
  \item[\impnext] By \fref{prp:DenseIsol}.
  \item[\impnext] Immediate.
  \item[\impfirst] Let $\varepsilon > 0$, and assume that $\alpha$ is such that
    $X^{(\alpha)}_{d,\varepsilon} \neq \emptyset$.
    Then by assumption there is a point $a \in X^{(\alpha)}_{d,\varepsilon}$ which is
    weakly $d$-isolated there.
    Then the set $B(a,\varepsilon/2)^\circ$ (as calculated inside $X^{(\alpha)}_{d,\varepsilon}$)
    is non-empty and of diameter $\leq \varepsilon$, so
    $X^{(\alpha+1)}_{d,\varepsilon} \subsetneq X^{(\alpha)}_{d,\varepsilon}$.
    Therefore $X^{(\infty)}_{d,\varepsilon} = \emptyset$.
  \end{cycprf}
\end{proof}

We also obtain a converse for \fref{lem:WIsolClsd}:
\begin{cor}
  \label{cor:IsolDense}
  Let $X$ be a locally compact CB-analysable topometric space.
  Then the set of weakly $d$-isolated points is the metric closure of
  the set of $d$-isolated points.
\end{cor}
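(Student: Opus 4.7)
The plan is to establish the two inclusions separately. By \fref{lem:WIsolClsd}, the set of weakly $d$-isolated points is metrically closed and clearly contains the set of $d$-isolated points, so it contains the metric closure of the latter. This is the easy direction.

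For the reverse inclusion, I would fix a weakly $d$-isolated point $x \in X$ and an arbitrary $\varepsilon > 0$, and aim to produce a $d$-isolated point within metric distance $\varepsilon$ of $x$. By weak $d$-isolation, $U := B(x,\varepsilon)^{\circ}$ is a non-empty open subset of $X$. Since $X$ is locally compact and CB-analysable, \fref{prp:DenseIsol} tells us exactly that the $d$-isolated points of $X$ are dense. Applied to the non-empty open set $U$, this yields a $d$-isolated point $a \in U$, which automatically satisfies $d(x,a) < \varepsilon$. Letting $\varepsilon \to 0$ shows that $x$ lies in the metric closure of the set of $d$-isolated points.

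There is essentially no obstacle here; the work has already been done in \fref{prp:DenseIsol}, and the role of this corollary is just to package the consequence for weakly $d$-isolated points. If one wanted a version not relying on density in all of $X$ but only locally near $x$, one could alternatively shrink $U$ to an open $V$ with compact closure $\bar V \subseteq U$ (using local compactness), observe that CB-analysability passes to subspaces via the inclusion $Y'_{d,\varepsilon} \subseteq X'_{d,\varepsilon} \cap Y$ iterated transfinitely, and apply \fref{prp:DenseIsol} inside $\bar V$; one would then note that a point $a \in V$ which is $d$-isolated in $\bar V$ is automatically $d$-isolated in $X$, since $V$ is open in $X$ and any $\bar V$-open neighbourhood of $a$ intersected with $V$ is an $X$-open neighbourhood of $a$ of the same diameter. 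But the direct appeal to \fref{prp:DenseIsol} is cleaner.
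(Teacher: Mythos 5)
Your proof is correct and follows exactly the paper's argument: the forward inclusion via \fref{lem:WIsolClsd}, and the reverse by applying \fref{prp:DenseIsol} to the non-empty open set $B(x,\varepsilon)^\circ$ furnished by weak $d$-isolation. The alternative localised variant you sketch is unnecessary but does no harm.
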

\begin{proof}
  One inclusion follows from \fref{lem:WIsolClsd}.
  For the other, let $x$ be a weakly $d$-isolated point.
  Then for each $r > 0$ the set $B(x,r)^\circ$ is non-empty and by
  \fref{prp:DenseIsol} contains a $d$-isolated point.
\end{proof}

We can now show the relation with the Cantor-Bendixson ranks based on
(weakly) $d$-isolated points:
\begin{thm}
  Let $X$ be a locally compact topometric space.
  Then all notions of CB-analysability defined so far are equivalent.
  Moreover, if $X$ is CB-analysable then for every ordinal $\alpha$ and
  $\varepsilon>0$:
  \begin{gather*}
    X^{(\alpha)}_{d,2\varepsilon} \subseteq X^{(\alpha)}_{wi,\varepsilon} \subseteq X^{(\alpha)}_{i,\varepsilon} \subseteq X^{(\alpha)}_{d,\varepsilon} \\
    \intertext{Whereby for all $A \subseteq X$:}
    \CB_{d,2\varepsilon}(A) \leq \CB_{wi,\varepsilon}(A) \leq \CB_{i,\varepsilon}(A) \leq \CB_{d,\varepsilon}(A).
  \end{gather*}
\end{thm}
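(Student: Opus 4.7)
The plan is to establish the three displayed inclusions by transfinite induction on $\alpha$; the inequalities on $\CB$-ranks then follow immediately from the inclusions, and the equivalence of the five notions of CB-analysability is a routine corollary. Two preliminaries are used throughout. First, each iterate $X^{(\alpha)}_{\ast,\varepsilon}$ is closed in $X$, since at each successor stage the complement is a union of open sets in the (closed) preceding iterate, and limit stages are intersections; each iterate is therefore locally compact. Second, the CB-analysability hypothesis on $X$ descends to every closed subspace (by the preceding characterisation in terms of the existence of weakly $d$-isolated points in non-empty closed subsets), so each iterate is itself CB-analysable.

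The leftmost inclusion $X^{(\alpha)}_{d,2\varepsilon} \subseteq X^{(\alpha)}_{wi,\varepsilon}$ needs no CB-analysability hypothesis. At a successor stage, if $x \in X^{(\alpha+1)}_{d,2\varepsilon}$ were excluded from $X^{(\alpha+1)}_{wi,\varepsilon}$, then $x$ would lie in some open $W \cap X^{(\alpha)}_{wi,\varepsilon} \subseteq {\overline B}(b,\varepsilon)$ with $W$ open in $X$ and $b$ weakly $d$-isolated in $X^{(\alpha)}_{wi,\varepsilon}$; restricting $W$ to $X^{(\alpha)}_{d,2\varepsilon} \subseteq X^{(\alpha)}_{wi,\varepsilon}$ (by the inductive hypothesis) yields an open neighbourhood of $x$ in $X^{(\alpha)}_{d,2\varepsilon}$ of diameter at most $2\varepsilon$, contradicting $x \in X^{(\alpha+1)}_{d,2\varepsilon}$. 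The rightmost inclusion $X^{(\alpha)}_{i,\varepsilon} \subseteq X^{(\alpha)}_{d,\varepsilon}$ is dual, now using CB-analysability: an $x \in X^{(\alpha+1)}_{i,\varepsilon}$ excluded from $X^{(\alpha+1)}_{d,\varepsilon}$ would lie in an open $V \subseteq X^{(\alpha)}_{d,\varepsilon}$ of diameter $\leq \varepsilon$, so $V \cap X^{(\alpha)}_{i,\varepsilon}$ is a non-empty open subset of the CB-analysable $X^{(\alpha)}_{i,\varepsilon}$; by \fref{prp:DenseIsol} it contains a $d$-isolated $a$, and the diameter bound forces $V \cap X^{(\alpha)}_{i,\varepsilon} \subseteq {\overline B}(a,\varepsilon)^\circ$ in $X^{(\alpha)}_{i,\varepsilon}$, contradicting $x \in X^{(\alpha+1)}_{i,\varepsilon}$.

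The middle inclusion $X^{(\alpha)}_{wi,\varepsilon} \subseteq X^{(\alpha)}_{i,\varepsilon}$ is the technical heart of the proof. At level $0$ it is immediate since every $d$-isolated point is weakly $d$-isolated. For the successor step, writing $Y_{wi} = X^{(\alpha)}_{wi,\varepsilon} \subseteq Y_i = X^{(\alpha)}_{i,\varepsilon}$ (the inductive hypothesis), one must verify that any $x \in Y_{wi}$ witnessed as removed by the $i$-derivation of $Y_i$ via a $d$-isolated $a \in Y_i$ with $x \in {\overline B}(a,\varepsilon)^\circ$ in $Y_i$ is also removed by the $wi$-derivation of $Y_{wi}$. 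When $a \in Y_{wi}$ the subspace topology makes $a$ automatically $d$-isolated in $Y_{wi}$ and the same open witness pulls back; the main obstacle is the case $a \in Y_i \setminus Y_{wi}$, in which one invokes density of $d$-isolated points in $Y_{wi}$ (afforded by CB-analysability) to produce a substitute weakly $d$-isolated $b \in Y_{wi}$ whose $\varepsilon$-ball in $Y_{wi}$ still captures $x$. Once all three inclusions are in hand, the $\CB$-rank inequalities are immediate from the definitions, and the equivalence of the five analysability notions follows: the leftmost inclusion (unconditional) gives $wi$-analysability $\Longrightarrow$ $d$-analysability, while $d$-analysability (= CB-analysability by \fref{prp:CBdbf}) yields both $i$- and $wi$-analysability via the rightmost and middle inclusions.
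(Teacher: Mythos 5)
Your decomposition into three inclusions, the unconditional argument for $X^{(\alpha)}_{d,2\varepsilon} \subseteq X^{(\alpha)}_{wi,\varepsilon}$, and the \fref{prp:DenseIsol}-based argument for $X^{(\alpha)}_{i,\varepsilon} \subseteq X^{(\alpha)}_{d,\varepsilon}$ all agree with the paper's proof and are correct. The gap is in the middle inclusion, precisely at the spot you yourself flag as ``the main obstacle''. When $a \in Y_i \setminus Y_{wi}$, the set $V = {\overline B}_{Y_i}(a,\varepsilon)^\circ \cap Y_{wi}$ is a non-empty open subset of $Y_{wi}$ of diameter at most $2\varepsilon$, and density of the $d$-isolated points of $Y_{wi}$ (a purely topological density, from \fref{prp:DenseIsol}) only produces some $d$-isolated $b \in V$; all you can then say is $d(b,x) \leq \diam(V) \leq 2\varepsilon$, and there is no reason for $x$ to lie in ${\overline B}(b,\varepsilon)$, let alone in its interior relative to $Y_{wi}$. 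Nothing in the density statement lets you choose $b$ metrically close to $x$ (or to $a$), so the ``substitute point'' step does not go through with the same $\varepsilon$; at best it would yield a statement with a loss in the radius. (For what it is worth, the paper dispatches this inclusion with a bare ``clearly'' attached to a general monotonicity claim $Y'_{wi,\varepsilon} \subseteq Z'_{i,\varepsilon}$ for $Y \subseteq Z$, which faces exactly the same difficulty when the witnessing $d$-isolated point of $Z$ is not in $Y$; you have correctly located where the work lies, but you have not done it.)

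There is a second, consequential gap: because your middle inclusion is conditional on CB-analysability, your closing paragraph only establishes $wi \Rightarrow d$, $d \Rightarrow i$ and $d \Rightarrow wi$; you never show that $i$-CB-analysability implies anything, so the cycle is not closed and the notions are not all shown equivalent. This one is easy to repair: the inclusion $X^{(\alpha)}_{d,2\varepsilon} \subseteq X^{(\alpha)}_{i,\varepsilon}$ holds unconditionally by exactly your leftmost argument (a point expelled from $X^{(\alpha)}_{i,\varepsilon}$ lies in some ${\overline B}(a,\varepsilon)^\circ$ with $a$ $d$-isolated there, and intersecting with $X^{(\alpha)}_{d,2\varepsilon}$ gives an open neighbourhood of that point of diameter at most $2\varepsilon$), and this yields $i \Rightarrow d$, closing the equivalence without any appeal to the problematic middle inclusion. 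I would restructure along these lines and treat $X^{(\alpha)}_{wi,\varepsilon} \subseteq X^{(\alpha)}_{i,\varepsilon}$ as a separate claim requiring a genuine proof.
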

\begin{proof}
  Clearly if $X \subseteq Y \subseteq Z$ then
  $X'_{d,2\varepsilon} \subseteq Y'_{wi,\varepsilon} \subseteq Z'_{i,\varepsilon}$,
  whereby $X^{(\alpha)}_{d,2\varepsilon} \subseteq X^{(\alpha)}_{wi,\varepsilon} \subseteq X^{(\alpha)}_{i,\varepsilon}$ for all $\varepsilon$.
  Thus $i$-CB-analysable implies $wi$-CB-analysable implies
  CB-analysable.
  To close the circle assume that $X$ is CB-analysable.
  By \fref{prp:DenseIsol}, if $X^{(\alpha)}_{i,\varepsilon}$ is non-empty then it
  contains a $d$-isolated point, so
  $X^{(\alpha+1)}_{i,\varepsilon} \subsetneq X^{(\alpha)}_{i,\varepsilon}$.
  Thus $X^{(\infty)}_{i,\varepsilon} = \emptyset$.

  For the moreover part assume $X$ is CB-analysable.
  Assume $U \subseteq X$ is open and $\diam(U) \leq \varepsilon$.
  By \fref{prp:DenseIsol} $U$ contains a $d$-isolated point $a$, and
  clearly $U \subseteq {\overline B}(a,\varepsilon)^\circ$.
  It follows that
  $X'_{i,\varepsilon} \subseteq X'_{d,\varepsilon}$, and the rest follows.
\end{proof}

\begin{rmk}
  Clearly, if $\varepsilon > \delta$ then
  $X'_{*,\varepsilon} \subseteq X'_{*,\delta}$.
  One may therefore define
  \hbox{$X'_{*,\varepsilon^-}
    = \bigcap_{\delta < \varepsilon} X'_{*,\delta}$}
  and then proceed to define
  $X^{(\alpha)}_{*,\varepsilon^-}$ and
  $\CB^X_{*,\varepsilon^-}$ accordingly.
  We observe that for all $\varepsilon > \varepsilon'$:
  \begin{gather*}
    X^{(\alpha)}_{*,\varepsilon}
    \subseteq X^{(\alpha)}_{*,\varepsilon^-}
    \subseteq X^{(\alpha)}_{*,\varepsilon'}, \qquad
    \CB^X_{*,\varepsilon}(A)
    \leq \CB^X_{*,\varepsilon^-}(A)
    \leq \CB^X_{*,\varepsilon'}(A).
  \end{gather*}
  In particular, no new notion of CB-analysability arises with these
  ranks.
  For some specific model-theoretic considerations (e.g., local Shelah
  stability ranks) the $\CB_{*,\varepsilon^-}$ are more convenient to
  use than the $\CB_{*,\varepsilon}$ ranks we defined earlier.
  In the present paper we shall restrict our attention to
  ranks of the form $\CB_{*,\varepsilon}$.
\end{rmk}

\subsection{Cantor-Bendixson analysis of subspaces and quotients}

\begin{lem}
  \label{lem:CBSubspace}
  Let $X \subseteq Y$ be topometric spaces.
  Then for every $* \in \{d,b,f,i\}$, $x \in X$, $\varepsilon > 0$:
  $\CB_{*,\varepsilon}^X(x) \leq \CB_{*,\varepsilon}^Y(x)$.
  In particular if $Y$ is CB-analysable then so is $X$.
\end{lem}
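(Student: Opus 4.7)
The plan is to argue by transfinite induction on $\alpha$ that $X^{(\alpha)}_{*,\varepsilon}\subseteq Y^{(\alpha)}_{*,\varepsilon}$. Both conclusions of the lemma follow at once: the rank inequality is read off from the definition of $\CB$, and the hereditary analysability follows because $X^{(\infty)}_{*,\varepsilon}\subseteq Y^{(\infty)}_{*,\varepsilon}=\emptyset$. The base case $\alpha=0$ is the hypothesis $X\subseteq Y$, and the limit step is automatic because transfinite intersections preserve inclusions, so all of the content is concentrated in the successor step.

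At a successor stage, write $Z=X^{(\alpha)}_{*,\varepsilon}$ and $W=Y^{(\alpha)}_{*,\varepsilon}$, so that $Z\subseteq W$ by the inductive hypothesis, and $Z$ carries the topometric subspace structure inherited from either $X$ or $W$ (these coincide, since subspace constructions compose). The successor step reduces to the single-derivative inclusion $Z'_{*,\varepsilon}\subseteq W'_{*,\varepsilon}$.

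For $*\in\{d,f,b\}$ this is immediate from the definitions: each of the three smallness notions---diameter $\leq\varepsilon$, $\varepsilon$-finite, and $\varepsilon$-bounded---depends only on the restricted metric on the set in question, so it is preserved by passing to subsets. Hence, for any open $U\subseteq W$ that is small in the relevant sense, $U\cap Z$ is open in $Z$ and still small, so
\begin{gather*}
\bigcup\{U\cap Z : U\subseteq W\text{ open, small}\}\subseteq\bigcup\{V\subseteq Z : V\text{ open, small}\},
\end{gather*}
and complementing in $Z$ yields $Z'_{*,\varepsilon}\subseteq W'_{*,\varepsilon}\cap Z\subseteq W'_{*,\varepsilon}$.

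For $*=i$ the key ingredient is \fref{lem:IsolSubSpace}: any $a\in Z$ which is $d$-isolated in $W$ is also $d$-isolated in $Z$, and an open $V\subseteq W$ with $V\subseteq\overline B(a,\varepsilon)$ restricts to an open $V\cap Z$ in $Z$ still contained in $\overline B(a,\varepsilon)$. This underwrites the same complementation argument for witnesses $a\in Z$. I expect the main obstacle here to be controlling $d$-isolated witnesses of $W$ lying outside $Z$: one must verify that such points cannot cause the derivative in $W$ to remove points of $Z$ beyond what the $Z$-derivative already removes, which is the one place where the $i$-rank behaves differently from the purely smallness-based ranks $d,f,b$.
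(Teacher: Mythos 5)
Your induction scheme, and your treatment of the cases $*\in\{d,f,b\}$, are correct and coincide with the paper's argument: the paper's entire proof is the one-line claim that $X\subseteq Y$ implies $X'_{*,\varepsilon}\subseteq Y'_{*,\varepsilon}$, which is exactly your successor step, and for these three cases your justification (hereditariness of the smallness notions, plus complementation) is the intended one.

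The case $*=i$, however, is a genuine gap: you name the obstacle (a $d$-isolated witness $a\in W\setminus Z$ whose ball meets $Z$) but do not close it, and in fact it cannot be closed in the stated generality. Consider $Y=\{a\}\sqcup 2^\omega$ with $\{a\}$ clopen, $2^\omega$ carrying the Cantor topology, and the metric $d(a,x)=1$ for $x\in 2^\omega$, $d(x,y)=2$ for distinct $x,y\in 2^\omega$; one checks directly that this is a compact topometric space. Take $\varepsilon=1$. The point $a$ is topologically isolated, hence $d$-isolated, and ${\overline B}_Y(a,1)=Y$, so $Y'_{i,1}=\emptyset$ and $\CB^Y_{i,1}$ is identically $0$. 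But the subspace $X=2^\omega$ has no $d$-isolated points at all (for $r<2$ the ball $B_X(x,r)=\{x\}$ has empty interior), so $X'_{i,1}=X$ and $\CB^X_{i,1}(x)=\infty$. Thus the single-step inclusion, and with it the rank inequality for $*=i$, fails. So your suspicion is well founded: \fref{lem:IsolSubSpace} only controls $d$-isolated witnesses that lie in the subspace, and the paper's assertion that the inclusion is ``straightforward'' for $*=i$ glosses over precisely the configuration you flagged. Your proof is complete and correct for $*\in\{d,f,b\}$ (which is all that the CB-analysability conclusion and the rest of the paper require, by \fref{prp:CBdbf}); the $*=i$ clause needs either to be dropped or to be supplemented with an additional hypothesis forcing the relevant witnesses into $X$.
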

\begin{proof}
  It is straightforward to verify that $X \subseteq Y$ implies
  $X'_{*,\varepsilon} \subseteq Y'_{*,\varepsilon}$
  for $* \in \{d,b,f,i\}$ (though not for $* = wi$,
  since a weakly $d$-isolated point is not necessarily so in a
  subspace).
  The statement follows.
\end{proof}

\begin{lem}
  \label{lem:CBImage}
  Let $\pi\colon X \to Y$ be a surjective morphism of compact topometric
  spaces, and let $\Delta$ be
  such that
  $d(x,y) \leq \Delta(\varepsilon)
  \Longrightarrow d(\pi(x),\pi(y)) \leq \varepsilon$.
  Then for all $b \in Y$ and $\varepsilon > 0$:
  $\CB^Y_{f,\varepsilon}(y) \leq \CB^X_{f,\Delta(\varepsilon)}(\pi^{-1}(y))$,
  i.e.,
  $Y^{(\alpha)}_{f,\varepsilon}
  \subseteq \pi\left(X^{(\alpha)}_{f,\Delta(\varepsilon)}\right)$
  for all $\alpha$.
  In particular, if $X$ is CB-analysable so is $Y$.
\end{lem}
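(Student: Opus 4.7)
The plan is to prove the containment $Y^{(\alpha)}_{f,\varepsilon} \subseteq \pi(X^{(\alpha)}_{f,\Delta(\varepsilon)})$ by transfinite induction on $\alpha$, with the rank inequality as an immediate corollary. The base case is surjectivity of $\pi$, so all the action is in the successor and limit steps. Two facts will be used throughout: first, since $\pi$ is a continuous map from a compact space to a Hausdorff space, it sends closed sets to closed sets; second, $\pi$ sends $\Delta(\varepsilon)$-finite subsets of $X$ to $\varepsilon$-finite subsets of $Y$ (indeed, if $\{y_0,\ldots,y_n\}\subseteq\pi(A)$ were pairwise at distance $>\varepsilon$, then any lifts $x_i\in A$ would be pairwise at distance $>\Delta(\varepsilon)$ by contrapositive of the uniform continuity modulus, contradicting $\Delta(\varepsilon)$-$n$-finiteness of $A$).

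For the limit step, suppose $y \in Y^{(\alpha)}_{f,\varepsilon} = \bigcap_{\beta<\alpha} Y^{(\beta)}_{f,\varepsilon}$. By the inductive hypothesis, each set $\pi^{-1}(y)\cap X^{(\beta)}_{f,\Delta(\varepsilon)}$ is non-empty. These are closed subsets of the compact fiber $\pi^{-1}(y)$ and decrease with $\beta$, so their intersection $\pi^{-1}(y)\cap X^{(\alpha)}_{f,\Delta(\varepsilon)}$ is non-empty by compactness, giving a preimage of $y$ in $X^{(\alpha)}_{f,\Delta(\varepsilon)}$.

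The successor step is the main obstacle. Given $y\in Y^{(\alpha+1)}_{f,\varepsilon}$, I would argue by contradiction, assuming $\pi^{-1}(y)\cap X^{(\alpha+1)}_{f,\Delta(\varepsilon)}=\emptyset$. By the inductive hypothesis the compact set $K=\pi^{-1}(y)\cap X^{(\alpha)}_{f,\Delta(\varepsilon)}$ is non-empty, and by assumption every point of $K$ has an open neighbourhood in $X^{(\alpha)}_{f,\Delta(\varepsilon)}$ which is $\Delta(\varepsilon)$-finite. By compactness $K$ is covered by finitely many such neighbourhoods; their union $U$ is an open subset of $X^{(\alpha)}_{f,\Delta(\varepsilon)}$ which is still $\Delta(\varepsilon)$-finite (a $\Delta(\varepsilon)$-separated set in a finite union of $\Delta(\varepsilon)$-$n_i$-finite sets has size at most $\sum n_i$ by pigeonhole).

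To convert $U$ into a small open neighbourhood of $y$ in $Y^{(\alpha)}_{f,\varepsilon}$, I would set $F = X^{(\alpha)}_{f,\Delta(\varepsilon)}\setminus U$, which is closed in $X$, and take $V = Y\setminus\pi(F)$, which is open and contains $y$ since $K\subseteq U$. Then any point $y'\in V\cap Y^{(\alpha)}_{f,\varepsilon}$ has some preimage in $X^{(\alpha)}_{f,\Delta(\varepsilon)}$ by induction, and that preimage must lie in $U$ by the definition of $V$; hence $V\cap Y^{(\alpha)}_{f,\varepsilon}\subseteq\pi(U)$, which is $\varepsilon$-finite by the second fact above. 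This exhibits an open neighbourhood of $y$ in $Y^{(\alpha)}_{f,\varepsilon}$ which is $\varepsilon$-finite, contradicting $y\in Y^{(\alpha+1)}_{f,\varepsilon}$. The CB-analysability statement then follows since ordinals at which the derivative sequence of $Y$ strictly decreases are dominated by those for $X$.
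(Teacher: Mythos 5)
Your proof is correct, and at the crucial successor step it takes a genuinely different route from the paper. The paper also reduces to the statement that $V\cap X^{(\alpha)}_{f,\Delta(\varepsilon)}$ is $\Delta(\varepsilon)$-$n$-finite for a finite union $V$ of neighbourhoods covering the fibre, but it then derives the contradiction by a net argument: for every neighbourhood $U$ of $y$ it picks $n+1$ points of $U\cap Y^{(\alpha)}_{f,\varepsilon}$ that are $\varepsilon$-separated, lifts them, and extracts convergent subnets landing in the fibre, forcing all $n+1$ lifts into $V$ simultaneously. You instead construct the small neighbourhood of $y$ directly: $V = Y\setminus\pi\bigl(X^{(\alpha)}_{f,\Delta(\varepsilon)}\setminus U\bigr)$ is open because $\pi$ is a closed map, and the induction hypothesis forces every point of $V\cap Y^{(\alpha)}_{f,\varepsilon}$ to lift into $U$, so the image is $\varepsilon$-finite. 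This is essentially the ``$R^{\forall U}$'' pullback trick that the paper itself uses later in its more general relational version of this lemma (Lemma~\ref{lem:SmallRelPullback} and Theorem~\ref{thm:RelCBRank}, from which Lemma~\ref{lem:CBImage} is rederived as a corollary), so your argument anticipates the cleaner general mechanism and avoids nets entirely; the paper's net proof, by contrast, needs no closedness of $\pi$ beyond compactness of $X$. You also correctly supply the compactness argument at limit stages (decreasing nonempty closed subsets of the compact fibre), which the paper dismisses as clear. The only point worth making explicit is that a point of $\pi^{-1}(y)\setminus X^{(\alpha)}_{f,\Delta(\varepsilon)}$ also has a neighbourhood meeting $X^{(\alpha)}_{f,\Delta(\varepsilon)}$ in a ($\Delta(\varepsilon)$-finite, indeed empty) set, but since you intersect with $X^{(\alpha)}_{f,\Delta(\varepsilon)}$ from the start this causes no difficulty.
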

\begin{proof}
  We show that
  $Y^{(\alpha)}_{f,\varepsilon}
  \subseteq \pi\left(X^{(\alpha)}_{f,\Delta(\varepsilon)}\right)$
  by induction on $\alpha$.
  For $\alpha = 0$ and limit this is clear from the induction hypothesis.
  For $\alpha+1$, let us assume that
  $y \in
  Y^{(\alpha+1)}_{f,\varepsilon}
  \setminus \pi\left(X^{(\alpha+1)}_{f,\Delta(\varepsilon)}\right)$.
  Then each $x \in \pi^{-1}(y)$ has an open neighbourhood $V_x$ such that
  $V_x \cap X^{(\alpha)}_{f,\Delta(\varepsilon)}$ is $\Delta(\varepsilon)$-finite.
  Since $\pi^{-1}(y)$ is compact it can be covered by a finite sub-family:
  $\pi^{-1}(y) \subseteq V_{x_0} \cup \ldots \cup V_{x_{k-1}} = V$.
  Then $V \cap X^{(\alpha)}_{f,\Delta(\varepsilon)}$ is
  $\Delta(\varepsilon)$-$n$-finite for some $n < \omega$.

  Let $N(y)$ denote the set of all open neighbourhoods of $y$.
  Then for all $U \in N(y)$ there are points
  $y_{U,i} \in U \cap Y^{(\alpha)}_{f,\varepsilon}$ for $i \leq n$ such that
  $i < j \Longrightarrow d(y_{U,i},y_{U,j}) > \varepsilon$.
  By the induction hypothesis there are
  $x_{U,i} \in \pi^{-1}(y_{U,i}) \cap X^{(\alpha)}_{f,\Delta(\varepsilon)}$.
  By assumption on $\pi,\Delta$:
  $i < j \Longrightarrow d(x_{U,i},x_{U,j}) > \Delta(\varepsilon)$.

  For each $i \leq n$, the net $(y_{U,i}\colon U \in N(y))$ converges
  to $y$.
  As $X$ is compact we can find a directed partially ordered set
  $(S,\leq)$ and a decreasing function $\sigma\colon S \to N(y)$ sending
  $s \mapsto U_s$ such that for each $i \leq n$ the  sub-net
  $(x_{U_s,i}\colon s \in S)$ converges in $X$, say to $z_i$.
  Then necessarily $\pi(z_i) = y$, i.e., $z_i \in \pi^{-1}(y)$.
  Thus for some $s \in S$ and $U = U_s \in N(y)$ we have
  $x_{U,i} \in V$ for all  $i \leq n$, in contradiction with
  $\Delta(\varepsilon)$-$n$-finiteness of $V \cap X^{(\alpha)}_{f,\Delta(\varepsilon)}$.
  This contradiction concludes the proof.
\end{proof}

\begin{dfn}
  Let $X$ be a topometric space and $\varepsilon > 0$.
  An \emph{$\varepsilon$-perfect tree} in $X$ is a tree of compact non-empty
  sets $\{F_\sigma\colon \sigma \in 2^{<\omega}\}$ where:
  \begin{enumerate}
  \item If $\sigma,\tau \in 2^{<\omega}$ and $\sigma < \tau$ (i.e., $\tau$ extends $\sigma$) then
    $F_\tau \subseteq F_\sigma$.
  \item For all $\sigma \in 2^{<\omega}$: $d(F_{\sigma0},F_{\sigma1}) > \varepsilon$.
  \end{enumerate}
\end{dfn}

\begin{lem}
  \label{lem:BasicPerfectTree}
  Let $X$ be a locally compact topometric space, $\varepsilon > 0$,
  and assume an $\varepsilon$-perfect tree
  $\{F_\sigma\colon \sigma \in 2^{<\omega}\}$ exists in $X$.
  Let $\sB$ be a base of closed sets for the topology on $X$, closed
  under finite intersections, and such that for every compact set
  $F \subseteq X$ there is a compact $F' \in \sB$ such that
  $F \subseteq F'$.
  Then there exists in $X$ an $\varepsilon$-perfect tree
  $\{F'_\sigma\colon \sigma < 2^{<\omega}\} \subseteq \sB$,
  such that moreover $F'_\sigma \supseteq F_\sigma$ for all
  $\sigma \in 2^{<\omega}$.
\end{lem}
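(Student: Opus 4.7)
The plan is to construct the new tree $\{F'_\sigma\colon \sigma \in 2^{<\omega}\} \subseteq \sB$ by induction on the length of $\sigma$, maintaining at each stage that $F'_\sigma$ is compact, $F_\sigma \subseteq F'_\sigma$, and $F'_\sigma \subseteq F'_\tau$ for every prefix $\tau$ of $\sigma$. The key point is that the hypothesis on $\sB$ lets us remain inside a fixed compact member of $\sB$ throughout, so that the finite intersection property is available at every step.

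For the base case, apply the hypothesis to the compact set $F_\emptyset$ to obtain a compact $F'_\emptyset \in \sB$ with $F_\emptyset \subseteq F'_\emptyset$. For the inductive step, assume $F'_\sigma \in \sB$ has been constructed, is compact, and contains $F_\sigma$, hence both $F_{\sigma 0}$ and $F_{\sigma 1}$. Consider the families
\begin{gather*}
  \cG_i = \{F'_\sigma \cap B\colon B \in \sB,\ B \supseteq F_{\sigma i}\} \subseteq \sB, \qquad i \in \{0,1\},
\end{gather*}
which are closed under finite intersections (since $\sB$ is), consist of compact sets (as closed subsets of $F'_\sigma$), and satisfy $\bigcap \cG_i = F_{\sigma i}$ because $\sB$ is a base of closed sets.

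Now work inside the compact product $F'_\sigma \times F'_\sigma$. The set $d^{-1}([0,\varepsilon])$ is closed by lower semi-continuity of $d$, and the family
\begin{gather*}
  \bigl\{ (G_0 \times G_1) \cap d^{-1}([0,\varepsilon]) \colon G_0 \in \cG_0,\ G_1 \in \cG_1 \bigr\}
\end{gather*}
is a family of closed subsets of a compact space, directed under finite intersection. Its overall intersection is $(F_{\sigma 0} \times F_{\sigma 1}) \cap d^{-1}([0,\varepsilon])$, which is empty because $d(F_{\sigma 0}, F_{\sigma 1}) > \varepsilon$. Hence by the finite intersection property some single element of this family is already empty, yielding $G_0 \in \cG_0$ and $G_1 \in \cG_1$ with $d(G_0, G_1) > \varepsilon$. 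Set $F'_{\sigma 0} = G_0$ and $F'_{\sigma 1} = G_1$; these lie in $\sB$, are compact, contain $F_{\sigma 0}$ and $F_{\sigma 1}$ respectively, and are both contained in $F'_\sigma$, completing the induction.

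The principal obstacle is ensuring that the compactness needed to run the finite intersection argument is actually present; this is precisely what the two hypotheses on $\sB$ (closure under finite intersections, and the existence of a compact member above any compact set) are designed to supply, and is also why the assumption that $X$ itself be locally compact cannot be dropped.
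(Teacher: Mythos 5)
Your proof is correct and rests on the same ingredients as the paper's: compactness of the ambient $F'_\sigma$, the fact that $\sB$ is a base of closed sets closed under finite intersections, and lower semi-continuity of $d$ (which you invoke via closedness of $d^{-1}([0,\varepsilon])$, where the paper instead uses closedness of ${\overline B}(F_{\sigma 0},\varepsilon)$), together with the attainment of the distance between compact sets to pass from ``all pairwise distances $>\varepsilon$'' to $d(G_0,G_1)>\varepsilon$. The only difference is organisational: you shrink to $F'_{\sigma 0}$ and $F'_{\sigma 1}$ simultaneously by one finite-intersection argument in the product $F'_\sigma\times F'_\sigma$, whereas the paper performs two successive one-sided shrinkings; your symmetric packaging is a pleasant cosmetic variant, not a different method.
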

\begin{proof}
  We let $F_\emptyset'$ be any compact member of $\sB$ containing $F_\emptyset$.

  We now proceed by induction on $|\sigma|$.
  Assume $F_\sigma \subseteq F_\sigma' \in \sB$ has been chosen.
  By assumption
  ${\overline B}(F_{\sigma0},\varepsilon)
  \cap F'_\sigma \cap F_{\sigma1} = \emptyset$.
  Since $F_{\sigma0}$ is compact,
  ${\overline B}(F_{\sigma0},\varepsilon)$ is closed and thus
  ${\overline B}(F_{\sigma0},\varepsilon) \cap F'_\sigma$ is compact.
  Since $F_{\sigma1}$ is an intersection of members of $\sB$, there is a
  finite sub-intersection $F'_{\sigma1}$ satisfying
  ${\overline B}(F_{\sigma0},\varepsilon)
  \cap F'_\sigma \cap F'_{\sigma1}
  = \emptyset$.
  Since $F'_\sigma \in \sB$ we may assume that
  $F'_\sigma \supseteq F'_{\sigma1}$, so $F'_{\sigma1}$ is
  compact and
  ${\overline B}(F_{\sigma0},\varepsilon) \cap F'_{\sigma1}
  = \emptyset$.
  Since $F_{\sigma0}$ is compact as well we get
  $d(F_{\sigma0},F'_{\sigma1}) > \varepsilon$,
  and since $\sB$ is closed under finite intersections we
  have $F'_{\sigma1} \in \sB$.
  We can now do the same thing to find $F'_{\sigma0} \in \sB$ compact such
  that $F_{\sigma0} \subseteq F'_{\sigma0} \subseteq F'_\sigma$ and
  $d(F'_{\sigma0},F'_{\sigma1}) > \varepsilon$.
  This completes the induction step.
\end{proof}

\begin{rmk}
  Notice that the second requirement on $\sB$ is not superfluous.
  Indeed, the set of all complements of bounded open sets in $\bR$ is a
  base for the closed sets and is closed under finite intersections,
  but equipping $\bR$ with the maximal (i.e., discrete) metric we obtain
  a locally compact topometric space in which
  \fref{lem:BasicPerfectTree} fails.

  On the other hand, let $X$ be a locally compact space and let $\sB$
  be the family of all zero sets of functions in $C(X,[0,1])$.
  Then it is easy to verify that $\sB$ satisfies all the assumptions
  of \fref{lem:BasicPerfectTree}.
\end{rmk}

\begin{prp}
  \label{prp:PerfectTree}
  Let $X$ be locally compact.
  Then $X$ is CB-analysable if and only if
  for no $\varepsilon > 0$ is there an $\varepsilon$-perfect tree in $X$.
\end{prp}
\begin{proof}
  Assume $X$ is not CB-analysable, say it is not
  $(d,\varepsilon)$-CB-analysable.
  We may replace $X$ with $X^{(\infty)}_{d,\varepsilon}$,
  so every non-empty open set
  has diameter greater than $\varepsilon$.
  Given an open set $\emptyset \neq U \subseteq X$ there are
  $x,y \in F$ be such that $d(x,y) > \varepsilon$.
  Then we can find compact neighbourhoods $F_0$ and $F_1$ of $x$ and
  $y$, respectively, such that $d(F_0,F_1) > \varepsilon$.
  In particular $F_0^\circ$ and $F_1^\circ$ are non-empty open sets.
  We can thus proceed by induction to construct the tree.

  Conversely, assume an $\varepsilon$-perfect tree $(F_\sigma\colon \sigma \in 2^{<\omega})$ exists.
  Let $F_n = \bigcup_{\sigma \in 2^n} F_\sigma$ and $F = \bigcap_n F_n$.
  Then $F \subseteq X$ is compact, and there is a natural surjective
  mapping $\pi\colon F \to 2^\omega$ sending $F_\tau = \bigcap_n F_{\tau\rest n}$ to $\tau$ for all
  $\tau \in 2^\omega$.
  Viewing $2^\omega$ with the natural topology and the discrete metric, $\pi$
  is a surjective morphism.
  Since $2^\omega$ is not CB-analysable, $F$ is not CB-analysable by
  \fref{lem:CBImage}, so $X$ is not CB-analysable by
  \fref{lem:CBSubspace}.
\end{proof}

\begin{thm}
  \label{thm:CBQuot}
  Let $X$ be a compact topometric space with enough precise quotients,
  and let $\cQ$ be a sufficient family of precise quotients of $X$
  (see \fref{dfn:EnoghQuot}).
  Then the following are equivalent:
  \begin{enumerate}
  \item $X$ is CB-analysable.
  \item All homomorphic images of $X$ are CB-analysable.
  \item All precise quotients of $X$ are CB-analysable.
  \item All $Y \in \cQ$ admitting a countable base are
    CB-analysable.
  \end{enumerate}
\end{thm}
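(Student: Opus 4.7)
The plan is to prove a four-cycle of implications. The implications \textit{(i)} $\Rightarrow$ \textit{(ii)} $\Rightarrow$ \textit{(iii)} $\Rightarrow$ \textit{(iv)} are comparatively cheap: for the first, a homomorphic image $f(X) \subseteq Y$ is a subspace of $Y$ onto which $f$ restricts as a surjective morphism between compact spaces, so CB-analysability descends by \fref{lem:CBImage}; the second is trivial since a precise quotient is in particular a surjective morphism (hence a homomorphic image); the third is immediate from the fact that $\cQ$ is a family of precise quotients of $X$. The real content is \textit{(iv)} $\Rightarrow$ \textit{(i)}, and this is where I would concentrate the argument.

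For \textit{(iv)} $\Rightarrow$ \textit{(i)} I would argue contrapositively: assume $X$ is not CB-analysable and produce a countably-based $Y \in \cQ$ which is not CB-analysable either. By \fref{prp:PerfectTree} there is some $\varepsilon > 0$ and an $\varepsilon$-perfect tree $\{F_\sigma : \sigma \in 2^{<\omega}\}$ in $X$. Applying \fref{lem:BasicPerfectTree} together with the remark immediately following it (which identifies the family of zero sets of $[0,1]$-valued continuous functions as a legitimate base $\sB$ for compact $X$), I may assume each $F_\sigma$ is the zero set of some function $f_\sigma \in C(X,[0,1])$. The collection $\fB = \{f_\sigma : \sigma \in 2^{<\omega}\} \subseteq C(X,[0,1])$ is countable. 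Since $\cQ$ is sufficient in the sense of \fref{dfn:EnoghQuot}, there is a precise quotient $\pi\colon X \to Y$ with $Y \in \cQ$, $\wt(Y) \leq \aleph_0$, and such that every $f_\sigma$ factors as $f_\sigma = f'_\sigma \circ \pi$.

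The final step is to check that $\{\pi(F_\sigma) : \sigma \in 2^{<\omega}\}$ is an $\varepsilon$-perfect tree in $Y$, which via \fref{prp:PerfectTree} applied to $Y$ contradicts \textit{(iv)}. Each $\pi(F_\sigma)$ is compact non-empty, and inclusions $F_\tau \subseteq F_\sigma$ for $\tau > \sigma$ yield the tree structure. The key point, which I expect to be the main (though not difficult) obstacle, is the separation $d_Y(\pi(F_{\sigma 0}),\pi(F_{\sigma 1})) > \varepsilon$. Here I use that each $F_\sigma = f_\sigma^{-1}(\{0\}) = \pi^{-1}(f'_\sigma{}^{-1}(\{0\}))$ is saturated under the equivalence relation $\sim_\pi$. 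For any $x \in F_{\sigma 0}$, $y \in F_{\sigma 1}$ the classes $[x]$ and $[y]$ are entirely contained in $F_{\sigma 0}$, $F_{\sigma 1}$ respectively, so by the formula for $d_Y$ from \fref{lem:PrecQuotRel},
\[
d_Y(\pi(x),\pi(y)) = \inf\{d_X(x',y') : x' \in [x],\, y' \in [y]\} \geq d_X(F_{\sigma 0}, F_{\sigma 1}) > \varepsilon.
\]
Taking the infimum over $x \in F_{\sigma 0}$ and $y \in F_{\sigma 1}$ preserves this strict inequality (by a standard compactness argument using \fref{lem:DistFromCompact} applied to the compact sets $\pi(F_{\sigma 0}), \pi(F_{\sigma 1})$, which realize their mutual distance), completing the verification that the image tree is $\varepsilon$-perfect and contradicting \textit{(iv)}.
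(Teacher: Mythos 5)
Your proposal is correct and follows essentially the same route as the paper: the cycle of easy implications via \fref{lem:CBImage}, and for the last implication the contrapositive argument producing an $\varepsilon$-perfect tree of zero sets via \fref{prp:PerfectTree} and \fref{lem:BasicPerfectTree}, pushing it to a countably-based precise quotient through sufficiency, and using saturation plus preciseness to preserve the separation $d_Y(\pi(F_{\sigma 0}),\pi(F_{\sigma 1})) > \varepsilon$. The only cosmetic difference is that you spell out the separation step in more detail than the paper does (and your final compactness appeal is not even needed, since every pairwise distance is already bounded below by the fixed quantity $d_X(F_{\sigma 0},F_{\sigma 1}) > \varepsilon$).
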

\begin{proof}
  The first implication follows from \fref{lem:CBImage}.
  The second and third are immediate.

  For the last, assume $X$ is not CB-analysable.
  By \fref{prp:PerfectTree}, for some $\varepsilon > 0$
  there exists an $\varepsilon$-perfect tree
  $\{F_\sigma\colon \sigma \in 2^{<\omega}\}$ in $X$.
  Let $\sB$ be the collection of zero sets of continuous functions
  $f \in C(X,[0,1])$.
  Then $\sB$ satisfies the assumptions of \fref{lem:BasicPerfectTree},
  so we may assume that each $F_\sigma$ is the zero set of some
  $f_\sigma \in C(X,[0,1])$.
  Let $\fB = \{f_\sigma\colon \sigma \in 2^{<\omega}\}$.
  By definition there is a quotient $(Y,\pi) \in \cQ$
  such that $\wt(Y) = \aleph_0$ and
  each $f_\sigma$ factors as $f'_\sigma \circ \pi$ with $f'_\sigma \in C(Y,[0,1])$.
  Let $F'_\sigma = \pi(F_\sigma)$ for each $\sigma \in 2^{<\omega}$.
  Then as $f_\sigma$ factors through $\pi$ we have $F_\sigma = \pi^{-1}(F_\sigma')$, and
  since $\pi$ is precise we see that $d_Y(F'_{\sigma0},F'_{\sigma1}) > \varepsilon$ for all
  $\sigma$, so $\{F'_\sigma\colon \sigma \in 2^{<\omega}\}$ is an $\varepsilon$-perfect tree in $Y$, and $Y$
  is not CB-analysable.
\end{proof}

We now seek to relate CB-analysability of a topometric
space, its metric density character and its topological weight.

\begin{prp}
  \label{prp:WeightDenseChar}
  Let $X$ be a CB-analysable topometric space.
  Then $\|X\| \leq \wt(X)$.
\end{prp}
\begin{proof}
  We may assume $\wt(X) \geq \aleph_0$.

  Fix a base $\cB$ of open sets for $X$, $|\cB| = \wt(X)$.
  For each $U \in \cB$ and $n< \omega$, if $U$ contains a point of maximal
  $\CB_{d,2^{-n}}$-rank we let $x_{U,n}$ be such a point, otherwise
  $x_{U,n} \in U$ is an arbitrary point.
  Let $A = \{x_{U,n}\colon U \in \cB, n<\omega\}$.
  Clearly $|A| \leq \aleph_0\cdot|\cB| = \wt(X)$, and we claim $A$ is
  metrically dense.

  Indeed, let $x \in X$ and $\varepsilon>0$.
  Then for some $n$: $\varepsilon > 2^{-n}$.
  Let $\alpha = \CB_{d,2^{-n}}(x)$, and let $U_0 \subseteq X^{(\alpha)}_{d,2^{-n}}$ be
  relatively open of diameter $\leq 2^{-n}$.
  Let $U \subseteq X$ be open so that $U_0 = U\cap X^{(\alpha)}_{d,2^{-n}}$.
  Then $\CB_{d,2^{-n}}(U) = \alpha$, so $\CB_{d,2^{-n}}(x_{U,n}) = \alpha$, whereby
  $x_{U,n} \in U_0$.
  Thus $x_{U,n} \in B(x,\varepsilon)\cap A$.
\end{proof}

The converse does not hold in general (the disjoint union of a small
non-CB-analysable space with a large CB-analysable one would be a
counterexample).
The converse does hold when $\wt(X)$ is countable:
\begin{prp}
  \label{prp:CtblWeightDenseChar}
  Let $X$ be a locally compact topometric space with a countable base.
  Then $X$ is CB-analysable if and only if $\|X\| \leq \aleph_0$
  if and only if $\|X\| < 2^{\aleph_0}$.
\end{prp}
\begin{proof}
  If $X$ is CB-analysable then $\|X\| \leq \wt(X) = \aleph_0 < 2^{\aleph_0}$.
  Conversely, assume $X$ is not CB-analysable.
  Then for some $\varepsilon > 0$ there is an $\varepsilon$-perfect tree
  $\{F_\sigma\colon \sigma < 2^{<\omega}\}$ in $X$.
  By compactness, for each $\tau \in 2^\omega$ the intersection
  $F_\tau = \bigcap_{n<\omega} F_{\tau\rest_n}$ is non-empty, and we may choose
  $x_0 \in F_\tau$.
  Then $\tau \neq \tau' \Longrightarrow d(x_\tau,x_{\tau'}) > \varepsilon$, whereby $\|X\| \geq 2^{\aleph_0}$.
\end{proof}

In conjunction with \fref{thm:CBQuot} we obtain:
\begin{cor}
  \label{cor:CBQuotWeight}
  Let $X$ be a compact topometric space with enough precise quotients,
  and let $\cQ$ be a sufficient family of precise quotients of $X$.
  \begin{enumerate}
  \item $X$ is CB-analysable.
  \item Every homomorphic image $Y$ of $X$ satisfies
    $\|Y\| \leq \wt(Y)$.
  \item Every precise quotient $Y$ of $X$ satisfies
    $\|Y\| \leq \wt(Y)$.
  \item Every $Y \in \cQ$ with a countable base is
    metrically separable.
  \end{enumerate}
\end{cor}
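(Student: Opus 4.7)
The plan is to read this off as a direct combination of \fref{thm:CBQuot}, \fref{prp:WeightDenseChar}, and \fref{prp:CtblWeightDenseChar}, going around the cycle (i)$\Rightarrow$(ii)$\Rightarrow$(iii)$\Rightarrow$(iv)$\Rightarrow$(i).

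For (i)$\Rightarrow$(ii): if $Y$ is any homomorphic (i.e., continuous, uniformly continuous) image of $X$, then by \fref{lem:CBImage} (applied to the surjective morphism $X \to Y$) the space $Y$ is CB-analysable, and \fref{prp:WeightDenseChar} then gives $\|Y\| \leq \wt(Y)$. The implication (ii)$\Rightarrow$(iii) is immediate since every precise quotient is in particular a surjective morphism by \fref{lem:PreciseMapping}. For (iii)$\Rightarrow$(iv), note that compactness of $X$ forces every quotient $Y$ to be compact and hence locally compact; if $Y \in \cQ$ has a countable base then $\wt(Y) \leq \aleph_0$, so by (iii) $\|Y\| \leq \aleph_0$, i.e., $Y$ is metrically separable.

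The only non-trivial step is (iv)$\Rightarrow$(i). Here I would invoke \fref{thm:CBQuot}, which reduces CB-analysability of $X$ to CB-analysability of every $Y \in \cQ$ with a countable base. Such a $Y$ is compact (hence locally compact) with $\wt(Y) \leq \aleph_0$, so \fref{prp:CtblWeightDenseChar} applies: CB-analysability is equivalent to $\|Y\| < 2^{\aleph_0}$. By hypothesis (iv), $\|Y\| \leq \aleph_0 < 2^{\aleph_0}$, so $Y$ is CB-analysable, and we are done.

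No serious obstacle is anticipated; the essential work has already been done in the preceding results, and the main point of the corollary is to package the density-character characterisation of CB-analysability in a form that makes quantification over all homomorphic images, or all precise quotients, or just the sufficient family $\cQ$, all equivalent. The only subtlety worth flagging is the use of compactness of $X$ in passing from (iii) to (iv): one needs that quotients of a compact space are compact so that ``countable base'' in $\cQ$ actually entails local compactness, which is what \fref{prp:CtblWeightDenseChar} requires.
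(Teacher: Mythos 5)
Your proposal is correct and is essentially the proof the paper intends: the corollary is stated without proof, introduced by ``In conjunction with \fref{thm:CBQuot} we obtain'', and your cycle simply combines \fref{thm:CBQuot} with \fref{lem:CBImage}, \fref{prp:WeightDenseChar} and \fref{prp:CtblWeightDenseChar} exactly as that framing suggests. The one subtlety you flag (compactness of the quotients so that a countable base yields local compactness for \fref{prp:CtblWeightDenseChar}) is real but handled correctly.
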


\subsection{Comparing Cantor-Bendixson ranks of two spaces}

Earlier we compared the Cantor-Bendixson ranks of two topometric
spaces admitting a special relation, such as an inclusion or a
surjective homomorphism from one to the other.
Such (and other) homomorphisms can be identified with their graphs,
which are a special kind of closed relations between two spaces.
We shall now explore inequalities of Cantor-Bendixson ranks between
spaces admitting an arbitrary closed relation.

\begin{ntn}
  Let $X,Y$ be two compact spaces, $R \subseteq X \times Y$ a closed relation.
  For $x \in X$ and $A \subseteq Y$ we define:
  \begin{align*}
    R_x & = \{y \in Y\colon (x,y) \in R\}, \\
    R^{\forall A} & = \{x \in X\colon R_x \subseteq A\}, \\
    R^{\exists A} & = \{x \in X\colon R_x \cap A \neq \emptyset\}.
  \end{align*}
\end{ntn}

Note that:
\begin{enumerate}
\item For all $A \subseteq Y$:
  $R^{\forall(Y\setminus A)} = X \setminus R^{\exists A}$.
\item If $A \subseteq Y$ is closed then $R^{\exists A}$ is closed.
\item If $A \subseteq Y$ is open then $R^{\forall A}$ is open.
\end{enumerate}

For example, if $Y \subseteq X$ then
$R = \Delta_Y \subseteq Y \times X$ is a closed relation,
$R_y = \{y\}$, $R^{\forall A} = R^{\exists A} = A\cap Y$.
If $\pi\colon X \to Y$ is a projection, then
$R = \{(\pi(x),x)\colon x\in X\} \subseteq Y\times X$ is
again a closed relation, $R_y = \pi^{-1}(y)$, $R^{\exists A} = \pi(A)$.

The formalism of closed relations allows us to compare
Cantor-Bendixson ranks of spaces (or of subsets thereof) in very
general situations.
In particular, the following result, albeit more technical, is a
proper generalisation of \fref{lem:CBImage} and a partial
generalisation of \fref{lem:CBSubspace}.

Let us fix:
\begin{itemize}
\item A pair of locally compact topometric spaces $X,Y$.
\item A closed relation $R \subseteq X \times Y$ such that $R_x$ is compact for
  every $x \in X$.
\item  $\varepsilon,\delta > 0$ such that for all $(x,y),(x',y') \in R$:
  if $d_Y(y,y') \leq \delta$ then $d_X(x,x') \leq \varepsilon$.
\end{itemize}

\begin{lem}
  \label{lem:SmallRelPullback}
  If $U \subseteq Y$ is open and $\delta$-finite ($\delta$-bounded) then
  $R^{\forall U} \cap (R^{\exists Y})^\circ$ is open and
  $\varepsilon$-finite ($\varepsilon$-bounded).
\end{lem}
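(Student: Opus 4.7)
The proof splits naturally into two independent parts --- the size claim (that the set in question is $\varepsilon$-finite, respectively $\varepsilon$-bounded) and the openness claim --- which I would attack in that order.

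The size claim is a direct application of the modulus in its contrapositive form: if $(x,y),(x',y')\in R$ and $d_X(x,x')>\varepsilon$ then $d_Y(y,y')>\delta$. Assuming $U$ is $\delta$-$n$-finite, suppose for contradiction that $x_0,\ldots,x_n\in R^{\forall U}\cap(R^{\exists Y})^\circ$ are pairwise at $d_X$-distance greater than $\varepsilon$. Since each $x_i\in R^{\exists Y}$, pick $y_i\in R_{x_i}$; since each $x_i\in R^{\forall U}$, each $y_i$ lies in $U$. The contrapositive modulus yields $d_Y(y_i,y_j)>\delta$ for $i\neq j$, producing $n+1$ pairwise $>\delta$-separated points of $U$ and contradicting $\delta$-$n$-finiteness. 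The $\delta$-bounded case runs identically with $\omega$-sequences.

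For the openness claim, my aim is to show that $R^{\forall U}$ is itself open (as asserted in Note~3 preceding the lemma), so that its intersection with the open set $(R^{\exists Y})^\circ$ is automatically open. Given $x\in R^{\forall U}$, compactness of $R_x$ and local compactness of $Y$ let me choose an open $V$ with $R_x\subseteq V\subseteq \overline V\subseteq U$ and $\overline V$ compact. The compact set $\{x\}\times(\overline V\setminus V)$ is disjoint from the closed relation $R$ (since $R_x\subseteq V$), so a tube-lemma argument produces an open neighbourhood $W\ni x$ with $R\cap(W\times(\overline V\setminus V))=\emptyset$; hence $R_{x'}\cap\overline V\subseteq V\subseteq U$ for every $x'\in W$.

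The hard part is ruling out points of $R_{x'}$ outside $\overline V$. This is where I would combine the modulus with the $\delta$-boundedness of $U$: since $U$ has finite diameter and the contrapositive modulus forces any $y'\in R_{x'}$ with $d_Y(y',R_x)>\delta$ to lie over an $x'$ with $d_X(x',x)>\varepsilon$, the closedness of ${\overline B}(R_x,\delta)$ (by \fref{lem:ClsdMetNeighb}), together with local compactness of $X$ and closedness of $R$, permits a no-accumulation-point argument confining the potentially escaping portion of $R_{x'}$ to a compact region and contradicting the choice of $V$, thereby forcing $R_{x'}\subseteq U$ for $x'$ in a sufficiently small neighbourhood of $x$.
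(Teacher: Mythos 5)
Your first half --- the $\varepsilon$-finiteness/$\varepsilon$-boundedness claim --- is exactly the paper's argument: membership in $(R^{\exists Y})^\circ$ guarantees $R_{x_i}\neq\emptyset$, membership in $R^{\forall U}$ puts the witnesses $y_i$ in $U$, and the contrapositive of the modulus transfers the $\varepsilon$-separation of the $x_i$ to a $\delta$-separation of the $y_i$. Nothing to add there.

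The openness half is where the proposal goes wrong. The paper simply invokes the observation stated just before the lemma (item~3 of the note: $R^{\forall A}$ is open for $A$ open, which in the compact setting follows because $R^{\exists}$ of a closed set is the image of a closed set under the closed projection $X\times Y\to X$), and intersects with the open set $(R^{\exists Y})^\circ$. You instead set out to re-prove this from scratch, and the crucial final step does not work as described. First, the modulus is applied in the wrong direction: the hypothesis says $d_Y(y,y')\leq\delta \Longrightarrow d_X(x,x')\leq\varepsilon$ for $(x,y),(x',y')\in R$, i.e.\ small $d_Y$ forces small $d_X$; your claim that $d_Y(y',R_x)>\delta$ forces $d_X(x',x)>\varepsilon$ is the converse implication and is not available. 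Second, the premise that a $\delta$-bounded open set ``has finite diameter'' is false in this setting (two points at distance $\infty$ form a $\delta$-bounded set), and even where the diameter is finite, metric boundedness gives no topological compactness --- the metric only refines the topology, so a metrically bounded portion of a fibre can still escape every compact subset of $Y$. Finally, the decisive ``no-accumulation-point argument confining the potentially escaping portion of $R_{x'}$'' is asserted rather than carried out; ruling out points of $R_{x'}$ outside $\overline V$ is precisely the content of the openness claim, and no mechanism for it is actually supplied. The tube-lemma part (controlling $R_{x'}\cap\overline V$) is fine, but the proof of openness is incomplete, and the intended route is simply to cite the note rather than to rederive it from the metric hypotheses.
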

\begin{proof}
  Since both $R^{\forall U}$ and $(R^{\exists Y})^\circ$ are open so is their
  intersection.
  Assume now that for some ordinal $\alpha \leq \omega$ we have
  $x_i \in R^{\forall U} \cap (R^{\exists Y})^\circ$
  for $i \leq \alpha$ satisfying
  $i < j \leq \alpha \Longrightarrow d(x_i,x_j) > \varepsilon$.
  Then we can find $y_i \in R_{x_i} \subseteq U$ for all $i \leq \alpha$
  which necessarily satisfy
  $i < j \leq \alpha \Longrightarrow d(y_i,y_j) > \delta$.
\end{proof}

This remains true if we replace $\delta$-finite with
diameter $\leq \delta$ (i.e., $\delta$-$1$-finite).
However, it will not be of much use since the family of open
sets of diameter $\leq \delta$ is not closed under finite unions.

\begin{lem}
  \label{lem:RelCBAnalysis}
  Under the assumptions above, let
  $* \in \{f,b\}$, and set
  $X^{(\alpha)} = X^{(\alpha)}_{*,\varepsilon}$,
  $Y^{(\alpha)} = Y^{(\alpha)}_{*,\delta}$.
  Then for every ordinal $\alpha$:
  $(R^{\exists Y})^\circ \cap X^{(\alpha)} \subseteq R^{\exists Y^{(\alpha)}}$.
\end{lem}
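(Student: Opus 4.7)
The plan is to argue by transfinite induction on $\alpha$, with the successor step being the only substantive case. The base case $\alpha=0$ is trivial since $Y^{(0)}=Y$, and the limit case is handled as follows: if $x \in (R^{\exists Y})^\circ \cap X^{(\alpha)}$, then $x \in X^{(\beta)}$ for every $\beta<\alpha$, so by the induction hypothesis $R_x \cap Y^{(\beta)} \neq \emptyset$ for all $\beta<\alpha$; these are closed subsets of the compact set $R_x$ forming a decreasing family, so their intersection $R_x \cap Y^{(\alpha)}$ is nonempty.

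For the successor step, suppose toward a contradiction that $x \in (R^{\exists Y})^\circ \cap X^{(\alpha+1)}$ but $R_x \cap Y^{(\alpha+1)} = \emptyset$. By the induction hypothesis, $K := R_x \cap Y^{(\alpha)}$ is a nonempty compact set contained in $Y^{(\alpha)} \setminus Y^{(\alpha+1)}$. By definition of the derivative, this complement is covered by sets of the form $V_i \cap Y^{(\alpha)}$ with $V_i$ open in $Y$ and $V_i \cap Y^{(\alpha)}$ being $\delta$-finite (respectively $\delta$-bounded). By compactness of $K$, finitely many suffice; their union $V$ is open in $Y$ with $K \subseteq V$, and $V \cap Y^{(\alpha)}$ is $\delta$-finite (resp.\ $\delta$-bounded) since these smallness properties pass to finite unions.

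The key transition I expect to be the main obstacle is that $V$ contains only $K = R_x \cap Y^{(\alpha)}$, not all of $R_x$, whereas to apply the pullback idea of \fref{lem:SmallRelPullback} we need an open set containing $R_x$ entirely. I resolve this by observing that $R_x$ is compact and $Y^{(\alpha)}\setminus V$ is closed in $Y$ and disjoint from $R_x$; by regularity of the locally compact Hausdorff space $Y$, I find an open $V' \subseteq Y$ with $R_x \subseteq V'$ and $V' \cap Y^{(\alpha)} \subseteq V \cap Y^{(\alpha)}$, so $V' \cap Y^{(\alpha)}$ is itself $\delta$-finite (resp.\ $\delta$-bounded).

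Finally, I set $W := R^{\forall V'} \cap (R^{\exists Y})^\circ$, which is open in $X$ and contains $x$, and argue that $W \cap X^{(\alpha)}$ is $\varepsilon$-finite (resp.\ $\varepsilon$-bounded) in $X^{(\alpha)}$ by essentially repeating the argument of \fref{lem:SmallRelPullback}: given points $x_i \in W \cap X^{(\alpha)}$ with pairwise $d_X$-distance $>\varepsilon$, the induction hypothesis gives $y_i \in R_{x_i} \cap Y^{(\alpha)} \subseteq V' \cap Y^{(\alpha)}$, and the modulus of continuity forces $d_Y(y_i,y_j)>\delta$, bounding their number. This exhibits $x$ inside a relatively open $\varepsilon$-finite (resp.\ bounded) subset of $X^{(\alpha)}$, contradicting $x \in X^{(\alpha+1)}$.
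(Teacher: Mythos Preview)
Your proof is correct and follows the same inductive skeleton as the paper: trivial base case, compactness for the limit step, and for the successor step covering $K = R_x \cap Y^{(\alpha)}$ by finitely many small open sets and pulling back via \fref{lem:SmallRelPullback}.

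The one place where you diverge is exactly the ``main obstacle'' you flagged. You resolve it by a separation argument in $Y$, enlarging $V$ to an open $V' \supseteq R_x$ with $V' \cap Y^{(\alpha)} \subseteq V \cap Y^{(\alpha)}$, and then rerunning the proof of \fref{lem:SmallRelPullback} by hand (using the induction hypothesis to pick $y_i \in R_{x_i} \cap Y^{(\alpha)}$). The paper sidesteps this entirely: it introduces the restricted relation $R^{(\alpha)} = R \cap (X^{(\alpha)} \times Y^{(\alpha)})$, takes $U \subseteq Y^{(\alpha)}$ to be the small open set covering $K$, and applies \fref{lem:SmallRelPullback} directly to $R^{(\alpha)}$. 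Since $(R^{(\alpha)})_x = R_x \cap Y^{(\alpha)} = K \subseteq U$, one has $x \in (R^{(\alpha)})^{\forall U}$ automatically, with no need to enlarge $U$; the induction hypothesis enters only to guarantee $(R^{\exists Y})^\circ \cap X^{(\alpha)} \subseteq (R^{(\alpha)})^{\exists Y^{(\alpha)}}$, so that the $\varepsilon$-finiteness transfer goes through. Your route works but the paper's is shorter and reuses the lemma cleanly rather than reproving it inline.
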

\begin{proof}
  Indeed for $\alpha = 0$ this holds by assumption, and for $\alpha$ limit 
  a compactness argument shows that
  $R^{\exists Y^{(\alpha)}}
  = R^{\exists \bigcap_{\beta<\alpha} Y^{(\beta)}}
  = \bigcap_{\beta<\alpha} R^{\exists Y^{(\beta)}}$.

  Assume now that
  $(R^{\exists Y})^\circ \cap X^{(\alpha)}
  \subseteq R^{\exists Y^{(\alpha)}}$, and we shall
  prove this for $\alpha+1$.
  Let $x \in ((R^{\exists Y})^\circ\cap X^{(\alpha)})
  \setminus R^{\exists Y^{(\alpha+1)}}$: we need to show
  that $x \notin X^{(\alpha+1)}$.
  Let $K = R_x \cap Y^{(\alpha)} = (R^{(\alpha)})_x$,
  where $R^{(\alpha)} = R \cap (X^{(\alpha)} \times Y^{(\alpha)})$.

  Then $K$ is compact, and by assumption on $x$:
  $K \neq \emptyset$, $K \cap Y^{(\alpha+1)} = \emptyset$.
  The latter means that $K$ admits a covering
  $K \subseteq \bigcup_{i < \lambda} U_i$
  where each $U_i \subseteq Y^{(\alpha)}$ is relatively open and
  $\delta$-finite in case $* = f$ ($\delta$-bounded in case $* = b$).
  By compactness of $K$ we may take this union to be finite.
  Since a finite union of $\delta$-finite ($\delta$-bounded) sets is such,
  we find that $K \subseteq U$ where $U \subseteq Y^{(\alpha)}$
  is open and $\delta$-finite ($\delta$-bounded).
  Let
  $U' = (R^{(\alpha)})^{\forall U} \cap (R^{\exists Y})^\circ
  \subseteq X^{(\alpha)}$.
  Then $x \in U'$, and by \fref{lem:SmallRelPullback} applied
  to $R^{(\alpha)}$, $U'$ is open in $X^{(\alpha)}$ and $\varepsilon$-finite
  ($\varepsilon$-bounded).
  Thus $U' \cap X^{(\alpha+1)} = \emptyset$
  and $x \notin X^{(\alpha+1)}$, as desired.
\end{proof}

\begin{thm}
  \label{thm:RelCBRank}
  Let $X,Y$ be two locally compact topometric spaces.
  Let $R \subseteq X \times Y$ be a closed relation such that
  $R_x$ is compact for
  all $x \in X$.
  Let $\varepsilon,\delta > 0$ be such that for all
  $(x,y),(x',y') \in R$:
  if $d_Y(y,y') \leq \delta$ then $d_X(x,x') \leq \varepsilon$.
  Let $K \subseteq X$ be compact and $F \subseteq Y$ any set such that
  $K \subseteq (R^{\exists Y})^\circ \cap R^{\forall F}$.
  Then $\CB^X_{*,\varepsilon}(K) \leq \CB^Y_{*,\delta}(F)$ for $* \in \{f,b\}$.
\end{thm}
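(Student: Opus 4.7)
The plan is to reduce the statement directly to \fref{lem:RelCBAnalysis}, which has already done the heavy lifting by tracking the closed relation $R$ along the transfinite Cantor--Bendixson derivative. The only remaining task is to convert the inclusion $(R^{\exists Y})^\circ \cap X^{(\alpha)}_{*,\varepsilon} \subseteq R^{\exists Y^{(\alpha)}_{*,\delta}}$ into the desired inequality of ranks, exploiting that $K$ is compact (so its CB-rank is attained as a maximum) and that the containment $K \subseteq R^{\forall F}$ pushes every witness fibrewise into $F$.

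First I would set $\alpha = \CB^X_{*,\varepsilon}(K)$. Because each $X^{(\beta)}_{*,\varepsilon}$ is closed, the sets $K \cap X^{(\beta)}_{*,\varepsilon}$ form a decreasing chain of compact subsets of $K$, so if $\alpha$ is an ordinal the supremum is attained; pick $x \in K \cap X^{(\alpha)}_{*,\varepsilon}$. By hypothesis $x \in (R^{\exists Y})^\circ$, so \fref{lem:RelCBAnalysis} yields $x \in R^{\exists Y^{(\alpha)}_{*,\delta}}$; that is, there exists $y \in R_x \cap Y^{(\alpha)}_{*,\delta}$. Since $x \in K \subseteq R^{\forall F}$, we have $R_x \subseteq F$, whence $y \in F \cap Y^{(\alpha)}_{*,\delta}$ and $\CB^Y_{*,\delta}(F) \geq \alpha$. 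If instead $\alpha = \infty$, the very same argument applied at every ordinal $\beta$ produces witnesses $y_\beta \in F \cap Y^{(\beta)}_{*,\delta}$, so $\CB^Y_{*,\delta}(F) = \infty$ as well, which is what is needed.

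No step here is really an obstacle: the conceptual work is carried entirely by \fref{lem:RelCBAnalysis}, whose successor case rests on \fref{lem:SmallRelPullback} together with the fact that finite unions of $\delta$-finite (resp.\ $\delta$-bounded) open sets remain $\delta$-finite (resp.\ $\delta$-bounded) --- precisely why the hypothesis $* \in \{f,b\}$ appears, and why the analogous inequality for $\CB_{d,\varepsilon}$ is not claimed. Once that machinery is in place, what remains is routine bookkeeping with the operators $R^{\forall F}$ and $(R^{\exists Y})^\circ$ restricted to the compact set $K$.
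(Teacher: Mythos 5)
Your proof is correct and follows essentially the same route as the paper: both reduce the statement to \fref{lem:RelCBAnalysis} via the chain $x \in K \cap X^{(\alpha)}_{*,\varepsilon} \subseteq (R^{\exists Y})^\circ \cap X^{(\alpha)}_{*,\varepsilon} \subseteq R^{\exists Y^{(\alpha)}_{*,\delta}}$ and then use $K \subseteq R^{\forall F}$ to place the witness $y \in R_x \cap Y^{(\alpha)}_{*,\delta}$ inside $F$. The paper simply runs the argument for every $\alpha$ with $\CB^X_{*,\varepsilon}(K) \geq \alpha$ rather than splitting into the attained and $\infty$ cases, but this is only a cosmetic difference.
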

\begin{proof}
  Assume that
  $\CB^X_{*,\varepsilon}(K) \geq \alpha$, i.e., there exists
  $x \in K \cap X^{(\alpha)}_{*,\varepsilon}
  \subseteq (R^{\exists Y})^\circ \cap X^{(\alpha)}_{*,\varepsilon}
  \subseteq R^{\exists Y^{(\alpha)}_{*,\delta}}$,
  so there is $y \in R_x \cap Y^{(\alpha)}_{*,\delta}$.
  Since $K \subseteq R^{\forall F}$ we have $y \in R_x \subseteq F$.
  Thus $F \cap Y^{(\alpha)}_{*,\delta} \neq \emptyset$ and $\CB^Y_{*,\delta}(F) \geq \alpha$ as well.
\end{proof}

\begin{cor}
  \fref{lem:CBImage} can be obtained as a special case of
  \fref{thm:RelCBRank}.
  In fact it is enough to assume that $X$ and $Y$ are locally compact
  and $\pi\colon X\to Y$ is surjective with compact fibres.
\end{cor}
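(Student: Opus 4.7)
The plan is to apply \fref{thm:RelCBRank} to the graph of $\pi^{-1}$ viewed as a closed relation from $Y$ to $X$. Concretely, set $R := \{(y,x) \in Y \times X \colon \pi(x) = y\}$. Since $\pi$ is continuous, $R$ is closed (it is the preimage of the diagonal $\Delta_Y$ under the map $(\id_Y,\pi) \colon Y \times X \to Y \times Y$), and by hypothesis each fibre $R_y = \pi^{-1}(y)$ is compact. Surjectivity of $\pi$ gives $R^{\exists X} = Y$, so $(R^{\exists X})^\circ = Y$ trivially. Moreover, since the fibres of $\pi$ are non-empty and pairwise disjoint, one has $R^{\forall\, \pi^{-1}(y_0)} = \{y_0\}$ for every $y_0 \in Y$.

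Next I would supply the modulus hypothesis required by \fref{thm:RelCBRank}. Fix $y_0 \in Y$ and pick a compact neighbourhood $V \subseteq X$ of $\pi^{-1}(y_0)$, which exists by local compactness of $X$ together with compactness of the fibre. Since $\pi$ is a morphism, \fref{lem:LocUnifCont} implies it is uniformly continuous on the compact set $V$, yielding for each $\varepsilon > 0$ some $\Delta(\varepsilon) > 0$ such that whenever $(y,x),(y',x') \in R$ with $x,x' \in V$ and $d_X(x,x') \leq \Delta(\varepsilon)$, we have $d_Y(y,y') = d_Y(\pi(x),\pi(x')) \leq \varepsilon$. Taking $K := \{y_0\}$ and $F := \pi^{-1}(y_0)$, and restricting from $X$ to the compact space $V$ (which does not change the relevant Cantor-Bendixson rank at the point $y_0$ since the entire fibre sits in $V^\circ$), all the hypotheses of \fref{thm:RelCBRank} are satisfied, with the theorem's $X$ and $Y$ corresponding to our $Y$ and $X$ respectively, and with the theorem's $\delta$ taken to be $\Delta(\varepsilon)$.

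The conclusion of \fref{thm:RelCBRank} then reads $\CB^Y_{*,\varepsilon}(\{y_0\}) \leq \CB^X_{*,\Delta(\varepsilon)}(\pi^{-1}(y_0))$ for $* \in \{f,b\}$, which is exactly the pointwise version of the inclusion $Y^{(\alpha)}_{f,\varepsilon} \subseteq \pi\!\left(X^{(\alpha)}_{f,\Delta(\varepsilon)}\right)$ stated in \fref{lem:CBImage}. Since $y_0$ was arbitrary, the full statement follows. The only delicate point is obtaining a workable modulus $\Delta$ in the locally compact setting: in the original compact case of \fref{lem:CBImage} uniform continuity is global, while here one must first shrink to a compact neighbourhood of the compact fibre; once this localisation is done the abstract result does all the remaining work.
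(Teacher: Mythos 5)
Your core application of \fref{thm:RelCBRank} --- the transposed graph $R = \{(\pi(x),x)\colon x \in X\} \subseteq Y \times X$, the choices $K = \{y_0\}$ and $F = \pi^{-1}(y_0)$, and the observation that surjectivity gives $R^{\exists X} = Y$ so that the interior condition is vacuous --- is exactly the paper's argument. The problem is the localization step that you yourself flag as the delicate point. It is, first of all, unnecessary: the corollary is to be read against the statement of \fref{lem:CBImage}, whose hypotheses include a \emph{global} modulus $\Delta$ with $d_X(x,x') \leq \Delta(\varepsilon) \Rightarrow d_Y(\pi(x),\pi(x')) \leq \varepsilon$ (indeed the conclusion of that lemma is phrased in terms of $\Delta$, so this hypothesis cannot be dropped); the generalisation only relaxes compactness of the ambient spaces to local compactness plus compact fibres. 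With the global $\Delta$ in hand you apply the theorem to the full relation and you are done.

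Second, as written the localization breaks the hypotheses of \fref{thm:RelCBRank}. After replacing $X$ by a compact neighbourhood $V$ of the fibre, the relation becomes $R' = R \cap (Y \times V)$ and one has $R'^{\exists V} = \pi(V)$; the theorem then requires $y_0 \in \bigl(\pi(V)\bigr)^\circ$, which does not follow from your hypotheses: $\pi$ need not be open or proper near $y_0$, and one cannot in general choose $V$ to contain $\pi^{-1}(W)$ for any neighbourhood $W$ of $y_0$, since that would force $\overline{\pi^{-1}(W)}$ to be compact, which compactness of the single fibre does not give. For instance, take $X = \bigl([0,1)\times\{0\}\bigr) \cup \{(1,1)\} \subseteq \bR^2$ and $Y=[0,1]$ as minimal topometric spaces with $\pi$ the first projection: every compact $V \subseteq X$ meets $[0,1)\times\{0\}$ in a set bounded away from $1$, so $\pi(V)$ has empty interior around the point $1$, whose fibre is the isolated point $(1,1)$. (Your parenthetical claim that restriction to $V$ does not change $\CB^X_{f,\Delta(\varepsilon)}(\pi^{-1}(y_0))$ is also more than you need: by \fref{lem:CBSubspace} the rank computed in $V$ is at most the rank computed in $X$, which is already the right direction for the inequality.) Deleting the localization and invoking the assumed global modulus recovers the paper's proof verbatim.
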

\begin{proof}
  We follow the notations and assumptions of \fref{lem:CBImage}.
  Let $R \subseteq Y \times X$ be the transposed graph of $\pi$, i.e.,
  $R = \{(\pi(x),x)\colon x \in X\}$.
  Then $R_y = \pi^{-1}(y)$ is compact for all $y \in Y$ by assumption.
  As $\pi$ is surjective, $R^{\exists X} = Y$.
  Let $\varepsilon > 0$ and $y \in Y$, and set $\delta = \Delta(\varepsilon)$,
  $K = \pi^{-1}(y) = R_y \subseteq X$.
  Then $y \in (R^{\exists X})^\circ \cap R^{\forall K}$, and by
  \fref{thm:RelCBRank}:
  $\CB^Y_{f,\varepsilon}(y) \leq \CB^X_{f,\delta}(\pi^{-1}(y))$.
\end{proof}
The same argument shows that 
$\CB^Y_{b,\varepsilon}(y) \leq \CB^X_{b,\delta}(\pi^{-1}(y))$ as well, improving
\fref{lem:CBImage}.

In particular, if $\pi\colon X\to Y$ is precise, then $\CB_{f,\varepsilon}$ ranks
go down, in the sense that
$\CB^Y_{f,\varepsilon}(K) \leq \CB^X_{f,\varepsilon}(\pi^{-1}(K))$
for $K \subseteq Y$.
We now seek sufficient conditions for equality.

\begin{dfn}
  Let $\pi\colon X\to Y$ be a precise surjective mapping
  of compact spaces.
  We say that $X$ is \emph{homogeneous over $Y$} (or more
  precisely, \emph{over $\pi$}) if for every
  $K \subseteq U \subseteq Y$, where $K$ is
  compact and $U$ open, and every
  countable set $X_0 \subseteq \pi^{-1}(K)$, there is an isometric automorphism
  $f$ of $X$ such that $\pi\circ f\rest_{X_0}$ is isometric with image in
  $U$.
\end{dfn}

(All the results below hold if we replace ``compact'' with ``locally
compact'' and require in addition that if $K \subseteq Y$ is compact then so
is $\pi^{-1}(K)$.)

\begin{prp}
  \label{prp:HomogTypeSpace}
  Let $M \preceq N$ be two structures, $M$ approximately
  $\aleph_0$-saturated and $N$ strongly $\aleph_1$-homogeneous.
  Then $\tS_n(N)$ is homogeneous over $\tS_n(M)$ and
  $\tS_\varphi(N)$ is homogeneous over $\tS_\varphi(M)$, each with the
  respective standard metric.
\end{prp}
\begin{proof}
  Let $K \subseteq U \subseteq \tS_n(M)$ be closed and open, respectively,
  and let $X_0 \subseteq \pi^{-1}(K) \subseteq \tS_n(N)$ be countable,
  say $X_0 = \{p_i\colon i < \omega\}$.
  Then one can find a formula $\varphi(\bar x,\bar b)$ with parameters
  $\bar b \in M$ such that
  $K \subseteq [\varphi(\bar x,\bar b) = 0] \subseteq [\varphi(\bar x,\bar b) \leq 1/2] \subseteq U$.
  Then there is some $\varepsilon > 0$ such that for all $\bar b' \in M$,
  if $d(\bar b,\bar b') < \varepsilon$ then
  $[\varphi(\bar x,\bar b') = 0] \subseteq U$ as well.

  For each $i < j < \omega$ one can find a countable set
  $A_{ij} \subseteq N$ such that
  $d(p_i\rest_{A_{ij}},p_j\rest_{A_{ij}}) = d(p_i,p_j)$.
  Then $A = \bigcup_{i<j< \omega} A_{ij}$ is countable.
  By approximate $\aleph_0$-saturation of $M$
  (see \cite[Fact~1.4]{BenYaacov-Usvyatsov:dFiniteness})
  we can find $\bar b'A' \subseteq M$
  such that
  $A\bar b \equiv A'\bar b'$ and $d(\bar b,\bar b') < \varepsilon$.
  By homogeneity of $N$ there exists $f \in \Aut(N)$ sending
  $A\bar b \mapsto A'\bar b'$.
  Then $f$ induces an isometric automorphism $\tilde f$
  of $\tS_n(N)$,
  $\pi \circ \tilde f(X_0) \subseteq [\varphi(\bar x,\bar b') = 0] \subseteq U$,
  and since $A' \subseteq M$ we get that $\pi \circ \tilde f$ is isometric on
  $X_0$.

  The proof for
  $\pi\colon \tS_\varphi(N) \to \tS_\varphi(M)$
  is essentially the same.
\end{proof}

\begin{lem}
  \label{lem:HomogIncCB}
  Let $\pi\colon X \to Y$ be precise,
  and assume $X$ is homogeneous over $Y$.
  Let $x \in X$ and $y = \pi(x) \in Y$.
  Then $\CB^Y_{*,\varepsilon}(y) \geq \CB^X_{*,\varepsilon}(x)$
  for all $\varepsilon > 0$, $* \in \{d,f,b\}$.
\end{lem}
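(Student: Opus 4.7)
The plan is to show by transfinite induction on $\alpha$ that $x \in X^{(\alpha)}_{*,\varepsilon}$ implies $y = \pi(x) \in Y^{(\alpha)}_{*,\varepsilon}$, uniformly in $* \in \{d,f,b\}$. The case $\alpha = 0$ is tautological, and the limit case follows immediately from the induction hypothesis together with the definition of the derivative sequence as a decreasing intersection.

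For the successor step I would argue by contradiction: assume $x \in X^{(\alpha+1)}_{*,\varepsilon}$ but $y \notin Y^{(\alpha+1)}_{*,\varepsilon}$. Then there is an open $V' \subseteq Y$ with $y \in V'$ such that $V' \cap Y^{(\alpha)}_{*,\varepsilon}$ is small in the relevant sense: of diameter $\leq \varepsilon$ when $* = d$; $\varepsilon$-$n$-finite for some fixed $n$ when $* = f$; $\varepsilon$-bounded when $* = b$. Since $Y$ is compact Hausdorff, and hence regular, I would shrink to an open $V''$ with $y \in V'' \subseteq \bar V'' \subseteq V'$. The preimage $\pi^{-1}(V'')$ is an open neighbourhood of $x$ in $X$, and because $x \in X^{(\alpha+1)}_{*,\varepsilon}$ the set $\pi^{-1}(V'') \cap X^{(\alpha)}_{*,\varepsilon}$ fails to be small. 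I therefore extract $X_0 \subseteq \pi^{-1}(V'') \cap X^{(\alpha)}_{*,\varepsilon}$ whose distinct members are more than $\varepsilon$ apart, of cardinality $2$ when $* = d$, $n + 1$ when $* = f$, and countably infinite when $* = b$.

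Now I would apply homogeneity with $K = \bar V'' \subseteq V' = U$ and $X_0 \subseteq \pi^{-1}(K)$, obtaining an isometric automorphism $f$ of $X$ such that $\pi \circ f \rest_{X_0}$ is isometric with image in $V'$. Isometry of $\pi \circ f \rest_{X_0}$ gives $d_Y(\pi f(x_i),\pi f(x_j)) = d_X(x_i,x_j) > \varepsilon$ for distinct $i,j$. Interpreting ``isometric automorphism'' as an automorphism of $X$ in the topometric category, $f$ preserves each derivative $X^{(\beta)}_{*,\varepsilon}$, so $f(x_i) \in X^{(\alpha)}_{*,\varepsilon}$; the induction hypothesis then places each $\pi f(x_i)$ in $Y^{(\alpha)}_{*,\varepsilon}$. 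Hence $V' \cap Y^{(\alpha)}_{*,\varepsilon}$ contains an $\varepsilon$-separated family of the size that smallness was supposed to forbid, which is the desired contradiction.

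The main obstacle is bridging the open set $V'$ coming from the failure of $y \in Y^{(\alpha+1)}_{*,\varepsilon}$ to a compact $K \subseteq V'$ required by the homogeneity axiom; this is why the regularity of $Y$ must enter through the auxiliary $V''$. A secondary, tacit point is that an isometric automorphism of $X$ must be read as preserving the topology as well, since otherwise $f$ need not stabilise $X^{(\alpha)}_{*,\varepsilon}$ and the final inductive lift of the points $f(x_i)$ would break.
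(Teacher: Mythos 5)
Your proof is correct and follows essentially the same route as the paper's: induction on $\alpha$, with the successor step handled by shrinking the witnessing open neighbourhood of $y$ to get a compact set inside it, pulling back along $\pi$, extracting an $\varepsilon$-separated family from the non-small preimage, and using homogeneity to push that family forward isometrically into $U \cap Y^{(\alpha)}_{*,\varepsilon}$, contradicting smallness via the induction hypothesis. The only difference is cosmetic: the paper writes out only the case $* = f$ (declaring the others similar) and phrases the step as covering the whole fibre $\pi^{-1}(y)$ rather than arguing by contradiction about the single point $x$.
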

\begin{proof}
  We only prove the case $* = f$, the others being similar.
  It is enough to show by induction on $\alpha$ that
  $\pi(X^{(\alpha)}_{f,\varepsilon}) \subseteq Y^{(\alpha)}_{f,\varepsilon}$.

  For $\alpha = 0$ or limit this is clear.
  For $\alpha+1$, assume that
  $U \subseteq Y$ is open such that
  $U \cap Y^{(\alpha)}_{f,\varepsilon}$ is $\varepsilon$-finite, say
  $\varepsilon$-$n$-finite.
  Let $y \in U \cap Y^{(\alpha)}_{f,\varepsilon}$.
  Then it will suffice to find $\pi^{-1}(y) \subseteq V$ open such that
  $V \cap X^{(\alpha)}_{f,\varepsilon}$ is $\varepsilon$-$n$-finite as well.

  Indeed, find $U' \subseteq Y$ open such that
  $y \in U' \subseteq \bar U' \subseteq U$, and let $V = \pi^{-1}(U')$.
  We claim that
  $V \cap X^{(\alpha)}_{f,\varepsilon}$ is $\varepsilon$-$n$-finite.
  If not, then there are $x_0,\ldots,x_n \in V \cap X^{(\alpha)}_{f,\varepsilon}$
  such that $i < j \leq n \Longrightarrow d(x_i,x_j) > \varepsilon$.
  By the homogeneity assumption there exists a precise automorphism
  $f$ of $X$ such that $\pi\circ f\rest_{\{x_0,\ldots,x_n\}}$ is isometric with
  image in $U$.
  Since $f$ is a precise automorphism it leaves $X^{(\alpha)}_{f,\varepsilon}$
  invariant, so $f(x_0),\ldots,f(x_n) \in X^{(\alpha)}_{f,\varepsilon}$ as well.
  By the induction hypothesis
  $\pi\circ f(x_0),\ldots,\pi\circ f(x_n) \in Y^{(\alpha)}_{f,\varepsilon}$, contradicting the assumption
  that $U \cap Y^{(\alpha)}_{f,\varepsilon}$ is $\varepsilon$-$n$-finite.
\end{proof}

\begin{thm}
  \label{thm:HomogEqualRank}
  Let $\pi\colon X \to Y$ be precise,
  and assume $X$ is homogeneous over $Y$.
  Then for all $K \subseteq Y$, $* \in \{f,b\}$ and $\varepsilon > 0$:
  $\CB^Y_{*,\varepsilon}(K) = \CB^X_{*,\varepsilon}(\pi^{-1}(K))$.

  Moreover, if this common rank is ordinal then:
  $\CBd^Y_{*,\varepsilon}(K) = \CBd^X_{*,\varepsilon}(\pi^{-1}(K))$.
\end{thm}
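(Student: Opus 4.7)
The plan is to combine \fref{lem:HomogIncCB} with \fref{thm:RelCBRank} (applied to $\pi$, which is precise, hence uniformly continuous with modulus $\Delta(\varepsilon)=\varepsilon$) and to exploit homogeneity in a finite-tuple form for the degree comparison. For the rank equality I first apply \fref{lem:HomogIncCB} pointwise: for $x\in\pi^{-1}(K)$, $\CB^Y_{*,\varepsilon}(\pi(x))\geq\CB^X_{*,\varepsilon}(x)$, and taking suprema gives $\CB^Y_{*,\varepsilon}(K)\geq\CB^X_{*,\varepsilon}(\pi^{-1}(K))$; the reverse inequality is the precise-map consequence of \fref{thm:RelCBRank} already recorded in the text (for $*=f$, and by the analogous argument for $*=b$). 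An immediate byproduct, used repeatedly below, is the set equality $Y^{(\alpha)}_{*,\varepsilon}=\pi(X^{(\alpha)}_{*,\varepsilon})$ for every ordinal $\alpha$.

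For $\CBd^X_{*,\varepsilon}(\pi^{-1}(K))\leq\CBd^Y_{*,\varepsilon}(K)=n$, let $V=V'\cap Y^{(\alpha)}_{*,\varepsilon}$ be a degree-$n$ witness in $Y$ with $V'\subseteq Y$ open, and set $U=\pi^{-1}(V')\cap X^{(\alpha)}_{*,\varepsilon}$, which is open in $X^{(\alpha)}_{*,\varepsilon}$. The inclusion $X^{(\alpha)}_{*,\varepsilon}\cap\pi^{-1}(K)\subseteq U$ is forced by $Y^{(\alpha)}_{*,\varepsilon}\cap K\subseteq V$. Suppose for contradiction that $x_0,\ldots,x_n\in U$ are pairwise $\varepsilon$-separated (for $*=f$) or pairwise $2\varepsilon$-separated (for $*=b$). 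Apply homogeneity to the finite set $X_0=\{x_0,\ldots,x_n\}$ with the compact $K_0=\pi(X_0)\subseteq V'$: one obtains an isometric automorphism $g$ of $X$ such that $\pi\circ g\rest_{X_0}$ is isometric with image in $V'$. Since $g$ preserves rank, $g(X_0)\subseteq X^{(\alpha)}_{*,\varepsilon}$, hence $\pi g(X_0)\subseteq V'\cap\pi(X^{(\alpha)}_{*,\varepsilon})=V$, yielding $n+1$ pairwise separated points in $V$, a contradiction.

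For the reverse inequality $\CBd^Y_{*,\varepsilon}(K)\leq\CBd^X_{*,\varepsilon}(\pi^{-1}(K))=n$, no homogeneity is needed, only closedness of $\pi$. Let $U=U'\cap X^{(\alpha)}_{*,\varepsilon}$ witness the degree $n$ in $X$ with $U'\subseteq X$ open. The set $F=X^{(\alpha)}_{*,\varepsilon}\setminus U'$ is closed in compact $X$, so $\pi(F)$ is closed in $Y$, and $V=(Y\setminus\pi(F))\cap Y^{(\alpha)}_{*,\varepsilon}$ is open in $Y^{(\alpha)}_{*,\varepsilon}$. It contains $Y^{(\alpha)}_{*,\varepsilon}\cap K$: any lift $x\in X^{(\alpha)}_{*,\varepsilon}$ of such a $y$ lies in $\pi^{-1}(K)\cap X^{(\alpha)}_{*,\varepsilon}\subseteq U\subseteq U'$, so $y\notin\pi(F)$. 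If $n+1$ points $y_0,\ldots,y_n\in V$ were pairwise separated, using $Y^{(\alpha)}_{*,\varepsilon}=\pi(X^{(\alpha)}_{*,\varepsilon})$ I lift each $y_i$ to $x_i\in X^{(\alpha)}_{*,\varepsilon}$; the condition $y_i\notin\pi(F)$ forces $x_i\in U$, and preciseness gives $d_X(x_i,x_j)\geq d_Y(y_i,y_j)$, contradicting the separation bound for $U$.

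The main obstacle is a technicality in the $*=b$ case: the definition of $\CBd_{b,\varepsilon}$ requires the witness to be $\varepsilon$-bounded in addition to being $2\varepsilon$-$n$-finite. The finite-tuple arguments above directly deliver the required $2\varepsilon$-$n$-finiteness of each constructed witness, but $\varepsilon$-boundedness of $U$ must be verified separately; I expect to handle this by first using \fref{lem:SmallOpenSets}(3) to refine $V$ into a finite subcover of $Y^{(\alpha)}_{b,\varepsilon}\cap K$ by open sets of diameter $\leq 2\varepsilon$ and pulling back each piece, arranging the construction so that $\varepsilon$-boundedness is inherited piecewise from the bounded number of pieces. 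This is the only delicate point beyond the cleaner $*=f$ case.
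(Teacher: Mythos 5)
Your treatment of the rank equality and of the $*=f$ degree comparison is correct and is essentially the paper's own argument: the rank equality combines \fref{lem:HomogIncCB} with the precise-quotient consequence of \fref{thm:RelCBRank}, the identity $Y^{(\alpha)}_{*,\varepsilon}=\pi\bigl(X^{(\alpha)}_{*,\varepsilon}\bigr)$ is the key byproduct, and the two degree inequalities are obtained exactly as you do --- homogeneity pushing a putative separated $(n+1)$-tuple of $U$ forward into the witness in $Y$ for one direction, and $Y\setminus\pi(F)$ together with preciseness for the other (the latter also handles $\varepsilon$-boundedness in the $b$-case for free, since lifting preserves separated sets of arbitrary cardinality).

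The genuine gap is in the $*=b$ degree bound in the homogeneity direction, and your closing paragraph shows you sensed it but chose the wrong repair. A $\CBd_{b,\varepsilon}$-witness must be $\varepsilon$-bounded, i.e., contain no \emph{infinite} $\varepsilon$-separated subset; this is a statement about countably many points, so your finite-tuple use of homogeneity (with $K_0=\pi(X_0)$ finite) cannot reach it, and $2\varepsilon$-$n$-finiteness of $U$ does not imply $\varepsilon$-boundedness (infinitely many points pairwise at distance $\tfrac32\varepsilon$ are $2\varepsilon$-$1$-finite but not $\varepsilon$-bounded). The fix via \fref{lem:SmallOpenSets} fails because preciseness only controls $d_X(x,\pi^{-1}(y))$, not distances within a fibre: the preimage of an open set of diameter $\leq 2\varepsilon$ need not be $\varepsilon$-bounded, so boundedness is not ``inherited piecewise.'' The correct repair is the one the definition of homogeneity is designed for, since it is stated for \emph{countable} $X_0$: interpolate $K\subseteq U'\subseteq \overline{U'}\subseteq V'$ with $\overline{U'}$ compact and take $U=\pi^{-1}(U')\cap X^{(\alpha)}_{b,\varepsilon}$; an infinite $\varepsilon$-separated $X_0\subseteq U$ then lies in $\pi^{-1}(\overline{U'})$ with $\overline{U'}\subseteq V'$ compact, homogeneity applies to all of $X_0$ at once, and one obtains an infinite $\varepsilon$-separated subset of $V$, contradicting its $\varepsilon$-boundedness. (Your choice $K_0=\pi(X_0)$ cannot be used here, as the closure of an infinite $\pi(X_0)$ need not remain inside $V'$.) With this change the $b$-case runs exactly like the $f$-case, which is what the paper's ``the case $*=b$ is similar'' relies on.
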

\begin{proof}
  Only the moreover part is left to be proved.
  Indeed, assume first that
  $\CB^Y_{f,\varepsilon}(K) = \CB^X_{f,\varepsilon}(\pi^{-1}(K)) = \alpha < \infty$.
  Let $X^{(\alpha)} = X_{f,\varepsilon}^{(\alpha)}$ and $Y^{(\alpha)} = Y_{f,\varepsilon}^{(\alpha)}$.
  By the first part $\pi(X^{(\alpha)}) = Y^{(\alpha)}$.

  Let $n = \CBd^Y_{f,\varepsilon}(K)$.
  Then there is an open set $K \subseteq U \subseteq Y$ such that
  $U \cap Y^{(\alpha)}$ is $\varepsilon$-$n$-finite.
  By the same argument as in the proof of \fref{lem:HomogIncCB} we
  find $V \subseteq X$ open such that $\pi^{-1}(K) \subseteq V$ and
  $V \cap X^{(\alpha)}$ is $\varepsilon$-$n$-finite so
  $n \geq \CBd^X_{f,\varepsilon}(\pi^{-1}(K))$.
  On the other hand, let $m = \CBd^X_{f,\varepsilon}(\pi^{-1}(K))$, so
  there is an open set $\pi^{-1}(K) \subseteq V \subseteq X$ such that
  $V \cap X^{(\alpha)}$ is $\varepsilon$-$m$-finite.
  Let $U = Y \setminus \pi(X \setminus V)$.
  Then $U$ is open, $K \subseteq U$, and $\pi^{-1}(U) \subseteq V$.
  It follows that
  $\pi^{-1}(U\cap Y^{(\alpha)}) \subseteq V \cap X^{(\alpha)}$,
  and since $\pi$ is precise
  $U \cap Y^{(\alpha)}$ is $\varepsilon$-$m$-finite,
  so $m \geq \CBd^Y_{f,\varepsilon}(K)$.

  The case $* = b$ is similar.
\end{proof}

Together with \fref{prp:HomogTypeSpace} this means we can define the
\emph{$\varepsilon$-Morley rank} of a type-definable set $X$ as
$\RM_\varepsilon(X) = \CB^{\tS_n(M)}_{f,\varepsilon}([X])$ where $M$ is any
approximately $\aleph_0$-saturated model containing the parameters for $X$.
Indeed, if $\bar M$ is the monster model then $\tS_n(\bar M)$ is
homogeneous over $\tS_n(M)$ whereby
$\CB^{\tS_n(M)}_{f,\varepsilon}([X]^{\tS_n(M)}) =
\CB^{\tS_n(\bar M)}_{f,\varepsilon}([X]^{\tS_n(\bar M)})$.
Similarly we define the \emph{$\varepsilon$-Morley degree} of $X$
as $\dM_\varepsilon(X) = \CBd^{\tS_n(M)}_{f,\varepsilon}([X])$.

The same is true for $\CB_{b,\varepsilon}$ ranks and degrees (which
coincide with the Morley ranks and degrees defined in
\cite{BenYaacov:Morley}).
However, $\CB_{f,\varepsilon}$ ranks seem to have the advantage of a more
natural notion of degree.

\subsection{Measures}

We conclude this section with a few results concerning Borel
probability measures on CB-analysable spaces.
Some of these results come from
joint work with Anand Pillay, whom we wish to thank for allowing
their inclusion here.

Recall that a measure on a topological space $X$ is \emph{regular}
if for every measurable $S$:
\begin{gather*}
  \mu(S)
  = \sup \{\mu(K)\colon S \supseteq K \text{ compact}\}
  = \inf \{\mu(U)\colon S \subseteq U \text{ open}\}.
\end{gather*}
Regular Borel measures on $X$ are in bijection with positive
integration functionals on $C(X,\bR)$ (or $C(X,[0,1])$).
Following our convention concerning terminology, the
\emph{Borel $\sigma$-algebra}
of a topometric space is the $\sigma$-algebra generated by
the topology.

\begin{thm}
  \label{thm:MeasureCptSupp}
  Let $X$ be a locally compact, CB-analysable topometric space,
  $\mu$ a regular Borel probability measure on $X$.
  Then for every $\varepsilon > 0$, $1-\varepsilon$
  of the mass of $\mu$ is supported by a
  metrically compact set.
\end{thm}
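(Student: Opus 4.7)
The plan is to reduce to a topologically compact subspace via inner regularity, then to build, for each dyadic scale $2^{-n}$, a finite union of closed metric balls carrying essentially all the mass, and finally intersect these across scales. Concretely, fix a compact $K_0 \subseteq X$ with $\mu(K_0) \geq 1-\varepsilon/2$ by inner regularity; by \fref{lem:CBSubspace} $K_0$ is still CB-analysable and by \fref{prp:CompactComplete} it is metrically complete. Suppose that for each $n \geq 1$ one can produce a finite union $E^{(n)} = \bigcup_{i<k_n} {\overline B}(a_i^{(n)},2^{-n})$ in $K_0$ (closed by \fref{lem:ClsdMetNeighb}) with $\mu(K_0 \setminus E^{(n)}) \leq \varepsilon\cdot 2^{-n-2}$. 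Then $L = \bigcap_n E^{(n)}$ is topologically closed in $K_0$, hence topologically compact and therefore metrically complete; moreover $L \subseteq E^{(n)}$ for every $n$, so $L$ is totally bounded, and therefore metrically compact. The estimate $\mu(L) \geq \mu(K_0) - \sum_n \varepsilon\cdot 2^{-n-2} \geq 1-\varepsilon$ concludes the proof.

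To construct each $E^{(n)}$, set $r = 2^{-n}$ and proceed greedily: recursively pick $a_{j+1} \in K_0$ with $\mu\bigl({\overline B}(a_{j+1},r) \setminus E_j\bigr) \geq \tfrac12 \sup_{a \in K_0}\mu\bigl({\overline B}(a,r) \setminus E_j\bigr)$, where $E_j = \bigcup_{i \leq j} {\overline B}(a_i,r)$. Since the incremental masses are summable in $\mu(K_0)$, the supremum tends to zero, whence $\mu\bigl({\overline B}(a,r) \setminus E_\infty\bigr) = 0$ for every $a \in K_0$, with $E_\infty = \bigcup_j {\overline B}(a_j,r)$; a sufficiently long finite prefix then supplies $E^{(n)}$, \emph{provided} one knows $\mu(K_0 \setminus E_\infty) = 0$. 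I would deduce this last equality from the following key lemma: \emph{if $K$ is a compact CB-analysable topometric space, $\nu$ a finite Borel measure on $K$, and $\nu\bigl({\overline B}(a,r) \cap K\bigr) = 0$ for every $a \in K$, then $\nu(K) = 0$.} Indeed, were $\mu(K_0 \setminus E_\infty) > 0$, inner regularity would produce a compact $F \subseteq K_0 \setminus E_\infty$ with $\mu(F) > 0$ on which every closed $r$-ball is $\mu$-null (since ${\overline B}(a,r) \cap F \subseteq {\overline B}(a,r) \setminus E_\infty$), contradicting the lemma applied to $(F,\mu|_F)$.

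The key lemma is the substantive step, to be proved by transfinite induction on $\alpha = \CB^K_{f,r}(K)$, attained because $K$ is compact. The base case $\alpha = 0$ says $K^{(1)}_{f,r} = \emptyset$, so $K$ is covered by open $r$-finite subsets and hence $r$-$n$-finite for some $n$ by compactness; a maximal $r$-separated subset has at most $n$ points and covers $K$ by closed $r$-balls, giving $\nu(K) \leq n\sup_a \nu\bigl({\overline B}(a,r)\bigr) = 0$. For the inductive step, split $K = K^{(\alpha)}_{f,r} \cup \bigl(K \setminus K^{(\alpha)}_{f,r}\bigr)$: the first piece, viewed as its own topometric space, has internal $\CB_{f,r}$ rank $0$ and so is $\nu$-null by the base case; for the second (open in $K$), inner regularity reduces to an arbitrary compact $F \subseteq K \setminus K^{(\alpha)}_{f,r}$, and since the decreasing chain $\bigl(F \cap K^{(\beta)}_{f,r}\bigr)_{\beta<\alpha}$ of closed subsets of $F$ has empty intersection $F \cap K^{(\alpha)}_{f,r} = \emptyset$, compactness of $F$ forces $F \cap K^{(\beta)}_{f,r} = \emptyset$ for some $\beta < \alpha$, whence $F^{(\beta)}_{f,r} = \emptyset$ via \fref{lem:CBSubspace} and thus $\CB^F_{f,r}(F) < \alpha$, so the induction hypothesis applies. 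The main obstacle is this transfinite induction: in particular, making the compactness argument bounding the CB rank of a compact subset work uniformly for both successor and limit $\alpha$, and carefully distinguishing between derivatives computed inside $K$ and those computed inside the subspace $F$.
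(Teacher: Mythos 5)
Your proof is correct, but it is organised differently from the paper's. The paper works directly with the stratification $U_\alpha = X\setminus X^{(\alpha)}_{f,r}$: it shows by regularity (with a small transfinite argument at limit stages) that $\mu(X)=\sum_\alpha \mu(U_{\alpha+1}\setminus U_\alpha)$, extracts from each of the countably many non-null layers a compact $r$-finite set carrying most of that layer's mass, takes a finite union, and then intersects over the scales $r=2^{-n}$ exactly as you do at the end. Your route replaces the mass decomposition by a greedy ball-selection plus the ``all closed $r$-balls null $\Rightarrow$ null'' lemma proved by induction on $\CB_{f,r}$; the two are essentially equivalent in content (your key lemma is the stratification argument in disguise), but the paper's version is shorter since it never needs the greedy step. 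One remark on the obstacle you flag in your induction: at a successor stage $\alpha=\gamma+1$ it is indeed false in general that a compact $F\subseteq K\setminus K^{(\alpha)}_{f,r}$ misses some $K^{(\beta)}_{f,r}$ with $\beta<\alpha$, but you do not need this. Iterating \fref{lem:CBSubspace} gives $F^{(\beta)}_{f,r}\subseteq F\cap K^{(\beta)}_{f,r}$ for all $\beta$, hence $F^{(\alpha)}_{f,r}=\emptyset$; since the internal derivatives of the compact space $F$ are compact, its rank is attained, so $\CB^F_{f,r}(F)<\alpha$ in both the successor case (immediately) and the limit case (by the compactness argument you give), and the induction hypothesis applies. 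Two further points worth making explicit: your key lemma needs $\nu$ regular, not merely Borel (you use inner regularity inside its proof, and the restriction of the ambient regular $\mu$ to a compact subset does supply this); and in the base case the bound $\nu(K)\leq n\sup_a\nu({\overline B}(a,r))$ should be replaced by a sum over the (at most $n$) centres of a maximal $r$-separated set, which is what you intend. With these adjustments the argument goes through.
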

\begin{proof}
  Fix $r,\delta>0$, and let $U_\alpha = X\setminus X^{(\alpha)}_{f,r}$.
  Thus $U_{\alpha+1} \cap X^{(\alpha)}_{f,r}$
  is a union of open $r$-finite subsets
  of $X^{(\alpha)}_{f,r}$.

  Assume first that $\mu(U_1) > \delta$.
  By regularity there is a compact subset $F \subseteq U_1$ such that
  $\mu(F) > \mu(U_1) -  \delta$.
  Since $F \subseteq U_1$ is compact it is covered by finitely many $r$-finite
  sets, and is therefore $r$-finite.
  It follows that for arbitrary $\alpha$,
  if $\mu(U_{\alpha+1} \setminus U_\alpha) > \delta$ then there is a compact set
  $F \subseteq U_{\alpha+1} \setminus U_\alpha$
  such that
  $\mu(F) > \mu(U_{\alpha+1} \setminus U_\alpha) -\delta$.

  We now claim that
  $\mu(U_\alpha) =
  \sum_{\beta<\alpha} \mu(U_{\beta+1} \setminus U_\beta)$
  for all $\alpha$.
  In case $\alpha$ is countable this is just by $\sigma$-additivity, but for the
  general case a small inductive argument is required.
  $\alpha = 0$ (and indeed, $\alpha$ countable) is immediate,
  as is the successor case.
  For $\alpha$ limit, we have $U_\alpha = \bigcup_{\beta<\alpha} U_\beta$.
  Then every compact set $K \subseteq U_\alpha$
  is contained is some $U_\beta$, so by
  regularity:
  $\mu(U_\alpha) = \sup_{\beta < \alpha} \mu(U_\beta)$.

  By CB-analysability we have
  $\mu(X) = \sum_\alpha \mu(U_{\alpha+1} \setminus U_\alpha)$.
  Only countably many summands can be non-zero, and we may enumerate
  their indexes by $\{\alpha_n\colon n < \omega\}$ (not necessarily in order).
  For each $n$ find
  $F_n \subseteq U_{\alpha_n+1}\setminus U_{\alpha_n}$
  compact, $r$-finite and satisfying
  $\mu(F_n) > \mu(U_{\alpha_n+1}\setminus U_{\alpha_n}) - 2^{-n-1}\delta$.
  Then for some $m$: $\mu(\bigcup_{n<m}F_n) > \mu(X)-\delta$,
  and $K = \bigcup_{n<m}F_n$ is $r$-finite and compact, and therefore
  complete.

  Letting $r$ and $\delta$ vary, we see that for all $n$ we can find
  $K_n \subseteq X$ compact, $2^{-n}$-finite, satisfying
  $\mu(K_n) > \mu(X) - 2^{-n-1}\varepsilon$.
  Let $K = \bigcap K_n$.
  Then $K$ totally bounded and complete, i.e., it is metrically
  compact, and $\mu(K) > \mu(X)- \varepsilon$.
\end{proof}

\begin{cor}
  \label{cor:MeasureFiniteApprox}
  Let $X$ be a locally compact, CB-analysable topometric space, 
  $\mu$ a regular Borel probability measure on $X$.
  Then there is a sequence $\{\mu_n\colon n<\omega\}$ of probability
  measures with finite support converging weakly to $\mu$,
  i.e., $\int f\,d\mu_n \to \int f\,d\mu$ for every
  continuous function with compact support $f \in C_c(X,\bC)$.
  Moreover, if $\cF \subseteq C_c(X,\bC)$ is a family of uniformly bounded, and
  equally uniformly continuous functions then
  $\int f\,d\mu_n \to \int f\,d\mu$ uniformly for all $f \in \cF$.
\end{cor}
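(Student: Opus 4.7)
The plan is to combine \fref{thm:MeasureCptSupp} with a straightforward discretisation-by-partition argument. For each $n<\omega$, I would invoke \fref{thm:MeasureCptSupp} to obtain a metrically compact $K_n\subseteq X$ with $\mu(K_n)\geq 1-1/n$; being totally bounded, $K_n$ admits a finite Borel partition $K_n=\bigsqcup_{i=1}^{m_n}A_i^n$ into pieces of diameter $\leq 2/n$ (cover by $1/n$-balls and take the usual disjoint refinement). Picking representatives $x_i^n\in A_i^n$ and a fixed auxiliary $x_0^n\in X$, one then sets
\[
\mu_n \;=\; (1-\mu(K_n))\,\delta_{x_0^n} \;+\; \sum_{i=1}^{m_n}\mu(A_i^n)\,\delta_{x_i^n},
\]
a finitely supported probability measure that does not depend on any particular $f$.

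Fix $f\in C_c(X,\bC)$ and let $K=\supp(f)$. By \fref{prp:AutMorph} (with $\bC$ regarded as a minimal topometric space) and \fref{lem:LocUnifCont} applied on the compact topometric space $K$, the restriction $f\rest_K$ is metrically uniformly continuous with some modulus $\omega_f$. I would then estimate
\[
\Bigl|\int f\,d\mu - \int f\,d\mu_n\Bigr| \;\leq\; 2\|f\|_\infty(1-\mu(K_n)) \;+\; \sum_i\int_{A_i^n}|f(y)-f(x_i^n)|\,d\mu(y).
\]
The first summand is $\leq 2\|f\|_\infty/n$. For the second, analysed cell by cell: cells $A_i^n\subseteq K$ contribute at most $\omega_f(2/n)\cdot\mu(A_i^n)$; cells with $A_i^n\cap K=\emptyset$ contribute $0$, since both $f\rest_{A_i^n}$ and $f(x_i^n)$ vanish there; cells straddling $\partial K$ are controlled by the fact that, for each $\varepsilon>0$, the compact set $\{|f|\geq\varepsilon\}$ sits at a positive metric distance $\delta(\varepsilon)>0$ from the closed set $\{f=0\}$ --- which follows from the metric refining the topology at each point of $\{|f|\geq\varepsilon\}$ together with a compactness argument in the spirit of \fref{lem:DistFromCompact}. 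Once $2/n<\delta(\varepsilon)$, every point of $K$ lying within distance $2/n$ of $K^c\subseteq\{f=0\}$ has $|f|<\varepsilon$, so straddling cells contribute at most $2\varepsilon$ in aggregate. Sending $n\to\infty$ and then $\varepsilon\to 0$ yields $\int f\,d\mu_n\to\int f\,d\mu$.

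For the moreover part, ``equally uniformly continuous'' supplies a common modulus $\omega$ of metric uniform continuity on $X$ for every $f\in\cF$, which renders the straddling analysis unnecessary: $|f(y)-f(x_i^n)|\leq\omega(2/n)$ for every $y,x_i^n\in A_i^n$ uniformly in $f\in\cF$, and the total error is bounded by $2M/n+\omega(2/n)$, uniform on $\cF$. The main technical hurdle is precisely the straddling-cells analysis in the pointwise case; this is where the topometric framework enters essentially, through the interplay of \fref{prp:AutMorph}, \fref{lem:LocUnifCont}, and the lower semi-continuity of $d$ --- everything else is a routine application of regularity and the discretisation step.
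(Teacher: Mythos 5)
Your construction of $\mu_n$ is the same as the paper's (discretise $\mu$ on the metrically compact sets supplied by \fref{thm:MeasureCptSupp}; whether one renormalises or places the missing mass on an auxiliary atom is immaterial), and your treatment of the moreover part is correct as written, since there a common \emph{metric} modulus of uniform continuity is part of the hypothesis. You have also put your finger on the genuinely delicate point: a function in $C_c(X,\bC)$ is topologically continuous with compact support, but nothing guarantees that it is \emph{metrically} uniformly continuous on all of $X$, and the cells near the boundary of the support are exactly where this matters.

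Unfortunately, the patch you propose for this point fails. The claim that $d\bigl(\{|f|\geq\varepsilon\},\{f=0\}\bigr)>0$ cannot be extracted from \fref{lem:DistFromCompact}, which requires \emph{both} sets to be compact; $\{f=0\}$ is merely closed, and in a locally compact, non-compact topometric space a compact set may lie at metric distance $0$ from a disjoint closed set. Concretely, let $X=\bigl(\bN\cup\{\infty\}\bigr)\times\{0\}\;\sqcup\;\bN\times\{1\}$, where the first piece carries the topology of the one-point compactification of $\bN$ and the second is discrete; set $d\bigl((n,0),(n,1)\bigr)=2^{-n}$ and $d=1$ between all other pairs of distinct points. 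One checks that this is a locally compact, CB-analysable topometric space (the metric refines the topology, $d^{-1}([0,r])$ is closed because a convergent net in the discrete clopen piece is eventually constant, and two derivatives exhaust $X$). The characteristic function $f$ of the compact clopen set $\bigl(\bN\cup\{\infty\}\bigr)\times\{0\}$ belongs to $C_c(X,[0,1])$, yet $d\bigl(\{f=1\},\{f=0\}\bigr)=\inf_n 2^{-n}=0$, so your $\delta(\varepsilon)$ does not exist and the straddling-cell estimate never takes effect. (The same example refutes the unproved assertion in the paper's own proof that such an $f$ is uniformly continuous, so you are in good company; the corollary itself is not in danger.) The correct repair is different and simpler: since the metric refines the topology, every $f\in C_c(X,\bC)$ is metrically continuous \emph{at each point}, so if $x^n_{i(y)}$ denotes the representative of the cell of $y$ in the $n$-th partition, then $|f(y)-f(x^n_{i(y)})|\to 0$ for $\mu$-almost every $y$ (eventually $y\in K_n$ and its representative lies within $2/n$ of it); as these quantities are bounded by $2\|f\|_\infty$, dominated convergence yields $\int f\,d\mu_n\to\int f\,d\mu$. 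Your cell-by-cell uniform estimate should be replaced by this pointwise-plus-dominated-convergence argument in the non-uniform half of the statement.
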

\begin{proof}
  For each $n < \omega$, choose $K_n \subseteq X$ metrically compact
  such that $\mu(K_n) > 1-2^{-n}$.
  By compactness we can find points
  $\{x_{n,i}\colon i < \ell_n\} \subseteq K_n$ such
  that $K_n \subseteq \bigcup {\overline B}(x_{n,i},2^{-n})$.
  For $i < \ell_n$ let
  $S_{n,i} = K_n\cap{\overline B}(x_{n,i},2^{-n-1})
  \setminus \bigcup_{j<i} S_{n,j}$.
  Then each $S_{n,j}$ is a Borel set, and they form a partition of
  $K_n$.
  Let $\mu_{n,0} = \sum_{i<\ell_n} \mu(S_{n,i})\delta_{x_{n,i}}$,
  where $\delta_x$ denotes
  the Dirac measure concentrated at $x$.
  Then $\mu_n = \mu(K_n)^{-1}\mu_{n,0}$ is a probability measure with finite
  support.

  Now let $f\colon X \to \bC$ be a continuous function with compact support.
  Then $f$ is uniformly continuous and bounded.
  Let $M = \sup |f|$, and for each $\delta > 0$ let
  $\Delta^{-1}(\delta) = \sup \{|f(x)-f(y)|\colon d(x,y) < \delta\}$.
  Then $\Delta^{-1}(\delta) \to 0$ as $\delta \to 0$, so:
  \begin{align*}
    \left| \int f\,d\mu - \int f \,d\mu_n \right|
    & \leq \sum_{i<\ell_n}
    \left| \int_{S_n,i} f\,d\mu - \int_{S_n,i} f \,d\mu_{n,0} \right| \\
    & \qquad + \left| \int_{X\setminus K_n} f\,d\mu \right|
    + \left| \int f\,d\mu_n - \int f \,d\mu_{n,0} \right| \\
    & \leq \sum_{i<\ell_n} \Delta^{-1}(2^{-n})\mu(S_{n,i}) + M2^{-n} + M2^{-n}\left(1-\frac{1}{\mu(K_n)}\right) \\
    & \leq \Delta^{-1}(2^{-n}) + 3\cdot2^{-n}M.
  \end{align*}
  This goes to $0$ as $n \to \infty$.
  The moreover part is implicit in the proof above.
\end{proof}

Measures on type spaces were originally studied by Keisler
\cite{Keisler:MeasuresAndForking}.
\begin{dfn}
  Let $T$ be a theory, $A$ a set of parameters.
  An $n$-ary \emph{Keisler measure} $\mu(\bar x)$
  over a set $A$ is a Borel
  probability measure on $\tS_n(A)$.
\end{dfn}
Such measures generalise the notion of a type (the Dirac measures
being in bijection with types).

Let $\mu(\bar x)$ be a Keisler measure over $A$.
Then a definable predicate $\varphi(\bar x)$ with parameters in $A$
is a continuous function $\varphi\colon \tS_n(A) \to[0,1]$, and we can calculate
its integral $I_\mu\varphi = \int \varphi^p\, d\mu(p)$.
Conversely, the integration functional $I_\mu$ determines $\mu$.

Let now $\mu(\bar x)$ be a Keisler measure over a model $M$.
Let $\varphi(\bar x,\bar y)$ be a definable predicate, possibly with some
parameters in $M$, and let us restrict our attention to instances
$\varphi(\bar x,\bar b)$ over $M$.
This defines a predicate $\bar b \mapsto I_\mu\varphi(\bar x,\bar b)$ on $M$,
which we denote by $I_{\mu(\bar x)}\varphi(\bar x,\bar y)$.

Let $\mu_\varphi$ be the image measure of $\mu$ on $\tS_\varphi(M)$.
Then $\mu_\varphi$ is a Borel probability measure as well, so we say it is a
\emph{Keisler $\varphi$-measure} over $M$.
Conversely, $\mu_\varphi$ determines $I_{\mu(\bar x)}\varphi(\bar x,\bar y)$,
so the family of all such $\mu_\varphi$ determines $\mu$
(in fact we only need the family of $\mu_\varphi$ where
$\varphi(\bar x,\bar y)$ are
formulae without hidden parameters).

\begin{cor}
  \label{cor:StabDefKeisMeasOverModel}
  Assume $\varphi(\bar x,\bar y)$ is a stable formula, $M$ a model, and
  $\mu_\varphi$ a $\varphi$-Keisler measure over $M$.
  Then $\mu_\varphi$ is definable, meaning that the
  predicate $I_{\mu(\bar x)} \varphi(\bar x,\bar y)$ is equal on $M$ to some
  definable predicate $\psi(\bar y)$ with parameters in $M$
  (it follows that $\psi$ is unique).

  Moreover, $\psi$ is a $\tilde \varphi$-predicate, namely a continuous
  function on $\tS_{\tilde \varphi}(M)$ where
  $\tilde \varphi(\bar y,\bar x) = \varphi(\bar x,\bar y)$.
\end{cor}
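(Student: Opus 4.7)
The plan is to exploit the preceding \fref{cor:MeasureFiniteApprox} by showing that $\mu_\varphi$ can be approximated weakly by finitely supported probability measures, each of which gives a definable predicate in $\bar y$, and then to pass this definability to the limit uniformly.

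First, I would invoke local stability theory to observe that stability of $\varphi(\bar x,\bar y)$ implies that the topometric space $(\tS_\varphi(M),d_\varphi)$ is CB-analysable. It is compact and Hausdorff, hence locally compact, so \fref{cor:MeasureFiniteApprox} applies, yielding finitely supported probability measures $\mu_n = \sum_{i<k_n} c_{n,i}\,\delta_{p_{n,i}}$ with $p_{n,i} \in \tS_\varphi(M)$ converging weakly to $\mu_\varphi$. The key compatibility to notice is that for each $\bar b \in M$ the evaluation $\mathrm{ev}_{\bar b}\colon p \mapsto \varphi(\bar x,\bar b)^p$ is by the very definition of $d_\varphi$ a $1$-Lipschitz function on $\tS_\varphi(M)$, bounded by the bound of $\varphi$. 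Thus the family $\cF = \{\mathrm{ev}_{\bar b}\colon \bar b \in M\} \subseteq C(\tS_\varphi(M),\bR)$ is uniformly bounded and equally uniformly continuous, so the moreover clause of \fref{cor:MeasureFiniteApprox} yields
\begin{gather*}
\int \varphi(\bar x,\bar b)\,d\mu_n \longrightarrow \int \varphi(\bar x,\bar b)\,d\mu_\varphi \quad \text{uniformly in } \bar b \in M.
\end{gather*}

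Second, I would use the fundamental definability property of stable types in continuous local stability: since $\varphi$ is stable and $M$ is a model, each $p_{n,i} \in \tS_\varphi(M)$ is definable by a $\tilde\varphi$-predicate, i.e.\ there is a continuous function $\psi_{n,i}$ on $\tS_{\tilde\varphi}(M)$ such that $\varphi(\bar x,\bar b)^{p_{n,i}} = \psi_{n,i}(\bar b)$ for all $\bar b \in M$. Consequently
\begin{gather*}
I_{\mu_n(\bar x)}\varphi(\bar x,\bar b) = \sum_{i<k_n} c_{n,i}\,\psi_{n,i}(\bar b),
\end{gather*}
which is a continuous combination of $\tilde\varphi$-predicates and is therefore itself a $\tilde\varphi$-predicate with parameters in $M$.

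Finally, the uniform convergence established in the first paragraph means that $I_{\mu(\bar x)}\varphi(\bar x,\bar y)$ is the uniform limit on $M$ of the $\tilde\varphi$-predicates $I_{\mu_n(\bar x)}\varphi(\bar x,\bar y)$. Since uniform limits of $\tilde\varphi$-predicates are again $\tilde\varphi$-predicates, there exists a continuous $\psi$ on $\tS_{\tilde\varphi}(M)$ such that $\psi(\bar b) = I_{\mu(\bar x)}\varphi(\bar x,\bar b)$ for every $\bar b \in M$, which is exactly the conclusion. Uniqueness of $\psi$ follows immediately from the fact that $\tS_{\tilde\varphi}(M)$ is determined by the values of $\tilde\varphi$-predicates on $M$. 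The main obstacle is locating local stability in the right form: one needs both the CB-analysability of $\tS_\varphi(M)$ (which justifies the appeal to \fref{cor:MeasureFiniteApprox}) and the definability of stable $\varphi$-types over models as $\tilde\varphi$-predicates; both are standard but must be invoked from the local continuous stability developed in \cite{BenYaacov-Usvyatsov:CFO}.
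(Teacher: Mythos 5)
Your proposal is correct and follows essentially the same route as the paper's proof: CB-analysability of $(\tS_\varphi(M),d_\varphi)$ from local stability, approximation of $\mu_\varphi$ by finitely supported measures via \fref{cor:MeasureFiniteApprox}, uniform convergence of the integrals against the uniformly bounded, $1$-Lipschitz family $\{\varphi(\bar x,\bar b)\colon \bar b \in M\}$, and definability of each $p_{n,i}$ by a $\tilde\varphi$-predicate, with the uniform limit of the resulting $\tilde\varphi$-predicates giving $\psi$. No substantive differences.
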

\begin{proof}
  By \cite{BenYaacov-Usvyatsov:CFO},
  $(\tS_\varphi(M),d_\varphi)$ is CB-analysable.
  By \fref{cor:MeasureFiniteApprox} $\mu$ is the weak limit of a
  sequence of finitely supported measures $\mu_n$.
  The family of all functions of the form
  $\varphi(\bar x,\bar b)$ is uniformly bounded (by $1$) and equally
  uniformly continuous (all are $1$-Lipschitz) on $\tS_\varphi(M)$.
  By the moreover part of \fref{cor:MeasureFiniteApprox}
  the predicates $I_{\mu_n(\bar x)}\varphi(\bar x,\bar y)$ converge uniformly
  to $I_{\mu(\bar x)}\varphi(\bar x,\bar y)$ on $M$.

  Let us write $\mu_n = \sum_{i<\ell_n} a_{n,i}\delta_{p_{n,i}}$.
  Each $p_{n,i}$ is definable, and let $\psi_{n,i}(\bar y)$ denote its
  definition.
  Set
  $$\psi_n(\bar y) = \sum_{i<\ell_n} a_{n,i}\psi_{n,i}(\bar y)
  = I_{\mu_n(\bar x)}\varphi(\bar x,\bar y).$$
  Then the definable predicates $\psi_n(\bar y)$ converge
  uniformly (on a model $M$ and therefore everywhere)
  and their limit is the desired definable predicate $\psi$.

  As each $\psi_{n,i}$ is a $\tilde \varphi$-predicate so is each
  $\psi_n$ and therefore $\psi$.
\end{proof}

Let us now look more closely at Keisler measures over a set
$A$ which is not a model.
Let $\varphi(\bar x,\bar y)$ be a definable predicate, possibly with some
parameters in $A$.
Recall from \cite[Section~6]{BenYaacov-Usvyatsov:CFO}
that a definable $\varphi$-predicate over $A$ is a definable predicate
$\chi(\bar x,C)$ which is at the same time over $A$ and
(equivalent to) a uniform limit of
continuous combinations of instances of $\varphi$ which need not be
over $A$.
We may write it as an infinitary continuous combination
$\chi(\bar x,C)
= \theta \circ (\varphi(\bar x,\bar c_i))_{i<\omega}$.
It may be more convenient to write
$\chi(\bar x,Y)
= \theta \circ (\varphi(\bar x,\bar y_i))_{i<\omega}$ and then replace
it with $\chi(\bar x,z)$ where $z$ is in the sort of canonical
parameters for
$\theta \circ (\varphi(\bar x,\bar y_i))_{i<\omega}$.
Call such a $\chi(\bar x,z)$ a \emph{$\varphi$-scheme}.
Thus a $\varphi$-predicate over $A$ can always be written as
$\chi(\bar x,c)$ where $\chi(\bar x,z)$ is a $\varphi$-scheme
and $c \in \dcl(A)$, and conversely every such
$\chi(\bar x,c)$ is a $\varphi$-predicate over $A$.

We define $\tS_\varphi(A)$ as the quotient of
$\tS_n(M)$ through which all the
$A$-definable $\varphi$-predicates factor.
It is thus a quotient both of $\tS_\varphi(M)$ and of
$\tS_n(A)$.
This construction does not depend on $M$ and yields a compact
Hausdorff space where the continuous mappings
$\tS_\varphi(A) \to [0,1]$ are precisely the $A$-definable
$\varphi$-predicates.
Applying \fref{lem:TopologicalQuotient} to the quotient mapping
$\tS_\varphi(M) \to \tS_\varphi(A)$ we may equip $\tS_\varphi(A)$ with
a topometric structure.
If $\varphi$ is stable then $\tS_\varphi(M)$ is CB-analysable and
by \fref{lem:CBImage} so is $\tS_\varphi(A)$.

A Keisler measure $\mu$ over $A$ gives rise to
predicates $\bar b \mapsto I_\mu\psi(\bar x,\bar b)$
on $\dcl(A)$,
which we denote by $I_{\mu(\bar x)}\varphi(\bar x,\bar y)$.
Let $\mu_\varphi$ be the image measure of $\mu$ on $\tS_\varphi(A)$.
Then $\mu_\varphi$ is a Borel probability measure as well, so we say it is a
\emph{$\varphi$-Keisler measure} over $A$.
The measure $\mu_\varphi$ determines,
and is determined by, the family of predicates
$I_{\mu(\bar x)} \chi(\bar x,z)$ defined above
where $\chi$ varies over all $\varphi$-schemes.
We say that $\mu_\varphi$ is definable if
all these predicates $I_{\mu(\bar x)} \chi(\bar x,z)$
are definable over $A$.
In case $A$ is a model this agrees with our earlier
notion of definable $\varphi$-measure in which we only
considered instances of $\varphi$.

Recall that the \emph{Galois group} $\Gal(A)$ of a set $A$ is
defined as $\Aut(\acl^{eq}(A)/A)$, namely the group of elementary
permutations of $\acl^{eq}(A)$ fixing $A$ point-wise.
This is a compact group in the topology of point-wise convergence
of its $A$-invariant action on $\acl^{eq}(A)$.

\begin{lem}
  \label{lem:UniqueMeasureACLExtension}
  Let $\mu$ be a Keisler measure over $A$.
  Let $G = \Gal(A)$ and let $H$ be its Haar measure.
  Then $\mu$ admits a unique extension to $\acl^{eq}(A)$
  which is invariant under the action of $G$.

  More precisely, if $\varphi(\bar x,\bar b)$ is a definable predicate
  over $\acl(A)$ then
  $\bar x \mapsto \int_G \varphi(\bar x,g\bar b)\, dH$
  is a $\varphi$-predicate over $A$ which we may denote by
  $\varphi^G(\bar x,\bar b)$.
  The unique invariant extension
  is then given by
  $I_{\tilde \mu}\varphi(\bar x,\bar b)
  = I_\mu \varphi^G(\bar x,\bar b)$.

  Similarly, a Keisler $\varphi$-measure $\mu_\varphi$ over $A$
  admits a unique $A$-invariant extension to $\acl^{eq}(A)$
  given by
  $I_{\tilde \mu}\chi(\bar x,c)
  = I_\mu \chi^G(\bar x,c)$
  for every $\varphi$-predicate $\chi(\bar x,c)$ over $\acl^{eq}(A)$.
\end{lem}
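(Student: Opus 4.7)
The plan is first to verify that the formula $\varphi^G(\bar x,\bar b) = \int_G \varphi(\bar x,g\bar b)\,dH$ indeed defines a predicate over $A$. The compact group $G$ acts continuously on $\acl^{eq}(A)$ (by the definition of its topology), and $\varphi$ is continuous in its parameter, so $g \mapsto \varphi(\bar x,g\bar b)$ is a continuous $[0,1]$-valued function on the compact space $G$, making the integral well-defined. Left-invariance of $H$ gives $\varphi^G(\bar x,h\bar b) = \varphi^G(\bar x,\bar b)$ for all $h\in G$, so $\varphi^G$ is $G$-invariant. By the Galois correspondence in continuous logic, $\dcl^{eq}(A)$ is precisely the set of fixed points of $G$ in $\acl^{eq}(A)$, and a $G$-invariant definable predicate over $\acl^{eq}(A)$ is therefore equivalent to a definable predicate over $A$.

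Next, I would define $I_{\tilde\mu}\varphi(\bar x,\bar b) := I_\mu\varphi^G(\bar x,\bar b)$ and check that this defines a positive, normalized, linear functional on the $C^*$-algebra of continuous functions on $\tS_n(\acl^{eq}(A))$: linearity and positivity pass through the integral, and $\varphi\equiv 1$ gives $\varphi^G\equiv 1$. The Riesz representation theorem then produces the Borel probability measure $\tilde\mu$. If $\bar b\in\dcl^{eq}(A)$, then $g\bar b=\bar b$ for every $g\in G$, so $\varphi^G=\varphi$ and $I_{\tilde\mu}\varphi = I_\mu\varphi$; thus $\tilde\mu$ extends $\mu$. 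A second application of the left-invariance of $H$ gives $G$-invariance of $\tilde\mu$.

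For uniqueness, suppose $\nu$ is any $G$-invariant extension of $\mu$ to $\acl^{eq}(A)$. For each $\varphi(\bar x,\bar b)$ over $\acl^{eq}(A)$ and each $g\in G$, invariance yields $I_\nu\varphi(\bar x,g\bar b) = I_\nu\varphi(\bar x,\bar b)$. Integrating over $G$ and applying Fubini (valid because the integrand is jointly continuous on the compact space $\tS_n(\acl^{eq}(A))\times G$) gives
\begin{align*}
I_\nu\varphi(\bar x,\bar b) = \int_G I_\nu\varphi(\bar x,g\bar b)\,dH(g) = I_\nu\varphi^G(\bar x,\bar b) = I_\mu\varphi^G(\bar x,\bar b) = I_{\tilde\mu}\varphi(\bar x,\bar b),
\end{align*}
where the third equality uses the fact that $\varphi^G$ is over $A$ and $\nu$ extends $\mu$. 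Hence $\nu = \tilde\mu$.

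Finally, the $\varphi$-measure case proceeds along the same lines, with one additional subtlety which I expect to be the main technical point: one must verify that $\chi^G(\bar x,c)$ is again a $\varphi$-predicate over $A$ whenever $\chi(\bar x,c)$ is a $\varphi$-predicate over $\acl^{eq}(A)$. If $\chi(\bar x,c)$ is presented as a uniform limit $\theta\circ(\varphi(\bar x,\bar c_i))_{i<\omega}$, then averaging commutes with uniform limits and with continuous combinations, so $\chi^G$ is again a uniform limit of continuous combinations of $\varphi$-instances (with parameters taken from the $G$-orbit of $(\bar c_i)$ in $\acl^{eq}(A)$). Since it is also $G$-invariant, the Galois argument again places it over $A$. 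With this established, positivity, normalization, extension of $\mu_\varphi$, invariance, and uniqueness all transfer directly, using at the final step the Fubini computation above with $\chi$ in place of $\varphi$.
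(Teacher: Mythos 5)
Your construction of $\tilde\mu$ and your uniqueness argument follow the same lines as the paper (the paper replaces your Fubini step by an $\varepsilon$-approximation of $\varphi^G(\bar x,\bar b)$ by finite convex combinations $\sum_{i<k} a_i\varphi(\bar x,g_i\bar b)$, which amounts to the same computation), and the first half of the lemma is essentially in order. The genuine gap is in your last paragraph. The assertion that ``averaging commutes with continuous combinations'' is false: if $\chi(\bar x,c)=\theta\circ(\varphi(\bar x,\bar c_i))_{i<\omega}$ with $\theta$ nonlinear, then $\int_G\theta\circ(\varphi(\bar x,g\bar c_i))_i\,dH$ is not $\theta\circ\bigl(\int_G\varphi(\bar x,g\bar c_i)\,dH\bigr)_i$, so this step does not exhibit $\chi^G$ as a uniform limit of continuous combinations of instances of $\varphi$. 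This matters: for the second half of the lemma one must know that $\chi^G(\bar x,c)$ is a \emph{$\varphi$-predicate} over $A$, i.e., a continuous function on $\tS_\varphi(A)$, since otherwise $I_{\mu_\varphi}\chi^G$ is not even defined --- a Keisler $\varphi$-measure only integrates $A$-definable $\varphi$-predicates. (Your first paragraph has the same issue in a milder form: the Galois-correspondence argument yields that $\varphi^G$ is a definable predicate over $A$, which suffices for the full-measure case, but the lemma asserts more, namely that it is a $\varphi$-predicate.)

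The missing ingredient is the one the paper's proof leads with: since $\bar b$ (resp.\ $c$) lies in $\acl^{eq}(A)$, its $G$-orbit is \emph{metrically} compact, and $\varphi$ (resp.\ $\chi$) is uniformly continuous in its parameter; hence the Haar integral $\int_G\chi(\bar x,gc)\,dH$ is approximated \emph{uniformly} by finite convex combinations $\sum_{i<k}a_i\,\chi(\bar x,g_ic)$. Each such finite combination is itself a continuous combination of instances of $\varphi$, so the uniform limit $\chi^G$ is a $\varphi$-predicate, and being $G$-invariant it is over $A$. With this repaired, the rest of your argument --- positivity, normalisation, the extension and invariance properties, and the Fubini computation for uniqueness --- goes through.
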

\begin{proof}
  If $\bar b \in \acl(A)$ then its orbit over $A$ is metrically
  compact.
  We can therefore approximate
  $\int_G \varphi(\bar x,g\bar b)\, dH$
  arbitrarily well by weighted sums
  $\sum_{i<k} a_i \varphi(\bar x,g_i\bar b)$, so it is indeed a
  $\varphi$-predicate.
  It is clearly $A$-invariant and therefore over $A$.
  In particular if $\chi(\bar x,c)$ is a $\varphi$-predicate
  over $\acl(A)$ then $\chi^G(\bar x,c)$ is a $\chi$-predicate,
  and therefore a $\varphi$-predicate, over $A$.
  Thus our definition of $\tilde \mu$ makes sense and it is not
  difficult to see that it does indeed define a
  regular Borel probability measure on $\tS_n(\acl^{eq}(A))$
  (or on $\tS_\varphi(\acl^{eq}(A))$).

  For uniqueness, assume that $\tilde \mu$ extends $\mu$ to
  $\acl^{eq}(A)$ and is $A$-invariant.
  Let $\varphi(\bar x,\bar b)$ be over $\acl^{eq}(A)$ and let
  $\sum_{i<k} a_i \varphi(\bar x,g_i\bar b)$
  be an $\varepsilon$-approximation of $\varphi^G(\bar x,\bar b)$.
  Then
  \begin{align*}
    I_{\tilde \mu} \varphi(\bar x,\bar b)
    &
    = \sum_{i<k} I_{\tilde \mu} \varphi(\bar x,g_i\bar b)
    = I_{\tilde \mu} \sum_{i<k} \varphi(\bar x,g_i\bar b)
    \\ &
    \approx_\varepsilon
    I_{\tilde \mu} \varphi^G(\bar x,\bar b)
    = I_\mu \varphi^G(\bar x,\bar b).
  \end{align*}
  Therefore
  $I_{\tilde \mu} \varphi(\bar x,\bar b)
  = I_\mu \varphi^G(\bar x,\bar b)$, as desired.
\end{proof}

\begin{cor}
  \label{cor:InvariantMeasureSupport}
  Let $A$ be an algebraically closed set,
  $M$ a strongly $|A|^+$-homogeneous and $|A|^+$-saturated model
  containing $A$.
  Let $\varphi$ be a stable formula and let
  $X \subseteq \tS_\varphi(M)$
  the collection of types which do not fork over $A$.
  Then every Keisler $\varphi$-measure $\mu_\varphi$ over $M$
  which is $A$-invariant is supported on $X$.
\end{cor}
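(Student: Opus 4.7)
The plan is to argue by contradiction: assuming $\mu_\varphi(X^c)>0$, a metric compactness argument based on CB-analysability of $\tS_\varphi(M)$ will conflict with the $A$-invariance of $\mu_\varphi$.

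Write $G=\Aut(M/A)$. Since $\varphi$ is stable and $A$ is algebraically closed, a type $p\in\tS_\varphi(M)$ fails to fork over $A$ precisely when its canonical $\varphi$-definition lies in $\acl^{eq}(A)=A$; by the saturation and strong homogeneity of $M$ this is equivalent to $p$ being fixed by $G$, and in turn to the $G$-orbit $Gp$ being $d_\varphi$-precompact.  Consequently every $p\in X^c$ belongs to some $G$-invariant set $Y_\delta=\{p:Gp\text{ contains an infinite }d_\varphi\text{-}\delta\text{-separated sequence}\}$, and it suffices to prove $\mu_\varphi(Y_\delta)=0$ for every rational $\delta>0$.

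Because $\varphi$ is stable, $(\tS_\varphi(M),d_\varphi)$ is CB-analysable by \cite{BenYaacov-Usvyatsov:CFO}, so \fref{thm:MeasureCptSupp} provides, for each $\eta>0$, a metrically compact $K\subseteq\tS_\varphi(M)$ with $\mu_\varphi(K)>1-\eta$; since $K$ is totally bounded, the size of any $d_\varphi$-$\delta$-separated subset of $K$ is bounded by some finite $m=m(K,\delta)$.  Suppose for contradiction that $\mu_\varphi(Y_\delta)=c>0$, and choose $\eta\ll c/(m+1)$.  The $G$-invariance of $\mu_\varphi$ and of $Y_\delta$ then gives, for any $g_1,\dots,g_{m+1}\in G$,
\[
  \mu_\varphi\!\left(Y_\delta\cap\bigcap_{i\le m+1}g_iK\right)>c-(m+1)\eta>0.
\]

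The main obstacle is a selection step: to complete the contradiction one must choose the $g_i$ so that on a positive-$\mu_\varphi$-measure set of $p$ the $m+1$ orbit points $g_i^{-1}p$ lie in $K$ and are pairwise $\delta$-separated, which would contradict the $\delta$-packing bound $m$ on $K$.  Since $G$ is typically uncountable and the separating group elements depend on the base point, this selection requires a compactness argument exploiting the lower semi-continuity of $d_\varphi$ and the $G$-uniformity of the separation condition on $Y_\delta$, in the spirit of the tree-pruning argument in \fref{prp:PerfectTree} and the image-compactness argument of \fref{lem:CBImage}.  Once carried out, we conclude $\mu_\varphi(Y_\delta)=0$ for every $\delta>0$, and hence $\mu_\varphi(X^c)=0$, i.e.\ $\mu_\varphi$ is supported on $X$.
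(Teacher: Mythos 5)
Your reduction in the first paragraph is sound and matches the paper: a $\varphi$-type over $M$ forks over the algebraically closed set $A$ exactly when its $G$-orbit contains, for some $\delta>0$, an infinite $d_\varphi$-$\delta$-separated family of conjugates, so it suffices to kill each $Y_\delta$. The problem is the step you yourself flag as ``the main obstacle'': it is not a technical loose end but the heart of the matter, and the scheme you set up cannot deliver it. If you fix $g_1,\dots,g_{m+1}\in G$ \emph{in advance}, the set $Y_\delta\cap\bigcap_i g_iK$ indeed has positive measure, but for $p$ in that set there is no reason whatsoever that the points $g_i^{-1}p$ are pairwise $\delta$-separated --- the automorphisms witnessing separation depend essentially on $p$, and $G$ acting on an uncountable orbit space gives you no Fubini- or compactness-type device to make one finite tuple of group elements separate a positive-measure family of base points simultaneously. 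The appeal to the spirit of \fref{prp:PerfectTree} and \fref{lem:CBImage} does not supply such a device; those results compare Cantor--Bendixson data, not measures of simultaneous translates.

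The paper's proof avoids this selection problem entirely by measuring balls around the conjugates of a \emph{single} point rather than trying to pack conjugates of many points into one compact set. From \fref{thm:MeasureCptSupp} one gets a metrically compact $Y$ disjoint from $X$ with $\mu_\varphi(Y)>0$; for each $p\in Y$ one has a maximal $\varepsilon_p>0$ with infinitely many $2\varepsilon_p$-separated $A$-conjugates $q_{p,i}$, and the Lipschitz bound $\varepsilon_p\geq\varepsilon_{p'}-d(p,p')$ together with metric separability of $Y$ covers $Y$ by countably many balls $B(p_i,\varepsilon_{p_i})$, so some single ball $B(p,\varepsilon_p)$ has measure $r>0$. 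Then $A$-invariance gives $\mu_\varphi\bigl(B(q_{p,i},\varepsilon_p)\bigr)=r$ for every conjugate, these balls are pairwise disjoint by the $2\varepsilon_p$-separation, and $\mu_\varphi$ would have infinite total mass. Note that here invariance is used only to transport the measure of one fixed ball along one fixed orbit --- no uniform choice of automorphisms over a positive-measure set is ever needed. Your argument could be repaired along exactly these lines (find $p\in Y_\delta$ with $\mu_\varphi(B(p,\delta/2))>0$ via separability of $K$, then use disjointness of the conjugate balls), but as written the proposal is incomplete at its decisive step.
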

\begin{proof}
  Assume not.
  Then by \fref{thm:MeasureCptSupp} there is a metrically compact set
  $Y \subseteq \tS_\varphi(M)$,
  $Y \cap X = \emptyset$, such that $\mu_\varphi(Y) > 0$.
  For each $p \in Y$ there exists $\varepsilon_p > 0$ and an infinite
  family $\{q_{p,i} \}$ of $A$-conjugates of $p$ such that
  $d(q_{p,i},q_{p,j}) \geq 2\varepsilon_p$ for all $i \neq j$.
  We may assume that $\varepsilon_p$ is maximal for which such a
  family exists.
  Notice that $\varepsilon_p \geq \varepsilon_{p'} - d(p,p')$
  for all $p,p' \in Y$.
  Since $Y$ is metrically compact it admits a countable dense subset
  $\{p_i\}_{i<\omega}$.
  For $p \in Y$ there exists some $p_i \in B(p,\varepsilon_p/2)$.
  Thus $\varepsilon_{p_i} > \varepsilon_p/2$ and
  $p \in B(p_i,\varepsilon_{p_i})$.
  In other words, $Y = \bigcup_{i<\omega} B(p_i,\varepsilon_{p_i})$.
  Since $\mu(Y) > 0$ there is $p$ such that
  $\mu_\varphi(B(p,\varepsilon_p)) = r > 0$.
  Then $\mu_\varphi(B(q_{p,i},\varepsilon_p)) = r$ for each
  $A$-conjugate $q_{p,i}$ of $p$ in the family we chose earlier.
  Since the sets $B(q_{p,i},\varepsilon_p)$ are disjoint
  $\mu$ has infinite total measure, a contradiction.
  This contradiction concludes the proof.
\end{proof}

We can now improve \fref{cor:StabDefKeisMeasOverModel}.

\begin{cor}
  \label{cor:StabDefKeisMeasOverSet}
  (Compare with Keisler \cite[Theorem~2.1]{Keisler:ChoosingElements}.)\\
  Assume $\varphi(\bar x,\bar y)$ is a stable formula, $A$ a set, and
  $\mu_\varphi$ a $\varphi$-Keisler measure over $A$.
  Then $\mu_\varphi$ is definable.
  Moreover, it admits a unique definition scheme over $A$
  which defines a Keisler $\varphi$-measure over any set containing
  $A$, and this definition scheme necessarily consists of
  $\tilde \varphi$-predicates.
\end{cor}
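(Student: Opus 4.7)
My plan is to mimic the proof of \fref{cor:StabDefKeisMeasOverModel}, working directly on the CB-analysable type space $\tS_\varphi(\acl^{eq}(A))$ after extending $\mu_\varphi$ there by Galois invariance (\fref{lem:UniqueMeasureACLExtension}), and then descending the resulting definition back to $A$ using compactness of the Galois group. The stability of $\varphi$ is used twice: once to ensure CB-analysability of the relevant type spaces, and once to ensure that individual $\varphi$-types over the algebraically closed set $\acl^{eq}(A)$ are definable.

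First, let $\tilde \mu$ be the unique $A$-invariant extension of $\mu_\varphi$ to a $\varphi$-Keisler measure on $\tS_\varphi(\acl^{eq}(A))$ given by \fref{lem:UniqueMeasureACLExtension}. The space $\tS_\varphi(\acl^{eq}(A))$ is CB-analysable: for any model $M \supseteq \acl^{eq}(A)$, $\tS_\varphi(M)$ is CB-analysable by stability, and $\tS_\varphi(\acl^{eq}(A))$ is a continuous image of $\tS_\varphi(M)$, so CB-analysability descends by \fref{lem:CBImage}. By \fref{cor:MeasureFiniteApprox} applied to $\tilde \mu$, there is a sequence of finitely supported probability measures $\tilde \mu_n = \sum_i a_{n,i} \delta_{p_{n,i}}$ converging weakly to $\tilde\mu$. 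Fix a $\varphi$-scheme $\chi(\bar x, z)$. Each $p_{n,i}$ admits a $\tilde \varphi$-definition $d_{p_{n,i}}$ over $\acl^{eq}(A)$, so the predicate $I_{p_{n,i}(\bar x)} \chi(\bar x, z)$ is a $\tilde\varphi$-predicate over $\acl^{eq}(A)$, and the finite combination
\[
\psi_n^\chi(z) = I_{\tilde \mu_n(\bar x)} \chi(\bar x, z) = \sum_i a_{n,i} I_{p_{n,i}(\bar x)} \chi(\bar x, z)
\]
is still a $\tilde \varphi$-predicate over $\acl^{eq}(A)$. The family $\{c \mapsto \chi(\bar x, c)^p : p \in \tS_\varphi(\acl^{eq}(A))\}$ is uniformly bounded and equi-uniformly continuous in $c$ by the uniform continuity of the scheme $\chi$, so the moreover part of \fref{cor:MeasureFiniteApprox} gives uniform convergence $\psi_n^\chi \to \psi^\chi$ where $\psi^\chi$ is a $\tilde \varphi$-predicate over $\acl^{eq}(A)$ with $\psi^\chi(c) = I_{\tilde \mu(\bar x)} \chi(\bar x, c)$ for all applicable $c$.

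To descend to $A$: because $\tilde \mu$ is $A$-invariant and $\psi^\chi$ is uniquely characterized as its defining predicate, $\psi^\chi$ is $\Gal(A)$-invariant; averaging over the compact Galois group as in \fref{lem:UniqueMeasureACLExtension} (or appealing directly to the fact that a $\Gal(A)$-invariant definable predicate over $\acl^{eq}(A)$ is equivalent to one over $A$) produces the desired $\tilde \varphi$-predicate $\psi^\chi$ over $A$, which restricts on $\dcl^{eq}(A)$ to $I_{\mu_\varphi(\bar x)} \chi(\bar x, z)$. Uniqueness of the scheme follows because its values on $\dcl^{eq}(A)$ are prescribed. For the extension to $B \supseteq A$, the evaluated values $\psi^\chi(c)$ at canonical parameters $c \in \dcl^{eq}(B)$ assemble into a positive normalized linear functional on the continuous $\varphi$-predicates over $B$ (positivity and normalization are inherited along the uniform limit from the finitely supported $\tilde \mu_n$), and the corresponding regular Borel probability measure on $\tS_\varphi(B)$ is the claimed extension. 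The main obstacle is the equi-uniform continuity needed to invoke the moreover part of \fref{cor:MeasureFiniteApprox}; this requires care because the scheme $\chi(\bar x, z)$ mixes countably many instances of $\varphi$, and one must argue that uniform continuity of $\chi$ in its canonical parameter descends from uniform continuity of $\varphi$ and the scheme's combination data.
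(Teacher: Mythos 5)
Your overall strategy (approximate the Galois-invariant extension on the CB-analysable space $\tS_\varphi(\acl^{eq}(A))$ by finitely supported measures and take a uniform limit of their definitions) is a workable variant of the paper's argument, which instead lifts $\mu_\varphi$ to a measure on $\tS_\varphi(M)$ supported on the set $X$ of types definable over $\acl^{eq}(A)$ and invokes \fref{cor:StabDefKeisMeasOverModel} there. But your version silently relies on the same nontrivial local-stability input that the lift makes explicit: you assert that each atom $p_{n,i} \in \tS_\varphi(\acl^{eq}(A))$ ``admits a $\tilde\varphi$-definition over $\acl^{eq}(A)$''. A point of $\tS_\varphi(\acl^{eq}(A))$ only records the values of the $\acl^{eq}(A)$-definable $\varphi$-predicates; to speak of $\chi(\bar x,c)^{p_{n,i}}$ for $c$ outside $\acl^{eq}(A)$, or of a definition at all, you must first know that $p_{n,i}$ has a \emph{unique} non-forking extension to a complete $\varphi$-type over a model and that this extension is definable over $\acl^{eq}(A)$ by a $\tilde\varphi$-predicate (this is exactly the content of the paper's claim that the restriction $X \to \tS_\varphi(\acl^{eq}(A))$ is a homeomorphism). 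Without this, the finite combinations $\psi_n^\chi$ are not even well-defined objects.

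The more serious gap is uniqueness. ``Its values on $\dcl^{eq}(A)$ are prescribed'' does not determine a definable predicate over $A$: two $\tilde\varphi$-predicates over $A$ can agree at every realized ($\dcl^{eq}(A)$-)parameter and still differ, since the realized types need not be dense in the relevant type space when $A$ is not a model. The uniqueness asserted in the corollary is uniqueness among definition schemes that yield a Keisler $\varphi$-measure over \emph{every} superset of $A$, and proving it requires showing that any $A$-invariant extension $\tilde\mu_\varphi'$ of $\mu_\varphi$ to a model is supported on the set $X$ of non-forking types --- this is \fref{cor:InvariantMeasureSupport}, which in turn rests on \fref{thm:MeasureCptSupp} --- and then applying forking symmetry ($\varphi(\bar x,\bar b)^p = \psi(\bar x)^p$ for $p \in X$, where $\psi$ is the $A$-definition of the non-forking $\tilde\varphi$-extension of $\tp_{\tilde\varphi}(\bar b/A)$) to see that the integrals against any two such extensions coincide. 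Your proposal omits this step entirely; as written, the ``moreover'' clause is not proved. The equi-uniform-continuity point you flag as the main obstacle is, by contrast, benign: the functions $p \mapsto \chi(\bar x,c)^p$ are equi-uniformly continuous with a modulus depending only on the combination data of the scheme $\chi$, uniformly in $c$.
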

\begin{proof}
  By \fref{lem:UniqueMeasureACLExtension}
  we may assume that $A = \acl^{eq}(A)$ is algebraically closed.
  Fix a very saturated and homogeneous  model $M \supseteq A$
  and let $X \subseteq \tS_\varphi(M)$ be the collection of
  $\varphi$-types which do not fork over $A$, i.e., which are
  definable over $A$.
  The restriction mapping
  $\pi\colon X \to \tS_\varphi(A)$ is a homeomorphism, so we may use
  it to pull $\mu_\varphi$ to a regular Borel probability measure
  $\tilde \mu_\varphi$ on $\tS_\varphi(M)$ which is supported on
  $X$.
  By \fref{cor:StabDefKeisMeasOverModel} $\tilde \mu_\varphi$ is
  definable.
  Since automorphisms which fix $A$ necessarily fix every point in
  $X$ they fix $\tilde \mu_\varphi$ and therefore fix its definition.
  Therefore $\tilde \mu_\varphi$ is definable over $A$.

  For uniqueness, assume that $\tilde \mu_\varphi'$ is another
  $A$-invariant extension of $\mu_\varphi$.
  By \fref{cor:InvariantMeasureSupport} it must be supported on $X$.
  Now fix $\bar b \in M$ and let
  $q(\bar y) \in \tS_{\tilde \varphi}(M)$ be the unique non-forking
  extension of $\tp_{\tilde \varphi}(\bar b/A)$ to $M$,
  $\psi(\bar x)$ is definition.
  Then $\psi$ is a $\varphi$-predicate over $A$.
  By forking symmetry, if $p \in X$ then
  $\varphi(\bar x,\bar b)^p = \psi(\bar x)^p$.
  Therefore
  \begin{align*}
    I_{\tilde \mu'} \varphi(x,\bar b) &
    = \int \varphi(x,\bar b)^p d\tilde \mu_\varphi'
    = \int_X \varphi(x,\bar b)^p d\tilde \mu_\varphi'
    \\ &
    = \int_X \psi(x)^p d\tilde \mu_\varphi'
    = \int \psi(x)^p d\tilde \mu_\varphi'
    \\ &
    = I_{\tilde \mu'} \psi(x,\bar b)
    = I_\mu \psi(x,\bar b) = \ldots
    = I_{\tilde \mu} \varphi(x,\bar b).
    \qedhere
  \end{align*}
\end{proof}

Given the uniqueness of the good definitions we may always regard
$I_\mu\varphi(\bar x,\bar y)$ as $\tilde \varphi$-predicate over the
parameter set of $\mu$  (or of $\mu_\varphi$) without ambiguity.

\begin{cor}
  \label{cor:Fubini}
  (Compare with Keisler \cite[Corollary~6.16]{Keisler:MeasuresAndForking}.)\\
  Let $A$ be a set and
  let $\varphi(\bar x,\bar y)$
  be any stable formula (or even definable predicate,
  possibly with hidden parameters in $A$).
  Let $\mu(\bar x)$ and $\nu(\bar y)$ be two Keisler measures over $A$.
  Then Fubini's Theorem holds for $\mu$ and $\nu$:
  $$I_{\mu(\bar x)}I_{\nu(\bar y)}\varphi(\bar x,\bar y) =
  I_{\nu(\bar y)}I_{\mu(\bar x)}\varphi(\bar x,\bar y).$$
\end{cor}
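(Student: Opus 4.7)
The plan is to reduce to finitely supported measures, where Fubini is trivial, and then pass to the limit using the approximation theorem \fref{cor:MeasureFiniteApprox}, the definability of stable Keisler measures \fref{cor:StabDefKeisMeasOverSet}, and the observation that the family $\{\varphi(\bar x,\bar b)\}_{\bar b}$ is uniformly bounded and equally uniformly continuous (all $1$-Lipschitz in $\bar x$ and in $\bar y$).

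First, I would apply \fref{cor:MeasureFiniteApprox} to both $\mu$ and $\nu$, producing sequences of finitely supported probability measures $\mu_n = \sum_i a_{n,i}\delta_{p_{n,i}}$ and $\nu_m = \sum_j b_{m,j}\delta_{q_{m,j}}$ converging weakly to $\mu$ and $\nu$ respectively. For each fixed pair $(n,m)$ the identity
\begin{gather*}
  I_{\mu_n(\bar x)} I_{\nu_m(\bar y)} \varphi(\bar x,\bar y)
  = \sum_{i,j} a_{n,i} b_{m,j}\, \varphi(\bar x,\bar y)^{p_{n,i},q_{m,j}}
  = I_{\nu_m(\bar y)} I_{\mu_n(\bar x)} \varphi(\bar x,\bar y)
\end{gather*}
is the trivial finite Fubini.

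Next, I would fix $m$ and let $n\to\infty$. On the left, $I_{\nu_m(\bar y)}\varphi(\bar x,\bar y)$ is a finite combination of predicates $\varphi(\bar x,\bar c)$, hence a $\varphi$-predicate over a parameter set, so weak convergence $\mu_n \to \mu$ gives $I_{\mu_n} I_{\nu_m}\varphi \to I_\mu I_{\nu_m}\varphi$. On the right, $I_{\mu_n(\bar x)}\varphi(\bar x,\bar y)$ is a continuous function of $\bar y$; by the moreover clause of \fref{cor:MeasureFiniteApprox}, applied to the equally $1$-Lipschitz family $\{\varphi(\bar x,\bar b)\}$, these converge \emph{uniformly} in $\bar y$ to $I_\mu \varphi$. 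Crucially, the limit $I_{\mu(\bar x)}\varphi(\bar x,\bar y)$ is itself continuous in $\bar y$ (in fact a $\tilde\varphi$-predicate) by \fref{cor:StabDefKeisMeasOverSet}, so integration against $\nu_m$ makes sense, and uniform convergence yields $I_{\nu_m} I_{\mu_n}\varphi \to I_{\nu_m} I_\mu \varphi$. Combining, $I_\mu I_{\nu_m}\varphi = I_{\nu_m} I_\mu\varphi$.

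Finally, I would let $m\to\infty$. On the left, the same uniform-convergence argument (now for $\nu_m\to\nu$ applied to the $1$-Lipschitz family $\{\varphi(\bar x,\bar b)\}$ in $\bar x$) gives $I_{\nu_m}\varphi \to I_\nu\varphi$ uniformly in $\bar x$, hence $I_\mu I_{\nu_m}\varphi \to I_\mu I_\nu\varphi$. On the right, $I_\mu\varphi$ is a bounded continuous function of $\bar y$, so weak convergence gives $I_{\nu_m} I_\mu \varphi \to I_\nu I_\mu \varphi$. This yields the desired identity. The only non-routine point — and the reason stability is essential — is the appeal to \fref{cor:StabDefKeisMeasOverSet} to know that the inner integral $I_\mu\varphi(\bar x,\bar y)$ is a continuous (indeed definable) function of $\bar y$; without stability this function could fail to be even Borel measurable, and the outer integral would be ill-defined.
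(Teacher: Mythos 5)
There is a genuine gap, and it sits exactly at the step you call ``the trivial finite Fubini.'' Your finitely supported measures $\mu_n = \sum_i a_{n,i}\delta_{p_{n,i}}$ and $\nu_m = \sum_j b_{m,j}\delta_{q_{m,j}}$ are supported on \emph{types over $A$}, not on realised tuples, so the quantity $\varphi(\bar x,\bar y)^{p_{n,i},q_{m,j}}$ in your double sum is not well defined a priori: a pair of complete types $p(\bar x)$, $q(\bar y)$ over $A$ does not determine the type of a pair $(\bar a,\bar b)$ with $\bar a\models p$ and $\bar b\models q$. The two iterated integrals for a pair of Dirac measures are $I_{p}I_{q}\varphi = \bigl(d_q\tilde\varphi(\bar x)\bigr)^p$ and $I_{q}I_{p}\varphi = \bigl(d_p\varphi(\bar y)\bigr)^q$, where $d_q\tilde\varphi$ and $d_p\varphi$ are the canonical $A$-invariant definitions supplied by \fref{cor:StabDefKeisMeasOverSet}; their equality is precisely forking symmetry for the stable formula $\varphi$, not a formal rearrangement of a finite double sum. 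This is the heart of the theorem, and it is where stability is used in an essential way: the paper's proof consists of exactly this observation (``this is just standard forking symmetry'') followed by the approximation step you describe. Your closing diagnosis --- that stability is needed only so that the inner integral is continuous, hence integrable --- misses this: even once both inner integrals are known to be definable predicates, their agreement still has to be proved, and for Dirac measures that agreement \emph{is} forking symmetry.

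The remainder of your argument --- the two-stage limit, using weak convergence against the fixed continuous predicate $I_{\nu_m}\varphi$ on one side and the uniform convergence from the moreover clause of \fref{cor:MeasureFiniteApprox} on the other --- is sound, and in fact spells out more carefully than the paper does why the finite case passes to the limit. One smaller caution: to apply the moreover clause to the family $\{\varphi(\bar x,\bar b)\}$ with $\bar b$ ranging outside $\dcl(A)$, you must first lift $\mu_\varphi$ to the measure on $\tS_\varphi(M)$ supported on the non-forking extensions (as in the proof of \fref{cor:StabDefKeisMeasOverSet}); over a bare set $A$ the individual instances $\varphi(\bar x,\bar b)$ are not continuous functions on $\tS_\varphi(A)$, so the family you integrate term by term only makes sense after that lift.
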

\begin{proof}
  We know by \fref{cor:StabDefKeisMeasOverSet} that
  $I_{\nu(\bar y)}\varphi(\bar x,\bar y)$ is a
  $\varphi$-predicate and $I_{\mu(\bar x)}\varphi(\bar x,\bar y)$ is a
  $\tilde \varphi$-predicate, so in fact the statement is only concerned
  with $\mu_\varphi$ and $\nu_{\tilde \varphi}$.
  In case these are Dirac measures, i.e., complete types, this is just
  standard forking symmetry:
  \begin{align*}
    I_{p(\bar x)}I_{q(\bar y)}\varphi(\bar x,\bar y)
    & = I_{p(\bar x)} d_q\tilde \varphi(\bar x) = d_q\tilde \varphi(\bar x)^p \\
    & = d_p\varphi(\bar y)^q = I_{q(\bar y)} d_p\varphi(\bar y)
    = I_{q(\bar y)}I_{p(\bar x)}\varphi(\bar x,\bar y).
  \end{align*}
  In the general case proceed use \fref{cor:MeasureFiniteApprox}
  to approximate $\mu_\varphi$ and $\nu_{\tilde \varphi}$ by finite sums of complete types with
  weights, and apply the first case.
\end{proof}

Note that if $T$ is stable we can then define a Keisler measure
$(\mu\times\nu)(\bar x,\bar y)$ over $A$ by:
$I_{\mu\times\nu}\varphi(\bar x,\bar y) =
I_{\mu(\bar x)}I_{\nu(\bar y)}\varphi(\bar x,\bar y)$.
This is the free product of $\mu$ and $\nu$, generalising free product
of types.

\section{Perturbation metrics}
\label{sec:PertMet}

In this section we apply earlier results to questions around
perturbations of continuous structures, originally studied in
\cite{BenYaacov:Perturbations}.
We shall therefore leave the abstract setting and deal
exclusively with topometric structures on type spaces
$\tS_n(T)$ (or $\tS_n(A)$) in the context of
continuous logic.

\subsection{Definitions and characterisations}

We recall from \cite{BenYaacov:Perturbations}
that a \emph{perturbation system}
$\fp$ for a theory $T$ can be given by a system of
\emph{perturbation metrics} $d_{\fp,n}$ on $\tS_n(T)$ for each $n < \omega$ such
that $n \mapsto (\tS_n(T),d_\fp)$ is a precise topometric functor, namely:
\begin{enumerate}
\item Each $(\tS_n(T),d_\fp)$ is a topometric space.
\item For every $n,m < \omega$ and mapping $\sigma\colon n \to m$, the corresponding
  mapping $\sigma^*\colon \tS_m(T) \to \tS_n(T)$ is a precise morphism of
  topometric spaces.
\end{enumerate}

We shall follow the notation from \cite{BenYaacov:Perturbations} and denote
${\overline B}_{d_\fp}(X,r)$ by $X^{\fp(r)}$.

\begin{dfn}
  Let $\fp$ be a perturbation system for $T$, $M,N \models T$, and $r \geq 0$.
  \begin{enumerate}
  \item A \emph{partial $\fp(r)$-perturbation} from $M$ to $N$
    is a partial mapping
    $f\colon M \dashrightarrow N$ such that for all
    $\bar a \in \dom(f)$: $d_\fp(\tp_M(\bar a),\tp_N(f(\bar a))) \leq r$.
  \item If $f$ above is bijective then it is a
    \emph{$\fp(r)$-perturbation} of $M$ into $N$.
    We denote the set of all such mappings by
    $\Pert_{\fp(r)}(M,N)$.
  \end{enumerate}
\end{dfn}

\begin{rmk}
  What we call a perturbation here was called a \emph{bi-perturbation}
  in \cite{BenYaacov:Perturbations}, the term perturbation being reserved
  there for the
  somewhat weaker notion of a total (but not necessarily surjective)
  partial perturbation.
  The distinction is more important when dealing with asymmetric
  perturbation radii and pre-radii with which much of that paper was
  concerned and which do not appear here at all.
  We apologise for the inconvenience.
\end{rmk}

The preciseness of $\sigma^*$ has a concrete meaning for various special
cases for $\sigma$:
\begin{itemize}
\item 
  Preciseness of $\sigma^*$ when $\sigma\colon 2\to1$ is the unique
  mapping is equivalent to the property that if $p(x,y) \in \tS_2(T)$ then
  either $p \in [x=y]$ or $d_\fp(p,[x=y]) = \infty$ (since $p \notin [x=y]$ implies that
  $d(q,(\sigma^*)^{-1}(p)) = d(q,\emptyset) = \infty$ for all $q \in \tS_1(T)$).
  By compactness it follows that for every $r>0$ and $\varepsilon > 0$ there is
  $\delta > 0$ such that
  $[d(x,y) <\delta]^{\fp(r)} \subseteq [d(x,y) \leq \varepsilon]$,
  and by symmetry:
  $[d(x,y) >\varepsilon]^{\fp(r)} \subseteq [d(x,y) \geq \delta]$.
  In particular every partial
  $\fp(r)$-perturbation is uniformly continuous and
  injective, and its inverse is a partial
  $\fp(r)$-perturbation by
  symmetry of $d_\fp$.
\item Preciseness of $\sigma^*$ when $\sigma\colon n \to n$ is a permutation just
  means that a permutation of the variables is an isometry of
  $(\tS_n(T),d_\fp)$, i.e., the notion of perturbation does not depend
  on the order of an enumeration.
\item Preciseness of $\sigma^*$ when $\sigma\colon n \to n +1$ is the inclusion tells
  us that a partial $\fp(r)$-perturbation can be extended to one more
  element.
\end{itemize}

Along with a standard back-and-forth argument this yields:
\begin{fct}
  \label{fct:PertExt}
  Let $\fp$ be a perturbation system for $T$, $r > 0$.
  Let $M,N \models T$, $\bar a \in M^n$, $\bar b \in N^n$.
  Then the following are equivalent:
  \begin{enumerate}
  \item $d_\fp(\tp_M(\bar a),\tp_N(\bar b)) \leq r$.
  \item There are $M' \succeq M$, $N' \succeq N$ and
    $f \in \Pert_{\fp(r)}(M',N')$ such that
    $f(\bar a) = \bar b$.
  \end{enumerate}
\end{fct}

This means that the perturbation system is determined by the mapping
$(M,N,r) \mapsto \Pert_{\fp(r)}(M,N)$, and it will be useful to give a
general characterisation of mappings of this form.
\begin{thm}
  \label{thm:PertChar}
  Let $\fp$ be a perturbation system for $T$.
  Then for each $r \in \bR^+$ and $M,N \in \Mod(T)$,
  $\Pert_{\fp(r)}(M,N)$ is a set of bijections of $M$ with $N$
  satisfying the following properties:
  \begin{enumerate}
  \item Monotonicity:
    $\Pert_{\fp(r)}(M,N) = \bigcap_{s>r} \Pert_{\fp(s)}(M,N)$.
  \item Strict reflexivity:
    $\Pert_{\fp(0)}(M,N)$ is the set of isomorphisms
    of $M$ with $N$.
  \item Symmetry:
    $f \in \Pert_{\fp(r)}(M,N)$ if and only $f^{-1} \in \Pert_{\fp(r)}(N,M)$.
  \item Transitivity:
    if
    $f \in \Pert_{\fp(r)}(M,N)$ and
    $g \in \Pert_{\fp(s)}(N,L)$ then
    $g \circ f \in \Pert_{\fp(r+s)}(M,L)$.
  \item Uniform continuity:
    for each $r \in \bR^+$, all members of $\Pert_{\fp(r)}(M,N)$, where
    $M,N$ vary over all models of $T$, satisfy a common modulus of
    uniform continuity.
  \item Ultraproducts:
    If $f_i \in \Pert_{\fp(r)}(M_i,N_i)$ for $i \in I$, and $\sU$ is
    an ultrafilter on $I$ then
    $\prod_\sU f_i \in \Pert_{\fp(r)}\bigl( \prod_\sU M_i, \prod_\sU N_i \bigr)$.
    (Note that $\prod_\sU f_i$ exists by the uniform continuity
    assumption).
  \item Elementary substructures:
    If $f \in \Pert_{\fp(r)}(M,N)$, $M_0 \preceq M$, and
    $N_0 = f(M_0) \preceq N$ then
    $f\rest_{M_0} \in \Pert_{\fp(r)}(M_0,N_0)$.
  \end{enumerate}
  Conversely, every mapping associating to
  every triplet $(r,M,N) \in \bR^+ \times \Mod(T)^2$ a set of bijections
  $\Pert'_r(M,N)$ satisfying the properties above is of the form
  $(r,M,N) \mapsto \Pert_{\fp(r)}(M,N)$ for a unique perturbation system
  $\fp$.
\end{thm}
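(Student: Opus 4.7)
The plan is to treat the forward direction ($\fp \Rightarrow \Pert'$) and the reverse direction ($\Pert' \Rightarrow \fp$) separately, with uniqueness of $\fp$ being immediate from \fref{fct:PertExt}. For the forward direction, most of the properties are direct translations of the axioms for a topometric metric: monotonicity (i) is automatic because $\{d_\fp \leq r\} = \bigcap_{s > r}\{d_\fp \leq s\}$; strict reflexivity (ii) is just $d_\fp(p,q) = 0 \iff p = q$; symmetry (iii) and transitivity (iv) restate the metric axioms; uniform continuity with a common modulus (v) was established in the discussion preceding \fref{fct:PertExt}; and elementary substructures (vii) is automatic because types are preserved. The only genuine verification is the ultraproduct property (vi): for $\bar a \in (\prod_\sU M_i)^n$, the type $\tp(\bar a)$ is the $\sU$-limit of $\tp(\bar a_i)$ in the logic topology of $\tS_n(T)$, and lower semi-continuity of $d_\fp$ then forces $d_\fp(\tp(\bar a), \tp((\prod_\sU f_i)(\bar a))) \leq r$.

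For the reverse direction, I would define on each $\tS_n(T)$
\[
d_\fp(p,q) = \inf\{r \geq 0 : \exists M,N \models T,\ \bar a \models p,\ \bar b \models q,\ f \in \Pert'_r(M,N) \text{ with } f(\bar a) = \bar b\}.
\]
Symmetry, strict reflexivity, and the triangle inequality for $d_\fp$ follow directly from properties (iii), (ii), and (iv). The key analytic step is lower semi-continuity, which simultaneously yields well-definedness: given a net $(p_i, q_i) \to (p, q)$ in $\tS_n(T)^2$ with witnesses $(M_i, N_i, f_i)$ showing $d_\fp(p_i, q_i) \leq r$, form an ultraproduct along an ultrafilter refining the net, invoke property (vi) to obtain $\prod_\sU f_i \in \Pert'_r(\prod_\sU M_i, \prod_\sU N_i)$, and use the $\sU$-limit-of-types property to extract a witness for $d_\fp(p,q) \leq r$. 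Since $\tS_n(T)$ is compact, \fref{lem:LowerSemiContinuity} then upgrades this to a genuine topometric structure.

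The main technical obstacle is showing that $\sigma^*\colon \tS_m(T) \to \tS_n(T)$ is precise for each $\sigma\colon n \to m$. Contractivity is trivial: a witness for $\bar a \mapsto \bar b$ in $\tS_m(T)$ restricts to one for the $\sigma$-indexed subtuples. The converse, lifting a witness from $\tS_n(T)$ to $\tS_m(T)$, requires a back-and-forth argument: given $p \in \tS_m(T)$ and $q' \in \tS_n(T)$ with $d_\fp(\sigma^*(p), q') \leq r$, realize $p$ in a sufficiently saturated $M$ and assemble the needed $f \in \Pert'_r(M', N')$ extending the partial witness for the projections, using the ultraproduct property (vi) to glue approximate witnesses and elementary substructures (vii) to cut them down. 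Once preciseness is established, a parallel back-and-forth argument confirms that $\Pert_{\fp(r)}(M, N) = \Pert'_r(M, N)$ for the reconstructed $\fp$, closing the loop; and uniqueness follows at once from \fref{fct:PertExt}.
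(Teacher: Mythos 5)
Your proposal is correct and follows essentially the same route as the paper: defining $d_\fp$ as the infimum over witnessing perturbations, using the ultraproduct property to get lower semi-continuity (and, via constant nets, attainment of the infimum needed for $d_\fp(p,q)=0\Rightarrow p=q$), deducing the metric axioms from symmetry/transitivity/reflexivity, establishing preciseness of $\sigma^*$ by passing to a saturated ultrapower, and recovering $\Pert'_r=\Pert_{\fp(r)}$ by gluing finite-tuple witnesses with an ultraproduct and invoking the elementary substructures property. The only difference is cosmetic: the paper omits the forward direction as immediate, which you verify explicitly.
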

\begin{proof}
  The first part is fairly immediate from facts we already know, so we
  only prove the converse.
  The uniqueness part follows from \fref{fct:PertExt} so we prove
  existence.

  Say that
  ``$d_\fp(p,q) \leq r$'' (in quotes, since we have not yet given a
  value to $d_\fp(p,q)$) if there are models $M,N \models T$
  and $f \in \Pert'_r(M,N)$ sending a realisation of $p$ to one of $q$.

  First we claim that ``$d_\fp(p,q) \leq r$'' if and only if
  ``$d_\fp(p,q) \leq s$'' for all $s > r$.
  Left to right is immediate from monotonicity.
  For right to left, let $s_n \searrow r$.
  For each $n$ let $f_n \in \Pert'_{s_n}(M_n,N_n)$ witness that
  ``$d_\fp(p,q) \leq s_n$'' sending $\bar a_n \in M_n$ to $\bar b_n \in N_n$.
  Let $\sU$ be a non-principal ultrafilter on $\omega$, and let
  $(M,N,f) = \prod_\sU (M_n,N_n,f_n)$.
  Then $f \in \Pert'_{s_n}(M,N)$ for all $n$ by the ultraproduct
  property and $f([a_n]) = [b_n]$,
  whereby $f \in \Pert'_r(M,N)$ witnesses that ``$d(p,q) \leq r$''.

  We may therefore define
  $d_\fp(p,q) = \inf \{r\colon \text{``$d_\fp(p,q) \leq r$''}\} \in [0,\infty]$ and
  drop the quotes.
  Strict reflexivity tells us that
  $d_\fp(p,q) = 0 \Longleftrightarrow p = q$, and
  symmetry and transitivity imply symmetry of $d_\fp$ and the triangle
  inequality, so $d_\fp$ is a metric.
  The ultraproduct property implies that
  $\{(p,q) \in \tS_n(T)^2\colon d_\fp(p,q) \leq r\}$ is closed,
  and $(\tS_n(T),d_\fp)$ is a topometric space for all $n$.

  For preciseness, let now $\sigma\colon n \to m$, $p \in \tS_m(T)$, $q \in \tS_n(T)$,
  and we need to show that
  $d_\fp(p,(\sigma^*)^{-1}(q)) = d_\fp(\sigma^*(p),q)$.
  Assume first that $d_\fp(p,(\sigma^*)^{-1}(q)) = r < \infty$,
  so $d_\fp(p,q') = r$ for some $q' \in (\sigma^*)^{-1}(q)$.
  Let $f \in \Pert'_r(M,N)$ witness this, sending
  $\bar a \models p$ (of length $m$) to $f(\bar a) \models q'$.
  Let $a'_i = a_{\sigma(i)}$ for $i<n$.
  Then $\bar a' \models \sigma^*(p)$ and $f(\bar a') \models q$, whereby
  $d_\fp(\sigma^*(p),q) \leq r$.
  Conversely, assume $d_\fp(\sigma^*(p),q) = r < \infty$.
  Let $f \in \Pert'_r(M,N)$ witness this, sending
  $\bar a \models \sigma^*(p)$ (of length $m$) to $f(\bar a) \models q$.
  By the ultraproduct property we may replace $M$ with an
  $\aleph_1$-saturated elementary extension, in which there is a
  tuple $c_{<m}$ such that $a_i = c_{\sigma(i)}$ for $i < n$.
  Then $f(\bar c)$ realises a type in $(\sigma^*)^{-1}(q)$, showing that
  $d_\fp(p,(\sigma^*)^{-1}(q)) \leq r$.
  Equality follows.

  We have shown that $d_\fp$ is indeed the perturbation metric
  associated to a perturbation system $\fp$.
  The inclusion $\Pert'_r(M,N) \subseteq \Pert_{\fp(r)}(M,N)$ is immediate
  from the construction.
  Assume now that $f \in \Pert_{\fp(r)}(M,N)$.
  Let $A \subseteq M$ be a finite subset and enumerate it as a tuple $\bar a$.
  Let $p = \tp(\bar a)$, $q = \tp(f(\bar a))$, so
  $d_\fp(p,q) \leq r$.
  This is witnessed by some $f_A \in \Pert'_r(M_A,N_A)$
  sending $\bar a' \models p$ to $f_A(\bar a') \models q$.
  In other words, there are partial elementary mappings
  $\theta_A\colon M \dashrightarrow M_A$ and
  $\theta'_A\colon N \dashrightarrow N_A$, with
  domains $A$ and $f(A)$, respectively,
  such that $f_A \circ \theta_A = \theta'_A \circ f$ on $A$.
  Set $I = \{A\subseteq M\colon |A| < \infty\}$
  and let $\sU$ be an ultrafilter on $I$
  containing the set $\{A \in I\colon a \in A\}$ for each $a \in M$.
  Let $(\tilde M,\tilde N, \tilde f) = \prod_\sU (M_A,N_A,f_A)$, so
  $\tilde f \in \Pert'_r(\tilde M,\tilde N)$ by the ultraproduct
  property.
  Define $\theta\colon M \to \tilde M$ by $\theta(a) = [a_A]_{A \in I}$ where
  $a_A = \theta_A(a)$ if $a \in A$, and anything otherwise.
  Define $\theta'\colon N \to \tilde N$ similarly.
  Then $\theta$ and $\theta'$ are (total) elementary embeddings,
  and $\tilde f \circ \theta = \theta' \circ f$.
  It follows that $f \in \Pert'_r(M,N)$ by the elementary
  substructures property, as desired.
\end{proof}

\begin{rmk}
  Let $\cL^2_f$ be the language for triplets $(M,N,f)$,
  consisting of two disjoint copies of $\cL$ plus a new
  function symbol $f$ going from one to the other.
  Then the ultraproduct and elementary
  substructures properties are \emph{not} equivalent
  (modulo previous axioms) to
  the elementarity of the class
  \begin{gather*}
    \{(M,N,f)\colon M,N \models T, f \in \Pert_\fp(M,N)\}.
  \end{gather*}
  Indeed, The assumption that $(M,N,f) \subseteq (M',N',f')$ and
  $M \preceq M'$, $N \preceq N'$ does not imply that
  $(M,N,f) \preceq (M',N',f')$, so elementarity is not strong enough to
  imply the elementary substructures property.

  The correct equivalent assumption is that the class above is
  elementary and that its theory is ``universal over $\cL$''.
  In case $T$ eliminates quantifiers (which we may always assume) this
  just means that it is given by:
  \begin{gather*}
    \{M \models T\} \cup \{N \models T\} \cup \{\text{some universal axioms (involving both
      copies and $f$)}\}.
  \end{gather*}
\end{rmk}

\subsection{Extensions of perturbation systems}

By definition, a perturbation system $\fp$ for a theory $T$
compares complete types \emph{without} parameters, telling us
the by how much one needs to be perturbed in order to obtain the
other.
What about types \emph{with} parameters?
Adding a parameter in a set $A \subseteq M$ to the language consists of two
steps: adding new constant symbols to the language for the members of
$A$, and replacing $T$ with $T(A) = \Th_{\cL(A)}(M)$.
Given a perturbation system for $T$ in $\cL(A)$, the second step
merely consists of the restriction to a smaller family of type, so let
us concentrate on the first step.
More generally, let us explore the extension of perturbation systems
to a bigger language $\cL' \supseteq \cL$.
Replacing a function symbol $f(\bar x)$ with the predicate
$G_f(\bar x,y) = d(f(\bar x),y)$, we may assume only predicate symbols
are added.

Let us first consider the case where a single new symbol is added:
$\cL_P = \cL \cup \{P\}$.
Let $T'$ denote the set of $\cL_P$-consequence of $T$ (i.e., $T$ viewed
as an $\cL_P$-theory).
Then it is natural (to the author, at least, which is what counts at
the moment) to extend $\fp$ to a perturbation system $\fp_P$ for
$T'$ allowing small perturbations of
$P$.
Thus for every $(M,P^M),(N,P^N) \models T'$ we define:
\begin{gather*}
  \Pert_{\fp_P(r)}\bigl( (M,P^M),(N,P^N) \bigr)
  =
  \left\{
    \theta \in \Pert_{\fp(r)}(M,N)\colon
    \begin{aligned}[c]
      & \text{for all $\bar b\in M$:}  \\
      & |P^M(\bar b)-P^N(\theta(\bar b))| \leq r
  \end{aligned}
  \right\}.
\end{gather*}
It is fairly straightforward to verify that this definition satisfies
the list of properties from \fref{thm:PertChar}, and thus indeed
defines a perturbation system $\fp_P$.

In case we wish to add several new symbols $\bar P = \{P_i\colon i < k\}$, we
merely iterate this construction.
\begin{multline*}
  \Pert_{\fp_{\bar P}(r)}\bigl( (M,P_0^M,\ldots),(N,P_0^N,\ldots) \bigr) = \\
  \left\{
    \theta \in \Pert_{\fp(r)}(M,N)\colon
    \begin{aligned}[c]
      & \text{for all $i < k$ and $\bar b\in M$:}\\
      & |P_i^M(\bar b)-P_i^N(\theta(\bar b))| \leq r
    \end{aligned}
  \right\}.
\end{multline*}
This is particularly elegant as it does not depend
on the order in which we add the symbols.
However, one could come up with several variants of this definition,
such as:
\begin{multline*}
  \Pert_{\fp'_{\bar P}(r)}\bigl( (M,P_0^M,\ldots),(N,P_0^N,\ldots) \bigr) = \\
  \left\{
    \theta \in \Pert_{\fp(r)}(M,N)\colon
    \begin{aligned}[c]
      & \text{for all $i < k$ and $\bar b\in M$:}\\
      & |P_i^M(\bar b)-P_i^N(\theta(\bar b))| \leq 2^{-i}r
    \end{aligned}
  \right\}.
\end{multline*}
Or:
\begin{multline*}
  \Pert_{\fp''_{\bar P}(r)}\bigl( (M,P_0^M,\ldots),(N,P_0^N,\ldots) \bigr) = \\
  \left\{
    \theta \in \Pert_{\fp(r)}(M,N)\colon
    \begin{aligned}[c]
      & \text{for all $i < k$ and $\bar b\in M$:}\\
      & |P_i^M(\bar b)-P_i^N(\theta(\bar b))| \leq 2^ir
    \end{aligned}
  \right\}.
\end{multline*}
As long as we only add finitely many symbols, all three definitions
are equivalent, in the sense that the metrics $d_{\fp_{\bar P}}$,
$d_{\fp'_{\bar P}}$ and $d_{\fp''_{\bar P}}$ are all uniformly
equivalent metrics.
Of course, one can come up with many more variants of this kind, but as
long as we allow to perturb each of the finitely many new symbols, we
are always going to get something equivalent to $\fp_{\bar P}$, which,
as we said, seems the most elegant of the lot.

Let us now consider the case where countably many new symbols are
added.
All three constructions suggested above admit an obvious
generalisation to $\bar P = \{P_i\colon i < \omega\}$.
However, an essential distinction now presents itself between
$\fp_{\bar P}$, $\fp'_{\bar P}$ on the one hand, and
$\fp''_{\bar P}$ on the other.

Indeed, $\fp''_{\bar P}$ is a very relaxed perturbation system, as a
positive perturbation distance only takes into account
finitely many of $\{P_i\colon i < \omega\}$.
More precisely, the question whether or not
$\theta \in \Pert_{\fp''_{\bar P}(r)}(M,N)$ depends only on
$\{P_i\colon 0 \leq i < -\log_2 r\}$.
This is essentially the only way of getting a
non trivial perturbation
system for first order logic.
For example, let $T$ be a theory in a countable language $\cL$.
Let $\fp$ be the trivial perturbation system for the
language of equality, and let $\fp''_\cL$ be as above.
Then $T$ is $\fp''_\cL$-$\aleph_0$-categorical if and only if every
restriction of $T$ to a finite sub-language is $\aleph_0$-categorical.
In the case of an uncountable tuple $\bar P$, in order to obtain a
``perturbation system'' with similar properties we should have
to replace metrics with non-metrisable uniform structures.
At the moment we do not see the point in doing so,
as the usefulness of $\fp''_{\bar P}$ for infinite $\bar P$
is not at all clear.
In particular, type spaces over infinitely many parameters in classical
logic do not involve non-trivial perturbation systems, so
the ``correct'' way to extend $\fp$ to types over an infinite set $A$
should go through another construction.

In contrast,
$\fp_{\bar P}$ and $\fp'_{\bar P}$ can be arbitrarily strict when
applied judiciously to infinitely many new symbols.
Indeed, in the case of $\fp'_{\bar P}$, we may enumerate $\bar P$ with
repetitions, repeating each symbols infinitely many times
(or, if this bothers the reader, we could enumerate many copies of
each new symbol and later add to $T$ the axioms that all copies of a
single symbol coincide).
In that case, a $\fp'_{\bar P}(r)$-perturbation of
$\cL_{\bar P}$-structures would necessarily fix the interpretation of
every new predicate symbol $P_i$.
In case of the apparently more relaxed $\fp_{\bar P}$, the same can be
achieved by replacing each new symbol $P_i$ with a tree of symbols
$\{P_i^\sigma\colon \sigma \in 2^{<\omega}\}$,
viewing $P_i^\emptyset$ as $P_i$, and adding
axioms that $P_i^{\sigma0} = 2P_i^\sigma \wedge 1$ and
$P_i^{\sigma1} = (2P_i^\sigma - 1) \vee 0$.
Then for $r < 1$ a $\fp_{\bar P}(r)$-perturbation of models of these
axioms would necessarily fix all the new symbols.

The somewhat philosophical discussion in the previous paragraph is
meant to convince the reader that when adding infinitely many
new symbols to the language, the most reasonable (and canonical) way
of extending $\fp$ is by fixing all the new symbols.
In that case we might as well define directly the extension
$\fp\rest_{\bar P}$ (``$\fp$ \emph{over} $\bar P$'') by:
\begin{multline*}
  \Pert_{\fp\rest_{\bar P}(r)}\bigl( (M,\bar P^M),(N,\bar P_0^N) \bigr) = 
  \left\{
    \theta \in \Pert_{\fp(r)}(M,N)\colon
    P^M = P^N \circ \theta \text{ for all $P \in \bar P$}
  \right\}.
\end{multline*}
In particular, when we extend $\fp$ to $\tS_n(A)$ where $A$ is
infinite we shall use $\fp\rest_A$.

Let us now re-examine the distance between two types
$p,q \in \tS_n(T)$.
If $\fp$ is a perturbation system then $d_\fp(p,q)$
measures by how much a realisation of $p$ needs to be perturbed in
order to get a realisation of $q$.
But we can also identify $p$ and $q$ with completions of $T$ in the
language $\cL(\bar c)$, where $\bar c$ is an $n$-tuple of new constant
symbols, which we denote by $p',q' \in \tS_0^{\cL(\bar c)}(T)$.
Let $\id$ be the trivial perturbation system for $T$, and let
$\id_{\bar c}$ be constructed as above.
Then $\Pert_{\id_{\bar c}}((M,\bar a),(N,\bar b))$ consists of all
isomorphisms of $\theta\colon M \to N$ such that $d(\bar b,\theta(\bar a)) \leq r$,
and $d_{\id_{\bar c}}(p',q')$ is simply the standard distance
$d(p,q)$, measuring by how much a realisation of $p$ needs to be moved
in order to obtain a realisation of $q$.
Finally, we can combine both constructions defining
$\tilde d_\fp(p,q) = d_{\fp_{\bar c}}(p',q')$.
We obtain a notion of
distance which allows both to perturb the underlying structure and to
move the realisations.
We could also define it directly (as was done in \cite{BenYaacov:Perturbations}) as:
\begin{gather*}
  \tilde d_\fp(p,q) =
  \inf \left\{
    r\geq 0\colon
    \begin{aligned}[c]
      & \Bigl(\exists\, M \models p(\bar a),\, N \models q(\bar b),\,
      \theta \in \Pert_{\fp(r)}(M,N) \Bigr) \\
      & \qquad \Bigl(\forall\, c \in M,\, i<n\Bigr)
      \Bigl( |d^M(c,a_i) - d^N(\theta(c),b_i)| \leq r \Bigr)
    \end{aligned}
  \right\}.
\end{gather*}

In the terminology of \fref{sec:Isol} we can restate
\cite[Theorem~3.8 and Proposition~3.9]{BenYaacov:Perturbations} as:
\begin{quote}
  A complete countable theory $T$ is $\fp$-$\aleph_0$-categorical if and
  only if every finite tuple $\bar a$, every type
  $p \in \tS_1(\bar a)$ is weakly
  $\tilde d_{\fp_{\bar a}}$-isolated.
\end{quote}
And:
\begin{quote}
  If $T$ is a complete countable theory and every type
  $p \in \tS_n(T)$ is $\tilde d_\fp$-isolated, then $T$ is
  $\fp$-$\aleph_0$-categorical.
\end{quote}

\begin{rmk}
  C.\ Ward Henson pointed out that if $(X,d)$ is a complete
  topometric space, then the following are equivalent:
  \begin{enumerate}
  \item Every point $x \in X$ is weakly $d$-isolated.
  \item The set of $d$-isolated points in $d$-dense in $X$.
  \end{enumerate}
  This is a special case of \fref{lem:WIsolRank}.
\end{rmk}

\subsection{$\lambda$-stability up to perturbation}

In the course of studying metric structures one encounters many which
should, according to all moral standards, be $\aleph_0$-stable (or at least
superstable), but are not.
Examples for this are probability spaces with a generic automorphism
\cite{BenYaacov-Berenstein:HilbertProbabilityAutmorphismPerturbation}
or Nakano spaces \cite{BenYaacov:NakanoSpaces}.
Reassuringly enough, both turn out to be $\aleph_0$-stable up to a natural
perturbation system.
Our earlier work allows us to conclude almost immediately that this
notion of $\aleph_0$-stability, and more generally, of $\lambda$-stability,
satisfies some expected properties.
In particular, $\aleph_0$-stability coincides with the existence
of appropriate Morley ranks.
As pointed out in the introduction, definitions and results of
Iovino \cite{Iovino:StableBanach} can be viewed as precursors to some
presented here.

\begin{conv}
  Henceforth, when $\fp$ is a perturbation system for $T$ and $A$ a
  set of parameters, we always interpret $d_\fp$ on $\tS_n(A)$ as
  $d_{\fp\rest_A}$, and accordingly, $\tilde d_\fp$ as
  $\tilde d_{\fp\rest_A}$.
\end{conv}

\begin{dfn}
  Let $T$ be a theory, $\lambda \geq |\cL|$, and
  $\fp$ a perturbation system for $T$.
  We say that $T$ is $\fp$-$\lambda$-stable if
  $\|(\tS_n(A),\tilde d_\fp)\| \leq \lambda$
  whenever $|A| \leq \lambda$.
\end{dfn}

First of all it should be pointed out that for any
perturbation system $\fp$, $\fp$-$\lambda$-stability is weaker than
$\lambda$-stability, since $\tilde d_\fp$ is always coarser than the
standard metric $d$ (which coincides with $\tilde d_{\id}$).
We thus need to make sure that $\fp$-$\lambda$-stability is still strong
enough to imply stability.

\begin{fct}
  Let $T$ be a theory, $\lambda \geq |\cL|$, and
  $\fp$ a perturbation system for $T$.
  Then $T$ is $\fp$-$\lambda$-stable if and only if for any model $M \models T$:
  $\|M\| = \lambda \Longrightarrow \|(\tS_n(M),\tilde d_\fp)\| = \lambda$.
\end{fct}
\begin{proof}
  By L\"owenheim-Skolem and the fact that if $A \subseteq M$ is dense then
  $(\tS_n(A),\tilde d_\fp) \cong (\tS_n(M),\tilde d_\fp)$.
\end{proof}

\begin{lem}
  Let $T$ be a theory, $\fp$ a perturbation system for $T$,
  and $M \models T$.
  Let $\varphi(\bar x,\bar y)$ be any formula, $|\bar x| = n$.
  Then
  $\pi_\varphi\colon (\tS_n(M),\tilde d_\fp)
  \to (\tS_\varphi(M),d_\varphi)$ is uniformly
  continuous and thus a morphism of topometric spaces.
\end{lem}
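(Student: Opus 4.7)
The plan is to exhibit a uniform continuity modulus for $\pi_\varphi$ directly from the concrete realisation-plus-perturbation description of $\tilde d_\fp$. Given $\varepsilon > 0$ I shall find $\delta > 0$ such that $\tilde d_\fp(p,q) \leq \delta$ in $\tS_n(M)$ implies $|\varphi(\bar x,\bar e)^p - \varphi(\bar x,\bar e)^q| \leq \varepsilon$ for every $\bar e \in M^{|\bar y|}$, which is exactly the statement $d_\varphi(\pi_\varphi(p), \pi_\varphi(q)) \leq \varepsilon$.

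The key auxiliary fact I need is the following: for every formula $\psi(\bar z)$ and every $\varepsilon' > 0$, there is $\delta' > 0$ such that whenever $\theta \in \Pert_{\fp(\delta')}(M',N')$ and $\bar c \in M'$ is a tuple of the appropriate length, one has $|\psi^{M'}(\bar c) - \psi^{N'}(\theta(\bar c))| \leq \varepsilon'$. To prove this, view $\psi$ as a continuous function on the compact Hausdorff type space $(\tS_{|\bar z|}(T), \sT)$; uniform continuity with respect to the unique compatible uniformity produces an $\sT$-open neighbourhood $U$ of the diagonal on which $\psi$ varies by at most $\varepsilon'$. By lower semi-continuity of $d_\fp$, the sets $\{(p,q) \colon d_\fp(p,q) \leq r\}$ are $\sT$-closed, and they decrease to the diagonal as $r \searrow 0$; a standard compactness argument on the compact square then yields some $\delta' > 0$ with $\{d_\fp \leq \delta'\} \subseteq U$. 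Applying this to the pair $(\tp^{M'}(\bar c), \tp^{N'}(\theta(\bar c)))$, which lies in $\{d_\fp \leq \delta'\}$ by the very definition of a $\fp(\delta')$-perturbation, delivers the bound.

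For the main argument, given $\varepsilon > 0$, let $\Delta_\varphi$ denote the standard modulus of uniform continuity of $\varphi$ as a formula, and choose $\delta > 0$ small enough that $\delta \leq \Delta_\varphi(\varepsilon/2)$ and the auxiliary fact applied to $\varphi$, viewed as a formula in $n + |\bar y|$ free variables, yields the bound $\varepsilon/2$. Suppose $\tilde d_\fp(p,q) \leq \delta$. Using the explicit description of $\tilde d_\fp$ together with the convention that on $\tS_n(M)$ the relevant perturbation system is $\fp\rest_M$, we obtain elementary extensions $M', N' \succeq M$, realisations $\bar a \models p$ in $M'$ and $\bar b \models q$ in $N'$, and $\theta \in \Pert_{\fp\rest_M(\delta)}(M',N')$ satisfying $|d(c,a_i) - d(\theta(c),b_i)| \leq \delta$ for every $c \in M'$ and $i < n$. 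Taking $c = a_i$ gives $d(\theta(a_i), b_i) \leq \delta$, while the $\fp\rest_M$-condition forces $\theta$ to fix every element of $M$ pointwise (the constants naming elements of $M$ are preserved exactly), so $\theta(\bar e) = \bar e$ for $\bar e \in M^{|\bar y|}$. A two-step triangle inequality then bounds $|\varphi(\bar a,\bar e)^{M'} - \varphi(\theta(\bar a),\theta(\bar e))^{N'}|$ by $\varepsilon/2$ via the auxiliary fact, and $|\varphi(\theta(\bar a),\bar e)^{N'} - \varphi(\bar b,\bar e)^{N'}|$ by $\varepsilon/2$ via standard uniform continuity of $\varphi$ (using $\theta(\bar e) = \bar e$ and $d(\theta(\bar a),\bar b) \leq \delta$); summing these gives the desired bound uniformly in $\bar e$.

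The main obstacle is the auxiliary fact: $d_\fp$ need not induce the topological uniformity on the type space, so one cannot invoke uniform continuity of $\psi$ with respect to $d_\fp$ directly. What makes things work is the combination of lower semi-continuity of $d_\fp$ with compactness of $\tS_{|\bar z|}(T)$. Everything else reduces to routine bookkeeping with the combined metric $\tilde d_\fp$ and the constant-preservation property of $\fp\rest_M$-perturbations.
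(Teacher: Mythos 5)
Your proof is correct and follows essentially the same route as the paper: the paper's argument also splits $\delta$ into a $\delta_1$ controlling how much a $\fp(\delta_1)$-perturbation can change the value of $\varphi$ and a $\delta_2$ coming from the ordinary uniform continuity of $\varphi$, then combines them by a triangle inequality after unwinding $\tilde d_\fp$. The only difference is one of detail: you prove the auxiliary fact (perturbations move formula values by little) from lower semi-continuity of $d_\fp$ and compactness, and you spell out that $\fp\rest_M$-perturbations fix $M$ pointwise, both of which the paper leaves implicit.
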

\begin{proof}
  We need to show that for all $\varepsilon > 0$
  there is $\delta > 0$ such that if
  $p,q \in \tS_n(M)$ and $\tilde d_\fp(p,q) < \delta$ then
  for all $\bar b \in M$:
  $|\varphi(\bar x,\bar b)^p - \varphi(\bar x,\bar b)^q| \leq \varepsilon$.
  Indeed, as $\fp$ is a perturbation system for $T$,
  one can find $\delta_1 > 0$ such that
  whenever $\theta \in \Pert_{\fp(\delta_1)}(M,N)$ then
  for all $\bar a,\bar b \in M$:
  $|\varphi(\bar a,\bar b)^M
  - \varphi(\theta(\bar a),\theta(\bar b))^N| \leq \varepsilon/2$.
  By uniform continuity one can find $\delta_2 > 0$ such that if
  $\bar a,\bar a',\bar b \in M$ and
  $d(\bar a,\bar a') < \delta_2$ then
  $|\varphi(\bar a,\bar b)^M - \varphi(\bar a',\bar b)^M|
  \leq \varepsilon/2$.
  Now $\delta = \min \{\delta_1,\delta_2\}$ will do.
\end{proof}

\begin{prp}
  Let $\fp$ be any perturbation system for $T$.
  Then $T$ is stable if and only if $T$ is $\fp$-$\lambda$-stable for
  some $\lambda$, if and only if $T$ is
  $\fp$-$\lambda$-stable for all $\lambda = \lambda^{|\cL|}$.
\end{prp}
\begin{proof}
  Assume that $T$ is stable and let $\lambda = \lambda^{|\cL|}$.
  Then $|\tS_n(M)| = \lambda$ whenever $\|M\| = \lambda$,
  so $T$ is $\fp$-$\lambda$-stable independently of $\fp$.
  In particular, $T$ is $\fp$-$2^{|\cL|}$-stable.

  Conversely, assume $T$ is unstable, say due to an unstable formula
  $\varphi$, and let $\lambda \geq |\cL|$.
  Then there exists $M \models T$ such that
  $\|M\| = \lambda$ and $\|\tS_\varphi(M)\| > \lambda$.
  Since the projection
  $(\tS_n(M),\tilde d_\fp) \to (\tS_\varphi(M),d_\varphi)$ is
  uniformly continuous, it follows that
  $\|(\tS_n(M),\tilde d_\fp)\| > \lambda$,
  and $T$ is not $\fp$-$\lambda$-stable.
\end{proof}

\begin{rmk}
  Recall the various alternatives we considered for the extension of a
  perturbation system $\fp$ to countably many new symbols
  (in this case, constant symbols): the strict variants $\fp_A$ and
  $\fp'_A$ allowed us essentially to fix $A$ (as long as we enumerate
  it judiciously enough), while the relaxed variant $\fp''_A$ only
  considers finite parts of $A$.
  Had we chosen the latter as a basis for extending perturbation
  systems to new parameters, the previous Proposition would fail.
  Indeed, if $T$ is any small theory and $A$ any countable set, then
  $\|(\tS_n(A),\tilde d_{\fp''_A})\| = \aleph_0$, even though $T$ need not
  be stable.
\end{rmk}

\begin{lem}
  Let $\fp$ be a perturbation system for $T$,
  $A \subseteq B \subseteq M \models T$.
  Then the projection map
  $\pi\colon (\tS_n(B),\tilde d_\fp) \to
  (\tS_n(A),\tilde d_\fp)$ is precise (and in particular a
  quotient of topometric spaces).
\end{lem}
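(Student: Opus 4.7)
The plan is to establish both inequalities in the precise equation
$\tilde d_\fp(\pi(p), q) = \tilde d_\fp(p, \pi^{-1}(q))$
for all $p \in \tS_n(B)$ and $q \in \tS_n(A)$.

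The inequality $\tilde d_\fp(\pi(p), q) \leq \tilde d_\fp(p, q')$ for every $q' \in \pi^{-1}(q)$ is immediate from the definitions. A $\fp\rest_B(r)$-perturbation between $\cL(B)$-models is \emph{a fortiori} a $\fp\rest_A(r)$-perturbation of the underlying $\cL(A)$-structures, since $A \subseteq B$ imposes only weaker fixing constraints. If $\bar a \models p$ and $\bar b \models q'$, then on passage to the $\cL(A)$-reducts one has $\bar a \models \pi(p)$ and $\bar b \models q$, and the distance condition transfers verbatim, so any $B$-side witness yields an $A$-side witness.

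For the reverse inequality, fix $r > \tilde d_\fp(\pi(p), q)$ and build $q' \in \pi^{-1}(q)$ with $\tilde d_\fp(p, q') \leq r$. Choose an $\cL(B)$-elementary extension $M_0 \succeq M$ containing realizations $\bar a$ of $p$ and $\bar b$ of $q$. Applying Fact~\fref{fct:PertExt:} to the perturbation system $(\fp\rest_A)_{\bar c}$ with $M_0$ as common initial model yields $\cL(A\cup\bar c)$-elementary extensions $N_1,N_2 \succeq M_0$ and a $(\fp\rest_A)_{\bar c}(r)$-perturbation $\theta\colon N_1 \to N_2$ sending $\bar a \mapsto \bar b$. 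Since $\cL(A)$-elementary extensions of a common $\cL(B)$-structure remain $\cL(B)$-elementary (the parameters in $B\setminus A$ are interpreted canonically by virtue of the inclusion into $M_0$), both $N_1$ and $N_2$ are $\cL(B)$-models of $\Th_{\cL(B)}(M)$, $\bar a$ still realizes $p$ over $B$, and the type $q' := \tp^{N_2}_{\cL(B)}(\bar b)$ lies in $\pi^{-1}(q) \subseteq \tS_n(B)$.

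It remains to upgrade $\theta$, which so far only fixes $A$, into a $\fp\rest_B(r)$-perturbation. The natural move is to pre-compose with an $\cL$-automorphism $\tau \in \Aut(N_2/A)$ sending $\theta(b)$ back to $b$ for each $b \in B \setminus A$. Such $\tau$ exists provided $N_2$ is sufficiently saturated and homogeneous over $A$ (arrangeable via further ultrapower in Fact~\fref{fct:PertExt:}), and provided the tuples $\theta(B\setminus A)$ and $B\setminus A$ realize the same $\cL(A)$-type in $N_2$. The composite $\tau \circ \theta$ is then a $\fp\rest_B(r)$-perturbation, its image of $\bar a$ is $\tau(\bar b)$, which again realizes $q'$ since $\tau$ fixes $A$, and the distance condition passes through $\tau$ as a $\cL$-isometry.

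The technical heart of the argument—and the main obstacle—is guaranteeing the existence of $\tau$, equivalently the $\Aut(N_2/A)$-conjugacy of $\theta(B\setminus A)$ and $B\setminus A$. This rests on the predicate-formulation of $\fp\rest_A$ (under which $\theta$ preserves each $G_a(y) = d(a,y)$ exactly for $a \in A$) together with the saturation arranged via Fact~\fref{fct:PertExt:}; the careful bookkeeping needed to extract $\cL(A)$-type equality from this exact preservation is what the proof must supply.
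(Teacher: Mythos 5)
The paper offers no proof to compare against (it says ``Exercise''), so I can only assess your argument on its own terms, and as written it does not close. Your first inequality is correct, and your setup for the converse (a common initial model $M_0 \succeq M$, so that $N_1$ and $N_2$ are naturally $\cL(B)$-elementary extensions of $M_0$ and hence models of $\Th_{\cL(B)}(M)$) is also correct. The gap is exactly where you place it, but the repair you sketch cannot work. The automorphism $\tau \in \Aut(N_2/A)$ with $\tau(\theta(b)) = b$ requires that $\theta(B\setminus A)$ and $B \setminus A$ realise the same $\cL(A)$-type in $N_2$, and you propose to extract this from the fact that $\theta$ preserves the predicates $G_a(y) = d(a,y)$ exactly for $a \in A$. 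That is far too weak: exact preservation of distances to $A$ says nothing about preservation of types over $A$. A $\fp(r)$-perturbation is by design a map that changes types (by up to $r$ in $d_\fp$), so in general $\tp^{N_2}(\theta(b)/A) \neq \tp^{N_1}(b/A)$ for $b \in B \setminus A$, and no amount of saturation restores conjugacy. (Concretely: take $T$ a theory of a discrete metric space with one $[0,1]$-valued predicate $P$ and the system allowing $P$ to move by $r$; every perturbation preserves all distances exactly, yet changes the type over $A = \emptyset$ of every point whose $P$-value it alters.)

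There is a second, independent problem even where $\tau$ does exist. The condition defining $\fp\rest_B$ is not merely $\theta^*(b) = b$ but the exact identity $d^{P_1}(b,x) = d^{P_2}(b,\theta^*(x))$ for all $x$ and all $b \in B$. For the composite $\tau\circ\theta$ one gets $d(b, \tau\theta(x)) = d(\theta(b), \theta(x))$, and since a $\fp(r)$-perturbation is only uniformly continuous, not isometric, this need not equal $d(b,x)$; so $\tau\circ\theta$ fails the membership condition for $\Pert_{\fp\rest_B(r)}$ even when it fixes $B$ pointwise. The real content of the lemma is to replace a witness that fixes only $A$ (in this exact, predicate-preserving sense) by one that fixes all of $B$ without increasing the radius $r$, while retaining control of the target type over $A$; this does not follow from composing the given $\theta$ with isomorphisms, and your proof supplies no mechanism for producing such a witness. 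You need either a compactness argument reducing to finite $B \setminus A$ together with a genuine construction of a new perturbation fixing the extra parameters, or some other use of the axioms of \fref{thm:PertChar} that you have not identified.
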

\begin{proof}
  Exercise.
\end{proof}

It follows that:
\begin{prp}
  Let $T$ be a countable theory, $\fp$ a perturbation system for $T$.
  Then the collection
  $\{(\tS_n(M'),\tilde d_\fp)\colon M' \preceq M\}$
  is a sufficient family of quotients of
  $(\tS_n(M),\tilde d_\fp)$.
\end{prp}

\begin{thm}
  Let $T$ be a countable theory, $\fp$ a perturbation system for $T$.
  Then the following are equivalent:
  \begin{enumerate}
  \item $T$ is $\fp$-$\lambda$-stable for all $\lambda$.
  \item $T$ is $\fp$-$\aleph_0$-stable.
  \item For every separable model $M$:
    $\|(\tS_n(M),\tilde d_\fp)\| = \aleph_0$.
  \item For every model $M$, the space
    $\|(\tS_n(M),\tilde d_\fp)\|$ is CB-analysable.
  \end{enumerate}
\end{thm}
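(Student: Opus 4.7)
The plan is to close the loop (1) $\Rightarrow$ (2) $\Rightarrow$ (3) $\Rightarrow$ (4) $\Rightarrow$ (1). The implication (1) $\Rightarrow$ (2) is trivial (take $\lambda = \aleph_0$), and the equivalence (2) $\Leftrightarrow$ (3) is immediate from the Fact stated just before this theorem (that $\fp$-$\lambda$-stability is equivalent to $\|(\tS_n(M),\tilde d_\fp)\| = \lambda$ for models $M$ of density character $\lambda$).

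The key implication is (3) $\Rightarrow$ (4). Here I would invoke the Proposition stated immediately before the theorem, asserting that the collection $\{(\tS_n(M'),\tilde d_\fp) \colon M' \preceq M\}$ is a sufficient family of quotients of $(\tS_n(M),\tilde d_\fp)$; by the preceding Lemma these are moreover precise quotients, so $(\tS_n(M),\tilde d_\fp)$ has enough precise quotients. The sub-family obtained by restricting to separable $M' \preceq M$ is itself sufficient (by downward L\"owenheim--Skolem, each countable $\fB \subseteq C(\tS_n(M),[0,1])$ involves at most countably many parameters of $M$, which sit inside some separable $M' \preceq M$). Each such $(\tS_n(M'),\tilde d_\fp)$ is compact with $\wt(\tS_n(M')) \leq \aleph_0$ since $T$ and $M'$ are both countable. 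By hypothesis (3) we have $\|(\tS_n(M'),\tilde d_\fp)\| \leq \aleph_0$ for every such $M'$, so Proposition~\ref{prp:CtblWeightDenseChar} forces each of these quotients to be CB-analysable. Applying Theorem~\ref{thm:CBQuot} (the criterion ``(iv) $\Rightarrow$ (i)'') to the sufficient family of precise quotients, we conclude that $(\tS_n(M),\tilde d_\fp)$ is CB-analysable.

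Finally, (4) $\Rightarrow$ (1) is a direct application of Proposition~\ref{prp:WeightDenseChar}. Fix $\lambda \geq \aleph_0$ and $A$ with $|A| \leq \lambda$; embedding $A$ into a model $M$ of density character $\lambda$ (via L\"owenheim--Skolem, using that $T$ is countable) and observing that $(\tS_n(A),\tilde d_\fp)$ is a precise quotient of $(\tS_n(M),\tilde d_\fp)$, by Lemma~\ref{lem:CBImage} CB-analysability passes from the latter to the former. Since $\wt(\tS_n(M)) \leq |\cL| + \|M\| = \lambda$, Proposition~\ref{prp:WeightDenseChar} yields $\|(\tS_n(A),\tilde d_\fp)\| \leq \wt(\tS_n(A),\tilde d_\fp) \leq \lambda$, which is exactly $\fp$-$\lambda$-stability.

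The only real point where one has to be careful is in (3) $\Rightarrow$ (4): one must verify that the family of type spaces over separable elementary submodels really does form a sufficient family in the sense of Definition~\ref{dfn:EnoghQuot}, which comes down to the observation that any countable collection of continuous functions on $\tS_n(M)$ can be pulled back from $\tS_n(M')$ for some separable $M' \preceq M$ (each such function being determined by countably many parameters). Everything else is either a direct appeal to an earlier result or a formality.
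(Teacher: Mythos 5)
Your proof is correct and follows essentially the same route as the paper: the paper's proof of (iii) $\Rightarrow$ (iv) is simply a citation of Corollary~\ref{cor:CBQuotWeight} applied to the sufficient family of precise quotients $\{(\tS_n(M'),\tilde d_\fp)\colon M'\preceq M\}$, which is exactly the argument you unpack, and its proof of (iv) $\Rightarrow$ (i) is ``by L\"owenheim--Skolem and Proposition~\ref{prp:WeightDenseChar}'', matching your final paragraph. The extra care you take in checking that separable elementary submodels suffice for countable families of continuous functions is a detail the paper leaves implicit, but it is the right verification.
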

\begin{proof}
  \begin{cycprf}
  \item[\impnext] Immediate.
  \item[\impnext]
    If $M$ is separable and $A \subseteq M$ is countable and dense, then
    $\tS_n(A) = \tS_n(M)$.
  \item[\impnext] By \fref{cor:CBQuotWeight}.
  \item[\impfirst] By L\"owenheim-Skolem and
    \fref{prp:WeightDenseChar}.
  \end{cycprf}
\end{proof}

\begin{cor}
  Let $T$ be a complete countable theory, $\bar M$ a monster model
  for $T$.
  For a non-empty type-definable set $X \subseteq \bar M^n$ and $\varepsilon > 0$,
  define
  the \emph{$\varepsilon$-Morley rank of $X$ up to $\fp$} by:
  $\RM_{\fp,\varepsilon}(X) = \CB^{(\tS_n(\bar M),\tilde d_\fp)}_{f,\varepsilon}([X])$.
  Then $T$ is $\fp$-$\aleph_0$-stable if and only if
  $\RM_{\fp,\varepsilon}(X)$ is an ordinal for every type-definable set $X$ and
  every $\varepsilon > 0$.
\end{cor}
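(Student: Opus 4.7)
The plan is to derive this corollary directly from the preceding theorem, which characterises $\fp$-$\aleph_0$-stability as CB-analysability of $(\tS_n(M),\tilde d_\fp)$ for every model $M$. By definition $\RM_{\fp,\varepsilon}(X) = \CB^{\tS_n(\bar M)}_{f,\varepsilon}([X])$, and $[X]$ is a closed (hence compact) subset of $\tS_n(\bar M)$ for every type-definable $X$, so both directions should reduce to the relationship between the CB-rank of $[X]$ and that of the ambient space $\tS_n(\bar M)$.

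For the forward implication I will invoke the preceding theorem applied to $M = \bar M$ to conclude that $\tS_n(\bar M)^{(\infty)}_{f,\varepsilon} = \emptyset$ for every $\varepsilon > 0$, i.e.\ that $\CB^{\tS_n(\bar M)}_{f,\varepsilon}(\tS_n(\bar M))$ is an ordinal. Since $[X] \subseteq \tS_n(\bar M)$, the definition of CB-rank then gives immediately $\RM_{\fp,\varepsilon}(X) \leq \CB^{\tS_n(\bar M)}_{f,\varepsilon}(\tS_n(\bar M)) < \infty$.

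For the converse I will apply the hypothesis to the type-definable set $X = \bar M^n$ itself, described by the tautological condition $d(\bar x,\bar x) = 0$, so that $[X] = \tS_n(\bar M)$. Then $\CB^{\tS_n(\bar M)}_{f,\varepsilon}(\tS_n(\bar M)) = \RM_{\fp,\varepsilon}(\bar M^n) < \infty$ for every $\varepsilon > 0$, which is precisely CB-analysability of $(\tS_n(\bar M),\tilde d_\fp)$. The restriction map $\tS_n(\bar M) \to \tS_n(M)$ is a surjective morphism of topometric spaces for every model $M \preceq \bar M$, so \fref{lem:CBImage} propagates CB-analysability to every $\tS_n(M)$, and the preceding theorem then yields $\fp$-$\aleph_0$-stability.

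I do not foresee any substantive obstacle: the corollary is essentially a translation of the preceding theorem into the language of Morley ranks, the only small observation being that the CB-rank of the full type space is itself realised as the Morley rank of the trivially type-definable set $\bar M^n$.
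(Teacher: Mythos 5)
Your proof is correct and is exactly the argument the paper intends: the corollary is stated without proof as an immediate consequence of the preceding theorem, and your two directions (CB-analysability of $\tS_n(\bar M)$ bounds the rank of any $[X]$; conversely $X=\bar M^n$ recovers CB-analysability of the full space, which passes to every $\tS_n(M)$ via the precise restriction map and \fref{lem:CBImage}) are precisely the missing details.
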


In particular:
\begin{cor}
  A theory $T$ is $\aleph_0$-stable if and only if
  $\RM_\varepsilon(X)$ is an ordinal for every type-definable set $X$ and
  every $\varepsilon > 0$.
\end{cor}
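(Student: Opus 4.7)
The plan is to derive the final corollary as an immediate specialisation of the preceding one by taking $\fp = \id$ to be the trivial perturbation system on $T$. Under this choice, two identifications should fall out: first, that $\tilde d_{\id}$ coincides with the standard metric $d$ on each type space $\tS_n(\bar M)$, so the $\varepsilon$-Morley rank $\RM_{\id,\varepsilon}(X)$ constructed in the previous corollary collapses to the $\varepsilon$-Morley rank $\RM_\varepsilon(X)$ defined after \fref{thm:HomogEqualRank}; and second, that $\id$-$\aleph_0$-stability coincides with ordinary $\aleph_0$-stability.

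For the first identification, I would recall that $\tilde d_\fp$ was built by combining $d_\fp$ with the standard moving of realisations. When $\fp = \id$ there is no perturbation at all to allow: the only elements of $\Pert_{\id(r)}(M,N)$ are isomorphisms (by strict reflexivity in \fref{thm:PertChar}), so $d_{\id} = 0$ on equal types and $\infty$ otherwise, and the infimum defining $\tilde d_\fp$ reduces exactly to the standard metric on types. Thus $\CB^{(\tS_n(\bar M),\tilde d_{\id})}_{f,\varepsilon} = \CB^{(\tS_n(\bar M),d)}_{f,\varepsilon}$, which is precisely $\RM_\varepsilon$.

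For the second, $\id$-$\lambda$-stability with respect to $\tilde d_{\id} = d$ is the standard notion of $\lambda$-stability of $T$: the density character of $(\tS_n(A),d)$ is bounded by $\lambda$ whenever $|A|\leq \lambda$. Hence $\id$-$\aleph_0$-stability and $\aleph_0$-stability are the same property.

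Once these two matches are in place, the final corollary is just the statement of the previous corollary read with $\fp = \id$; there is no further work. The only mild obstacle is the bookkeeping to ensure the two notions of $\varepsilon$-Morley rank in the paper really coincide at $\fp = \id$, but this is formal given the definition of $\tilde d_\fp$ and the triviality of $\Pert_{\id(r)}$.
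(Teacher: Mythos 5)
Your proposal is correct and is exactly the paper's (implicit) argument: the corollary is obtained from the preceding one by taking $\fp = \id$, using the observation already made in \fref{sec:PertMet} that $\tilde d_{\id}$ is the standard metric $d$ on type spaces, so that $\RM_{\id,\varepsilon} = \RM_\varepsilon$ and $\id$-$\aleph_0$-stability is ordinary $\aleph_0$-stability.
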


\providecommand{\bysame}{\leavevmode\hbox to3em{\hrulefill}\thinspace}
\providecommand{\MR}{\relax\ifhmode\unskip\space\fi MR }
\providecommand{\MRhref}[2]{%
  \href{http://www.ams.org/mathscinet-getitem?mr=#1}{#2}
}
\providecommand{\href}[2]{#2}

\end{document}